\documentclass{amsart}

\usepackage{amssymb}
\usepackage{graphicx}
\usepackage{enumerate}
\usepackage{amscd}
\usepackage{ascmac}
\usepackage[all]{xy}

\title{Modification rule of monodromies in $R_2$-move}
\author{Kenta Hayano}
\address{Department of Mathematics, Graduate School of Science, Osaka University, Toyonaka, Osaka 560-0043, Japan}
\email{k-hayano@cr.math.sci.osaka-u.ac.jp}

\theoremstyle{plain}
\newtheorem{thm}{Theorem}[section]

\newtheorem{lem}[thm]{Lemma}

\theoremstyle{definition}
\newtheorem{defn}[thm]{Definition}
\newtheorem{exmp}[thm]{Example}

\newtheorem{rem}[thm]{Remark}

\makeatletter

\makeatother


\setlength{\oddsidemargin}{9pt}
\setlength{\evensidemargin}{9pt}
\setlength{\textwidth}{435pt}

\def\Ker{\operatorname{Ker}}

\def\Int{\operatorname{Int}}
\def\Diff{\operatorname{Diff}}
\def\res{\operatorname{res}}

\def\MCG{\operatorname{MCG}}

\begin{document}

\maketitle

\begin{abstract}

An $R_2$-move is a homotopy of wrinkled fibrations which deforms images of indefinite fold singularities like Reidemeister move of type II. 
Variants of this move are contained in several important deformations of wrinkled fibrations, flip and slip for example. 
In this paper, we first investigate how monodromies are changed by this move. 
For a given fibration and its vanishing cycles, we then give an algorithm to obtain vanishing cycles in one reference fiber of a fibration, which is obtained by applying flip and slip to the original fibration, in terms of mapping class groups. 
As an application of this algorithm, we give several examples of diagrams which were introduced by Williams \cite{Wil2} to describe smooth $4$-manifolds by simple closed curves of closed surfaces.

\end{abstract}

\section{Introduction}

Over the last few years, several new fibrations on $4$-manifolds were introduced and studied by means of various tools: singularity theory, mapping class groups, and so on. 
These studies started from the work of Auroux, Donaldson and Katzarkov \cite{ADK} in which they generalized the results of Donaldson \cite{Donaldson} and Gompf \cite{Gompf} on relation between symplectic manifolds and Lefschetz fibrations to these on relation between near-symplectic $4$-manifolds and corresponding fibrations, called {\it broken Lefschetz fibrations}. 
After their study, Perutz \cite{Perutz1}, \cite{Perutz2} defined the Lagrangian matching invariant for near-symplectic $4$-manifolds as a generalization of standard surface count of Donaldson and Smith \cite{Donaldson_Smith} for symplectic $4$-manifolds by using broken Lefschetz fibrations. 
Although this invariant is a strong candidate for geometric interpretation of the Seiberg-Witten invariant, even smooth invariance of this invariant is not verified so far. 
To prove this, we need to understand deformation (in the space of more general fibrations) between two broken Lefschetz fibrations. 
There are several results on this matter (see \cite{Lek}, \cite{Wil}, \cite{Gay_Kirby}, \cite{Gay_Kirby2011} and \cite{Wil2}, for example). 

On the other hand, broken Lefschetz fibrations themselves have been studied in terms of mapping class groups by looking at vanishing cycles. 
For example, classification problem of fibrations with particular properties were solved by means of this combinatorial method (see \cite{BK}, \cite{H} and \cite{H2}). 
It is known that every closed oriented $4$-manifold admits broken Lefschetz fibration (this kind of result first appeared in \cite{Gay_Kirby2007}, and then improved in \cite{AK}, \cite{Ba2} and \cite{Lek}). 
It is therefore natural to expect that broken Lefschetz fibrations enable us to deal with broader range of $4$-manifolds in combinatorial way, as we dealt with symplectic $4$-manifolds using Lefschetz fibrations. 
For the purpose of developing topology of smooth $4$-manifolds by means of mapping class groups, it is necessary to understand relation between several deformations appeared in study in the previous paragraph and vanishing cycles of fibrations. 

In this paper, we will pay our attention to a specific deformation of fibrations, called an {\it $R_2$-move}. 
In this move, the image of indefinite fold singularities are changed like Reidemeister move of type II (we will define this move in Section \ref{sec_BLFoverannulus}. See Figure \ref{changesingularloci}). 
In particular, the region with the highest genus fibers was cut off in this deformation. 
Furthermore, monodromies in this region might be changed by this move. 
This move appear in a lot of important deformations of fibrations. 
For example, {\it flip and slip}, which was first introduced by Baykur \cite{Ba2}, is application of flip twice followed by a variant of $R_2$-move. 
Another variant of $R_2$-move played a key role in the work of Williams \cite{Wil2}, which gave a purely combinatorial description of $4$-manifolds (which we will mention in Section \ref{sec_exampleWilliamsdiagram}). 

The main purpose of this paper is to understand how monodromies are changed by $R_2$-move. 
We will prove that modifications of monodromies in $R_2$-move can be controlled by an intersection of kernels of some homomorphisms (see Theorem \ref{keythm_monodromyalonggamma}). 
We will also give an algorithm to obtain vanishing cycles in a reference fiber of a fibration obtained by flip and slip in terms of mapping class group (see Theorem \ref{mainalgorithm}, \ref{mainalgorithmwithcusps}, \ref{mainalgorithmwithsection}, \ref{mainalgorithmwithcuspswithsection}, \ref{mainalgorithm_SPWF}, and \ref{mainalgorithm_SPWFwithsection}). 
Note that it is {\it not} easy to determine vanishing cycles in {\it one} reference fiber of the fibration obtained by applying flip and slip. 
Indeed, in this modification, two regions with the highest genus fibers are connected by a variant of $R_2$-move. 
It is easy to obtain vanishing cycles in fibers in the respective components since flip is a local deformation. 
However, we need to deal with a certain monodromy derived from a variant of $R_2$-move to understand how these fibers are identified (see also Remark \ref{rem_nontriviality}).  

In Section \ref{sec_preliminaries}, we will give several definitions and notations which we will use in this paper.
Sections \ref{sec_BLFoverannulus}, \ref{sec_mainalgorithm} and \ref{sec_algorithm_smallgenera} are the main parts of this paper. 
In Section \ref{sec_BLFoverannulus}, we will examine how monodromies are changed in $R_2$-moves. 
The results obtained in this section will play a key role in the following sections. 
In Sections \ref{sec_mainalgorithm} and \ref{sec_algorithm_smallgenera}, we will give an algorithm to obtain vanishing cycles of a fibration modified by flip and slip. 
We will first deal with fibrations with large fiber genera in Section \ref{sec_mainalgorithm}, and then turn our attention to fibrations with small fiber genera in Section \ref{sec_algorithm_smallgenera}. 
In Section \ref{sec_exampleWilliamsdiagram}, we will give a modification rule of a diagram Williams introduced, which will be called a {\it Williams diagram} in this paper, when the corresponding fibration is changed by flip and slip. 
We will then construct Williams diagrams of some fundamental $4$-manifolds, $S^4$, $S^1\times S^3$, $\mathbb{CP}^2\#\overline{\mathbb{CP}^2}$, and so on. 
Note that, as far as the author knows, these are the first non-trivial examples of Williams diagrams.

\vspace{.8em}

\noindent
{\bf Acknowledgments. }
The author would like to express his gratitude to Jonathan Williams for helpful discussions on Williams diagrams. 
The author is supported by Yoshida Scholarship 'Master 21' and he is grateful to Yoshida Scholarship Foundation for their support. 

\section{Preliminaries}\label{sec_preliminaries}

\subsection{Wrinkled fibrations}

We first define several singularities to which we will pay attention in this paper. 

\begin{defn}

Let $M$ and $B$ be smooth manifolds of dimension $4$ and $2$, respectively. 
For a smooth map $f:M\rightarrow B$, we denote by $\mathcal{S}_f\subset M$ the set of singularities of $f$. 

\begin{enumerate}

\item $p\in \mathcal{S}_f$ is called an {\it indefinite fold singularity} of $f$ if there exists a real coordinate $(t,x,y,z)$ (resp. $(s,w)$) around $p$ (resp. $f(p)$) such that $f$ is locally written by this coordinate as follows: 
\[
f: (t,x,y,z)\mapsto (s,w)=(t, x^2+y^2-z^2). 
\]

\item $p\in \mathcal{S}_f$ is called an {\it indefinite cusp singularity} of $f$ if there exists a real coordinate $(t,x,y,z)$ (resp. $(s,w)$) around $p$ (resp. $f(p)$) such that $f$ is locally written by this coordinate as follows: 
\[
f: (t,x,y,z)\mapsto (s,w)=(t, x^3-3tx+y^2-z^2). 
\]

\item We further assume that the manifolds $M$ and $B$ are oriented. 
$p\in \mathcal{S}_f$ is called a {\it Lefschetz singularity} of $f$ if there exists a complex coordinate $(z,w)$ (resp. $\xi$) around $p$ (resp. $f(p)$) compatible with orientation of the manifold $M$ (resp. $B$) such that $f$ is locally written by this coordinate as follows: 
\[
f: (z,w)\mapsto \xi = zw. 
\]

\end{enumerate}

\end{defn}

We can also define a definite fold singularities and definite cusp singularities. 
However, these singularities will not appear in this paper. 
We call an indefinite fold (resp. cusp) singularity a {\it fold} (resp. {\it cusp}) for simplicity. 

\begin{defn}

Let $M$ and $B$ be oriented, compact, smooth manifolds of dimension $4$ and $2$, respectively. 
A smooth map $f:M\rightarrow B$ is called a {\it wrinkled fibration} if it satisfies the following conditions: 

\begin{enumerate}

\item $f^{-1}(\partial B)=\partial M$, 

\item the set of singularities $\mathcal{S}_f$ consists of folds, cusps, and Lefschetz singularities, 

\end{enumerate}

A wrinkled fibration $f$ is called a {\it purely wrinkled fibration} if $f$ has no Lefschetz singularities. 

\end{defn}

\subsection{Mapping class groups and a homomorphism $\Phi_c$}

Let $\Sigma_g$ be a closed, oriented, connected surface of genus-$g$. 
We take subsets $A_i, B_j\subset \Sigma_g$. 
We define a group $\MCG{(\Sigma_g, A_1,\ldots A_n)}(B_1,\ldots, B_m)$ as follows: 
\[
\MCG{(\Sigma_g, A_1,\ldots, A_n)}(B_1,\ldots, B_m) = \left\{[T]\in \pi_0(\Diff^+{(\Sigma_g,A_1,\ldots, A_n)}, \text{id}) \hspace{.3em} | \hspace{.3em} T(B_j)=B_j\text{ for all $j$}\right\}, 
\]
where $\Diff^+{(\Sigma_g,A_1,\ldots, A_n)}$ is defined as follows: 
\[
\Diff^+{(\Sigma_g,A_1,\ldots, A_n)}= \left\{T:\Sigma_g\rightarrow \Sigma_g\text{: diffeomorphism} \hspace{.3em} | \hspace{.3em} T|_{A_i} = \text{id}_{A_i}\text{ for all $i$}\right\}. 
\]
In this paper, we define a group structure on the above group by multiplication {\it reverse to the composition}, that is, for elements $T_1,T_2\in\Diff^+{(\Sigma_g,A_1,\ldots, A_n)}$, we define the product $T_1\cdot T_2$ as follows: 
\[
T_1\cdot T_2 = T_2\circ T_1. 
\]
We define a group structure of $\MCG{(\Sigma_g, A_1,\ldots, A_n)}(B_1,\ldots, B_m)$ in the same way. 
For simplicity, we denote by $\mathcal{M}_g$ the group $\MCG{(\Sigma_g)}$.

Let $c\subset \Sigma_g$ be a simple closed curve. 
For a given element $\psi\in \MCG{(\Sigma_g)}(c)$, we take a representative $T:\Sigma_g\rightarrow \Sigma_g\in \Diff^+{(\Sigma_g)}$ preserving the curve $c$ setwise.  
The restriction $T|_{\Sigma_g\setminus c}: \Sigma_g\setminus c\rightarrow \Sigma_g\setminus c$ is also a diffeomorphism. 
Let $S_c$ be the surface obtained by attaching two disks with marked points at the origin to $\Sigma_g\setminus c$ along $c$. 
$S_c$ is diffeomorphic to $\Sigma_{g-1}$ with two marked points if $c$ is non-separating, or $S_c$ is a disjoint union of $\Sigma_{g_1}$ with a marked point and $\Sigma_{g_2}$ with a marked point for some $g_1,g_2$ if $c$ is separating. 
The diffeomorphism $T|_{\Sigma_g\setminus c}$ can be extended to a diffeomorphism $\tilde{T}: S_c\rightarrow S_c$. 
We define an element $\Phi_c^{\ast}([T])$ as an isotopy class of $\tilde{T}$, which is contained in the group $\MCG{(S_c, \{v_1,v_2\})}$, where $v_1,v_2$ are the marked points. 
By Proposition 3.20 in \cite{Farb_Margalit}, the following map is a well-defined homomorphism: 
\[
 \Phi_c^{\ast}: \MCG{(\Sigma_{g},c)}\rightarrow \MCG{(S_c,\{v_1,v_2\})}.  
\]
Furthermore, we define a homomorphism $\Phi_c$ on $\MCG{(\Sigma_g)}(c)$ as the composition $F_{v_1,v_2}\circ \Phi_c^{\ast}$, where $F_{v_1,v_2}: \MCG{(S_c, \{v_1,v_2\})}\rightarrow \MCG{(S_c)}$ is the forgetful map. 
The range of this map is $\mathcal{M}_{g-1}$ if $c$ is non-separating, $\mathcal{M}_{g_1}\times \mathcal{M}_{g_2}$ if $c$ is separating and $g_1\neq g_2$, and $(\mathcal{M}_{g_1}\times \mathcal{M}_{g_2})\rtimes \mathbb{Z}/2\mathbb{Z}$ if $c$ is separating and $g_1= g_2$. 
Note that Baykur has already mentioned relation between such homomorphisms and monodromy representations of simplified broken Lefschetz fibrations in \cite{Ba}.

\subsection{Several homotopies of fibrations}

In this subsection, we will give a quick review of some deformations of smooth maps from $4$-manifolds to surfaces which we will use in this paper.  
For details about this, see \cite{Lek} or \cite{Wil}, for example.




\subsubsection{Sink and Unsink}

Lekili \cite{Lek} introduced a homotopy which changes a Lefschetz singularity with indefinite folds into a cusp as in Figure \ref{baselocus_sink}. 
This modification is called a {\it sink} and the inverse move is called an {\it unsink}. 
We can always change cusps into Lefschetz singularities by unsink. 
However, we can apply sink only when $c_3$ corresponds to the curve $t_{c_1}(c_2)$, where $c_i$ is a vanishing cycle determined by $\gamma_i$, which is a reference path in the base space described in Figure \ref{baselocus_sink}. 

\begin{figure}[htbp]
\begin{center}
\includegraphics[width=75mm]{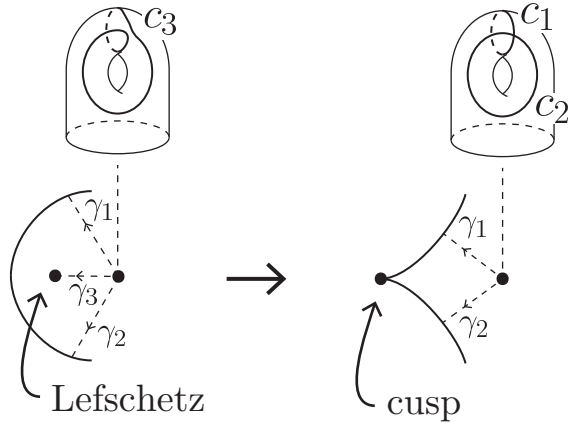}
\end{center}
\caption{Left: fibration with indefinite folds and a Lefschetz singularity. 
Right: fibration with a cusp. }
\label{baselocus_sink}
\end{figure}

\subsubsection{Flip, and "flip and slip"}

A homotopy called {\it flip} is locally written as follows: 
\[
f_s: \mathbb{R}^4\ni (t,x,y,z)\mapsto (t, x^4-x^2s+xt+y^2-z^2) \in \mathbb{R}^2. 
\]
The set of singularities $\mathcal{S}_{f_s}\subset \mathbb{R}^4$ corresponds to $\{(t,x,0,0)\in\mathbb{R}^4\hspace{.3em} | \hspace{.3em} 4x^3-2sx+t=0\}$. 
For $s<0$, this set consists of indefinite folds. 
For $s>0$, this set contains two cusps as in the right side of Figure \ref{baselocus_flip}. 

\begin{figure}[htbp]
\begin{center}
\includegraphics[width=100mm]{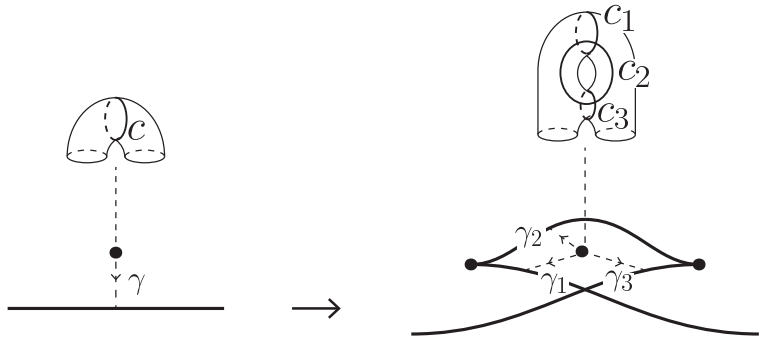}
\end{center}
\caption{Left: the image of singularities for $s<0$. Right: the image of singularities for $s>0$. 
$c_i$ describes a vanishing cycle determined by the reference path $\gamma_i$. 
As is described, $c_1$ is disjoint from $c_3$. }
\label{baselocus_flip}
\end{figure}

Baykur introduced in \cite{Ba2} a certain global homotopy, which is called a {\it flip and slip} in \cite{Ba2}, to make fibers of fibrations connected. 
This modification changes indefinite folds with circular image into circular singularities with four cusps (see Figure \ref{baselocus_flipandslip}). 
If a lower-genus regular fiber of the original fibration (i.e. a regular fiber on the inside of the singular circle of far left of Figure \ref{baselocus_flipandslip}) is disconnected, then this fiber becomes connected after the modification. 
If a lower-genus regular fiber is connected, this fiber becomes a higher genus fiber and the genus is increased by $2$.  

\begin{figure}[htbp]
\begin{center}
\includegraphics[width=120mm]{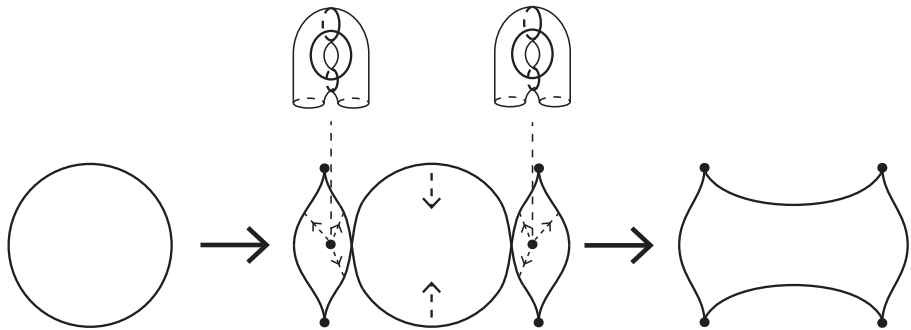}
\end{center}
\caption{The circle in far left figure describes the image of singularities of the original fibration. 
After applying flip twice, we change the fibration by a certain homotopy which makes the singular image a circle in the base space. }
\label{baselocus_flipandslip}
\end{figure}

\begin{rem}\label{rem_nontriviality}

It is {\it not} easy to obtain vanishing cycles of the fibration in {\it one} reference fiber obtained by applying flip and slip. 
Indeed, to find the vanishing cycles, we need to know how to identify two regular fibers on the regions with the highest genus fiber in the center of Figure \ref{baselocus_flipandslip}. 
As we will show in the following sections, this identification depends on the choice of homotopies, especially the choice of "slip" (from the center figure to the right figure in Figure \ref{baselocus_flipandslip}). 

\end{rem}

We remark that such a modification can be also applied when the set of singularities of the original fibration contains cusps. 
We first apply flip twice between two consecutive cusps. 
We then apply slip in the same way as in the case that the original fibration contains no cusps (see Figure \ref{flipandslipwithcusps}). 
We also call this deformation {\it flip and slip}. 

\begin{figure}[htbp]
\begin{center}
\includegraphics[width=125mm]{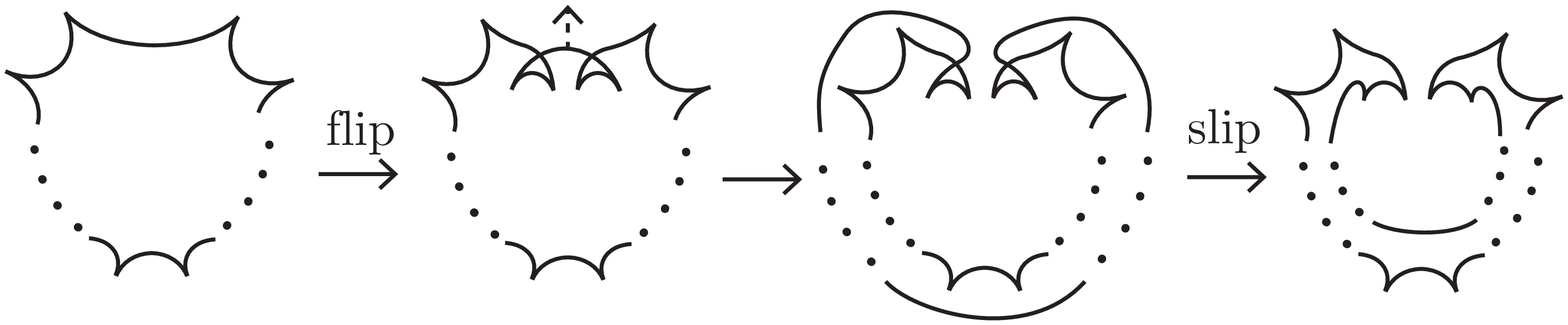}
\end{center}
\caption{base loci in flip and slip when the original fibration has cusps. }
\label{flipandslipwithcusps}
\end{figure}

\section{A fibration over the annulus with two components of indefinite folds}\label{sec_BLFoverannulus}

Let $N$ be a $3$-manifold obtained by $1$-handle attachment to $\Sigma_g\times I$ followed by $2$-handle attachment whose attaching circle is non-separating and is disjoint from the belt circle of the $1$-handle. 
$N$ has a Morse function $h:N\rightarrow I$ with two singularities; one is the center of the $1$-handle $p_1\in N$ whose index is $1$, and the other is the center of the $2$-handle $p_2\in N$ whose index is $2$. 
We assume that the value of $p_1$ under $h$ is $\frac{4}{9}$, and the value of $p_2$ under $h$ is $\frac{5}{9}$. 
We put $M=N\times S^1$ and we define $f=h\times \text{id}_{S^1}:M\rightarrow I\times S^1$. 
We denote by $Z_1\subset M$ (resp. $Z_2\subset M$) the component of indefinite folds of $f$ satisfying $f(Z_1)=\{\frac{4}{9}\}\times S^1$ (resp. $f(Z_2)=\{\frac{5}{9}\}\times S^1$). 

We identify $S^1$ with $[0,1]/\{0,1\}$. 
By construction of $N$, we can identify $f^{-1}(\{ \frac{1}{2} \} \times \{0\})$ with the closed surface $\Sigma_{g+1}$. 
Moreover, this identification is unique up to Dehn twist $t_{c}$, where $c\subset \Sigma_{g+1}$ is the belt sphere of the $1$-handle. 
We denote by $d\subset \Sigma_{g+1}$ the attaching circle of the $2$-handle. 
In this section, we look at a monodromy of the fibration $f$, especially how a monodromy along the curve $\gamma=\{\frac{1}{2}\}\times S^1$ is changed by a certain homotopy of $f$. 

We first remark that the number of connected components of the complement $\Sigma_{g+1}\setminus (c\cup d)$ is at most $2$. 
We call a pair $(c,d)$ a {\it bounding pair of genus-$g_1$} if the complement $\Sigma_{g+1}\setminus (c\cup d)$ consists of two twice punctured surfaces of genus $g_1$ and $g_2=g-g_1$.

Let $c,d \subset \Sigma_{g+1}$ be mutually disjoint non-separating simple closed curves. 
We look at details of the following homomorphisms: 
\begin{align*}
\Phi_{c} & :\MCG{(\Sigma_{g+1})(c,d)}\rightarrow \MCG{(\Sigma_g)}(d), \\
\Phi_{d} & :\MCG{(\Sigma_{g+1})(c,d)}\rightarrow \MCG{(\Sigma_g)}(c).
\end{align*}

We first consider the case that a pair $(c,d)$ is not a bounding pair. 
In this case, $c$ and $d$ are non-separating curves in $\Sigma_g$. 
As we mentioned in Section \ref{sec_preliminaries}, for a non-separating simple closed curve $c\subset \Sigma_g$, the homomorphism $\Phi_c$ is defined as $F_{v_1,v_2}\circ \Phi_c^{\ast}$. 
It is proved in \cite{Farb_Margalit} that the kernel of the homomorphism $\Phi_c^{\ast}$ is generated by the Dehn twist $t_{c}$.

Let $\MCG{(\Sigma_g)}(c^\text{ori})$ be the subgroup of $\MCG{(\Sigma_g)}(c)$ whose element is represented by a diffeomorphism preserving an orientation of $c$. 
We can define the homomorphism $\Phi_c^\text{ori} : \MCG{(\Sigma_g)}(c^\text{ori})\rightarrow \mathcal{M}_{g-1}$ as we define $\Phi_c$. 
Furthermore, we can decompose this map as follows: 
\[
\Phi_c^\text{ori} : \MCG{(\Sigma_{g})}(c^\text{ori}) \xrightarrow{\Phi_c^{\ast, \text{ori}}} \MCG{(\Sigma_{g-1}, v_1,v_2)} \xrightarrow{F_{v_1,v_2}}\mathcal{M}_{g-1}. 
\]
For $g\geq 3$, it is known that the kernel of the map $F_{v_1,v_2}:\MCG{(\Sigma_{g-1},v_1,v_2)} \rightarrow \mathcal{M}_{g-1}$ is isomorphic to the fundamental group of the configuration space $\Sigma_{g-1}\times \Sigma_{g-1}\setminus \Delta\Sigma_{g-1}$, where $\Delta\Sigma_{g-1}\subset \Sigma_{g-1}\times \Sigma_{g-1}$ is the diagonal set. 

We define the subgroups $\MCG{(\Sigma_{g+1})}(c^{\text{ori}},d)$, $\MCG{(\Sigma_{g+1})}(c,d^{\text{ori}})$ and $\MCG{(\Sigma_{g+1})}(c^{\text{ori}},d^{\text{ori}})$ of the group $\MCG{(\Sigma_{g+1})}(c,d)$ as we define the group $\MCG{(\Sigma_g)}(c^\text{ori})$. 
By the argument above, we obtain the following commutative diagram: 

\begin{equation}\label{keyCD}
\begin{xy}
{(0,0) *{\MCG{(\Sigma_{g+1})}(c^{\text{ori}},d^{\text{ori}})}},
{(20,20) *{\MCG{(\Sigma_{g},w_1,w_2)}(c^{\text{ori}})}},
{(-20,20) *{\MCG{(\Sigma_{g},v_1,v_2)}(d^{\text{ori}})}},
{(0,40) *{\MCG{(\Sigma_{g-1},v_1,v_2,w_1,w_2)}}},
{(20,-20) *{\MCG{(\Sigma_{g})}(c^{\text{ori}})}},
{(-20,-20) *{\MCG{(\Sigma_{g})}(d^{\text{ori}})}},
{(-50,-40) *{\hspace{3em}\MCG{(\Sigma_{g-1},w_1,w_2)}}},
{(50,-40) *{\MCG{(\Sigma_{g-1},v_1,v_2)}}},
{(0,-40) *{\mathcal{M}_{g-1}}},
{(8,4) \ar_{\Phi_d^{\ast,\text{ori}}} (20,16)}, 
{(-8,4) \ar^{\Phi_{c}^{\ast , \text{ori}}} (-20,16)}, 
{(8,-4) \ar^{\Phi_d^{\text{ori}}} (20,-16)}, 
{(-8,-4) \ar_{\Phi_{c}^{\text{ori}}} (-20,-16)}, 
{(20,24) \ar_{\Phi_{c}^{\ast,\text{ori}}} (8,36)}, 
{(-20,24) \ar^{\Phi_d^{\ast,\text{ori}}} (-8,36)}, 
{(18,-24) \ar_{\Phi_c^{\text{ori}}} (6,-36)}, 
{(-18,-24) \ar^{\Phi_d^{\text{ori}}} (-6,-36)}, 
{(28,-24) \ar^{\Phi_c^{\ast,\text{ori}}} (50,-36)}, 
{(-28,-24) \ar_{\Phi_d^{\ast,\text{ori}}} (-50,-36)}, 
{(-24,40) \ar_{F_{v_1,v_2}} @/_10mm/ (-60,-36)}, 
{(24,40) \ar^{F_{w_1,w_2}} @/^10mm/ (60,-36)}, 
{(-30,-40) \ar^{F_{w_1,w_2}} (-8,-40)}, 
{(30,-40) \ar_{F_{v_1,v_2}}  (8,-40)}, 
{(25,16) \ar^{F_{w_1,w_2}} @/^5mm/ (25,-16)}, 
{(-25,16) \ar_{F_{v_1,v_2}} @/_5mm/ (-25,-16)}, 
\end{xy}
\end{equation}
\vspace{.8em}

Since $c$ is disjoint from $d$, the kernel of the map $\Phi_{c}:\MCG{(\Sigma_{g+1})}(c,d)\rightarrow \MCG{(\Sigma_{g})}(d)$ is contained in the group $\MCG{(\Sigma_{g+1})}(c, d^{\text{ori}})$. 
Similarly, $\Ker{\Phi_d}$ is contined in $\MCG{(\Sigma_{g+1})}(c^{\text{ori}},d)$. 
Thus, we obtain: 
\[
\Ker{\Phi_{c}}\cap \Ker{\Phi_d} \subset \MCG{(\Sigma_{g+1})}(c^\text{ori}, d^{\text{ori}}), 
\]
and 
\[
\Ker{\Phi_{c}}\cap \Ker{\Phi_d}=\Ker{\Phi_{c}^\text{ori}}\cap \Ker{\Phi_d^\text{ori}}. 
\]
The map $\Phi_d^{\ast,\text{ori}}\circ\Phi_{c}^{\ast,\text{ori}}=\Phi_c^{\ast,\text{ori}}\circ\Phi_{d}^{\ast,\text{ori}}$ sends the group $\Ker{\Phi_{c}^\text{ori}}\cap \Ker{\Phi_d^\text{ori}}$ to the group $\Ker{F_{v_1,v_2}}\cap \Ker{F_{w_1,w_2}}\subset \MCG{(\Sigma_{g-1},v_1,v_2,w_1,w_2)}$, which is contained in the following group: 
\[
\Ker{(F_{v_1,v_2,w_1,w_2}: \MCG{(\Sigma_{g-1},v_1,v_2,w_1,w_2)}\rightarrow \mathcal{M}_{g-1})}. 
\]

\begin{lem}\label{lem_intersection1}

The following restrictions are isomorphic: 
\begin{align*}
\Phi_d^{\ast,\text{ori}}\circ\Phi_{c}^{\ast,\text{ori}} |_{\Ker{\Phi_{c}}\cap \Ker{\Phi_d}} & : \Ker{\Phi_{c}}\cap \Ker{\Phi_d}\rightarrow \Ker{F_{v_1,v_2}}\cap \Ker{F_{w_1,w_2}}, \\
\Phi_d^{\ast,\text{ori}} |_{\Ker{\Phi_{c}}\cap \Ker{\Phi_d}} & : \Ker{\Phi_{c}}\cap \Ker{\Phi_d}\rightarrow \Ker{\Phi_{c}^{\text{ori}}}\cap \Ker{F_{w_1,w_2}}, \\
\Phi_{c}^{\ast,\text{ori}} |_{\Ker{\Phi_{c}}\cap \Ker{\Phi_d}} & : \Ker{\Phi_{c}}\cap \Ker{\Phi_d}\rightarrow \Ker{F_{v_1,v_2}}\cap \Ker{\Phi_{d}^{\text{ori}}}, \\ 
\end{align*}

\end{lem}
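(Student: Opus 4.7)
The plan is to verify each of the three restrictions is an isomorphism through the standard trio of well-definedness, injectivity, and surjectivity. Well-definedness of each map (that the image lies in the stated subgroup) is a direct diagram chase in (\ref{keyCD}). For the first map, $\psi \in \Ker{\Phi_c}$ gives $F_{v_1,v_2}(\Phi_c^{\ast,\text{ori}}(\psi)) = 1$, and commutativity of the top square of (\ref{keyCD}) with $F_{v_1,v_2}$ forces the image under $\Phi_d^{\ast,\text{ori}} \circ \Phi_c^{\ast,\text{ori}}$ into $\Ker{F_{v_1,v_2}}$; the symmetric argument with $\Phi_d$ places it also in $\Ker{F_{w_1,w_2}}$. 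The analogous chases handle the other two maps.

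For injectivity of the first map, suppose $\psi \in \Ker{\Phi_c} \cap \Ker{\Phi_d}$ maps to the identity. Then $\Phi_c^{\ast,\text{ori}}(\psi) \in \Ker{\Phi_d^{\ast,\text{ori}}} = \langle t_d \rangle$, so $\Phi_c^{\ast,\text{ori}}(\psi) = t_d^k$ for some $k \in \mathbb{Z}$. Since $\psi \in \Ker{\Phi_c}$, applying $F_{v_1,v_2}$ gives $t_d^k = 1$ in $\mathcal{M}_g$; because $(c,d)$ is not a bounding pair, $d$ descends to a non-separating curve in $\Sigma_g$, so $t_d$ has infinite order and $k = 0$. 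Then $\psi \in \Ker{\Phi_c^{\ast,\text{ori}}} = \langle t_c \rangle$, and repeating the argument with $\psi \in \Ker{\Phi_d}$ yields $\psi = 1$. The injectivity of the other two restrictions follows identically, requiring only one of the two kernel identifications.

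For surjectivity of the first map, the key observation is that $\Phi_d^{\ast,\text{ori}} \circ \Phi_c^{\ast,\text{ori}}$ is the capping-off map along $c \cup d$ and is surjective: given $\phi \in \MCG{(\Sigma_{g-1}, v_1, v_2, w_1, w_2)}$, a representative extends by the identity across small disks around the four marked points and across two annular neighborhoods of $c$ and $d$ to give some $\psi \in \MCG{(\Sigma_{g+1})}(c^\text{ori}, d^\text{ori})$ with $\Phi_d^{\ast,\text{ori}} \circ \Phi_c^{\ast,\text{ori}}(\psi) = \phi$. A priori this $\psi$ need not satisfy $\psi \in \Ker{\Phi_c} \cap \Ker{\Phi_d}$, but the diagram chase using $\phi \in \Ker{F_{v_1,v_2}} \cap \Ker{F_{w_1,w_2}}$ forces $\Phi_c(\psi) \in \Ker{\Phi_d^{\ast,\text{ori}}} = \langle t_d \rangle$ and $\Phi_d(\psi) \in \Ker{\Phi_c^{\ast,\text{ori}}} = \langle t_c \rangle$, say $\Phi_c(\psi) = t_d^k$ and $\Phi_d(\psi) = t_c^l$. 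Replacing $\psi$ by $\psi \cdot t_d^{-k} \cdot t_c^{-l}$ kills both obstructions simultaneously: $t_c \in \Ker{\Phi_c^{\ast,\text{ori}}}$ and $t_d \in \Ker{\Phi_d^{\ast,\text{ori}}}$, together with the commutativity of $t_c$ and $t_d$ (since $c$ and $d$ are disjoint), ensure that the two corrections do not interfere and the image under the composed capping-off map is unchanged. Surjectivity of the other two restrictions follows from the same strategy with a single correction instead of two.

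The main obstacle will be the bookkeeping of which group each intermediate element lies in and the verification that the faces of (\ref{keyCD}) commute in the precise way needed at each step; once this is in place, the argument is driven entirely by the two identifications $\Ker{\Phi_c^{\ast,\text{ori}}} = \langle t_c \rangle$ and $\Ker{\Phi_d^{\ast,\text{ori}}} = \langle t_d \rangle$ together with the infinite order of these Dehn twists in $\mathcal{M}_g$.
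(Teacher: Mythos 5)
Your proposal is correct and follows essentially the same route as the paper: injectivity via the identification $\Ker\Phi_c^{\ast,\text{ori}}=\langle t_c\rangle$, $\Ker\Phi_d^{\ast,\text{ori}}=\langle t_d\rangle$ together with the infinite order of the induced Dehn twists, and surjectivity by lifting through the surjective capping maps and correcting the lift by $t_c^{-n}\cdot t_d^{-m}$. The only differences are elaborations (explicit well-definedness chase, the extension-by-identity justification of surjectivity of the capping maps, and the remark that $t_c$ and $t_d$ commute) of points the paper asserts or establishes in the discussion preceding the lemma.
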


\begin{proof}[Proof of Lemma \ref{lem_intersection1}]

We only prove that the first map is isomorphism (we can prove the other maps are isomorphic similarly). 
In this proof, we denote the map $\Phi_d^{\ast,\text{ori}}\circ\Phi_{c}^{\ast,\text{ori}} |_{\Ker{\Phi_{c}}\cap \Ker{\Phi_d}}$ by $\Phi$ for simplicity. 
We first prove that $\Phi$ is injective. 
We take an element $\psi\in\Ker{\Phi}$. 
Since the kernel of $\Phi_{c}^{\ast,\text{ori}}$ (resp. $\Phi_{d}^{\ast, \text{ori}}$) is generated by $t_{c}$ (resp. $t_d$), $\psi$ is equal to $t_{c}^{m}\cdot t_{d}^{n}$, for some $m,n\in\mathbb{Z}$. 
Since $\psi$ is contained in $\Ker{\Phi_{c}}$, we have $\Phi_{c}(\psi)=t_{d}^n=1$. 
Thus, we obtain $n=0$. 
Similarly, we can obtain $m=0$ and this completes the proof of injectivity of $\Phi$. 

We next prove that $\Phi$ is surjective. 
For an element $\xi\in \Ker{F_{v_1,v_2}}\cap \Ker{F_{w_1,w_2}}$, we can take an element $\overline{\xi}\in \MCG{(\Sigma_{g+1})}(c,d)$ which mapped to $\xi$ by the map $\Phi_d^{\ast,\text{ori}}\circ\Phi_{c}^{\ast,\text{ori}}$ since both of the maps $\Phi_d^{\ast,\text{ori}}$ and $\Phi_{c}^{\ast,\text{ori}}$ are surjective. 
By the commutative diagram (\ref{keyCD}), $\Phi_{d}^{\text{ori}}(\overline{\xi})$ is contained in the kernel of $\Phi_{c}^{\ast,\text{ori}}$. 
Thus, we obtain $\Phi_{d}^{\text{ori}}(\overline{\xi})={t_{c}}^n$, for some $n\in\mathbb{Z}$. 
Similarly, we obtain $\Phi_{c}^{\text{ori}}(\overline{\xi})={t_d}^m$, for some $m\in\mathbb{Z}$. 
Therefore, $\xi\cdot {t_{c}}^{-n}\cdot {t_d}^{-m}$ is contained in the group $\Ker{\Phi_{c}}\cap \Ker{\Phi_d}$ and mapped to $\xi$ by the map $\Phi$. 
This completes the proof of surjectivity of $\Phi$. 

\end{proof}

Let $\varepsilon: \Diff^+{\Sigma_{g-1}}\rightarrow {\Sigma_{g-1}}^4\setminus \tilde{\Delta}$ be the evaluation map at the points $v_1,v_2,w_1,w_2\in \Sigma_{g-1}$, where $\tilde{\Delta}$ is the subset of ${\Sigma_{g-1}}^4$ defined as follows: 
\[
\tilde{\Delta}=\{(x_1,x_2,x_3,x_4)\in{\Sigma_{g-1}}^4 \hspace{.3em} | \hspace{.3em} {}^\exists i\neq {}^\exists j \text{ s.t. }x_i=x_j\}. 
\]
Birman proved in \cite{Birman} that the map $\varepsilon$ is a locally trivial fibration with fiber $\Diff^+{(\Sigma_{g-1},v_1,v_2,w_1,w_2)}$. 
Since ${\Sigma_{g-1}}^4\setminus \tilde{\Delta}$ is connected, we obtain the following exact sequence: 
\begin{align}\label{Birmanexactsequence}
\pi_1(\Diff^+{(\Sigma_{g-1},v_1,v_2,w_1,w_2)}, \text{id})\rightarrow \pi_1(\Diff^+{\Sigma_{g-1}}, \text{id}) \xrightarrow{\varepsilon_\ast} \pi_1({\Sigma_{g-1}}^4\setminus \tilde{\Delta}, (v_1,v_2,w_1,w_2)) \\
\rightarrow \MCG{(\Sigma_{g-1}, v_1,v_2,w_1,w_2)} \rightarrow \mathcal{M}_{g-1} \rightarrow 1.  \nonumber  
\end{align}

Note that the map $\MCG{(\Sigma_{g-1}, v_1,v_2,w_1,w_2)} \rightarrow \mathcal{M}_{g-1}$ corresponds to the map $F_{v_1,v_2,w_1,w_2}$. 
Let $\Diff_0^+{\Sigma_{g-1}}$ be a connected component of $\Diff^+{\Sigma_{g-1}}$ which contains the identity map. 
The group $\Diff_0^+{\Sigma_{g-1}}$ is contractible if $g\geq 3$ (cf. \cite{Earle_Eells}). 
Thus, if $g\geq 3$, the kernel of the map $F_{v_1,v_2,w_1,w_2}$ is isomorphic to the fundamental group of the configuration space ${\Sigma_{g-1}}^4\setminus \tilde{\Delta}$.
Moreover, under the identification $\Ker{F_{v_1,v_2,w_1,w_2}}\cong \pi_1({\Sigma_{g-1}}^4\setminus \tilde{\Delta}, (v_1,v_2,w_1,w_2))$, the kernel of the map $F_{w_1,w_2}$ corresponds to the following homomorphism: 
\[
p_{1,\ast}:\pi_1({\Sigma_{g-1}}^4\setminus \tilde{\Delta},(v_1,v_2,w_1,w_2)) \rightarrow \pi_1(\Sigma_{g-1}\times \Sigma_{g-1}\setminus \Delta\Sigma_{g-1},(v_1,v_2)), 
\]
where $p_1$ is the projection onto the first and second components. 
Similarly, the kernel of the map $F_{v_1,v_2}$ corresponds to the following homomorphism: 
\[
p_{2,\ast}:\pi_1({\Sigma_{g-1}}^4\setminus \tilde{\Delta},(v_1,v_2,w_1,w_2)) \rightarrow \pi_1(\Sigma_{g-1}\times \Sigma_{g-1}\setminus \Delta\Sigma_{g-1},(w_1,w_2)), 
\]
where $p_2$ is the projection onto the third and fourth components. 
Eventually, we obtain the following isomorphism: 
\[
\Ker{F_{v_1,v_2}}\cap \Ker{F_{w_1,w_2}}\cong \Ker{p_{1,\ast}} \cap \Ker{p_{2,\ast}}. 
\]

For an oriented surface $S$ and points $x,y\in S$, we define $\Pi(S,x,y)$ as the set of embedded path from $x$ to $y$. 
For an element $\eta\in\Pi(S,x,y)$, we denote by $L(\eta):([0,1],\{0,1\})\rightarrow (S\setminus \{y\},x)$ a loop in the neighborhood of $\eta$, which is injective on $[0,1)$ and homotopic to a loop obtained by connecting $x$ to a sufficiently small counterclockwise circle around $y$ using $\eta$. 

\begin{lem}\label{lem_intersection2}

For an element $\eta\in\Pi(\Sigma_{g-1}\setminus\{v_i,w_j\},v_k,w_l)$ ($\{i,k\}=\{j,l\}=\{1,2\}$), we denote by $l(\eta)$ the following loop:
\[
[0,1]\ni t \mapsto \begin{cases}
(L(\eta)(t),v_2,w_1,w_2) & (k=1) \\
(v_1,L(\eta)(t),w_1,w_2) & (k=2) 
\end{cases}\in {\Sigma_g}^4\setminus \tilde{\Delta}. 
\]
Then, the group $\Ker{p_{1,\ast}} \cap \Ker{p_{2,\ast}}$ is generated by the following set: 
\[
\{[l(\eta)]\in\pi_1({\Sigma_{g-1}}^4\setminus \tilde{\Delta}, (v_1,v_2,w_1,w_2)) \hspace{.3em}|\hspace{.3em} \eta\in\Pi(\Sigma_{g-1}\setminus\{v_i,w_j\},v_k,w_l),  \{i,k\}=\{j,l\}=\{1,2\} \}.
\]

\end{lem}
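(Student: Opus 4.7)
The plan is to unravel $\Ker p_{1,\ast}\cap\Ker p_{2,\ast}$ through two successive applications of the Fadell--Neuwirth fibration, ultimately reducing the problem to the classical fact that the kernel of the surjection $\pi_1(\Sigma\setminus\{\ast\})\twoheadrightarrow\pi_1(\Sigma)$ is generated by loops encircling the removed point. I first use $p_2$ itself as a Fadell--Neuwirth fibration: its fiber over $(w_1,w_2)$ is the configuration space $\mathcal{C}_P:=(P\times P)\setminus\Delta P$, where $P=\Sigma_{g-1}\setminus\{w_1,w_2\}$. Since $\Sigma_{g-1}\times\Sigma_{g-1}\setminus\Delta\Sigma_{g-1}$ is aspherical for $g\geq 2$, the fiber inclusion $j_{\ast}$ is injective with image $\Ker p_{2,\ast}$, and a direct calculation shows $p_{1,\ast}\circ j_{\ast}=\iota_{\ast}$, where $\iota:\mathcal{C}_P\hookrightarrow\Sigma_{g-1}\times\Sigma_{g-1}\setminus\Delta\Sigma_{g-1}$ is the inclusion. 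Hence $\Ker p_{1,\ast}\cap\Ker p_{2,\ast}\cong \Ker\iota_{\ast}$, and under this isomorphism the type-$(k=1)$ loops $[l(\eta)]$ correspond to $[(L(\eta),v_2)]$ while the type-$(k=2)$ loops correspond to $[(v_1,L(\eta))]$ in $\pi_1(\mathcal{C}_P)$.

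Next I apply Fadell--Neuwirth again to the projection $q:\mathcal{C}_P\to P$, $(v_1,v_2)\mapsto v_1$, whose fiber over $v_1$ is $\Sigma_{g-1}\setminus\{v_1,w_1,w_2\}$. Comparing with the same construction for $\Sigma_{g-1}$ in place of $P$, the inclusion produces a morphism of fibrations and consequently a commutative diagram of short exact sequences of fundamental groups:
\[
\begin{CD}
1 @>>> \pi_1(\Sigma_{g-1}\setminus\{v_1,w_1,w_2\},v_2) @>>> \pi_1(\mathcal{C}_P) @>{q_{\ast}}>> \pi_1(P,v_1) @>>> 1 \\
@. @V{\iota_{1,\ast}}VV @V{\iota_{\ast}}VV @V{\iota_{3,\ast}}VV @. \\
1 @>>> \pi_1(\Sigma_{g-1}\setminus\{v_1\},v_2) @>>> \pi_1(\Sigma_{g-1}\times\Sigma_{g-1}\setminus\Delta\Sigma_{g-1}) @>{q_{\ast}}>> \pi_1(\Sigma_{g-1},v_1) @>>> 1.
\end{CD}
\]

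I then feed a given $\alpha\in\Ker\iota_{\ast}$ through this diagram. By commutativity, $q_{\ast}(\alpha)$ lies in $\Ker\iota_{3,\ast}$, which as a subgroup (not merely a normal subgroup) is generated by the loops $L(\eta)$ for $\eta\in\Pi(\Sigma_{g-1}\setminus\{w_j\},v_1,w_l)$ with $\{j,l\}=\{1,2\}$, since conjugation of the basic meridional loops is built into the freedom of choosing $\eta$. After perturbing each such $\eta$ to avoid the additional point $v_2$, the resulting paths realize type-$(k=1)$ loops $[l(\eta_i)]$ with $q_{\ast}[l(\eta_i)]=[L(\eta_i)]$. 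Writing $q_{\ast}(\alpha)=\prod_i [L(\eta_i)]^{\epsilon_i}$, the element $\alpha':=\alpha\cdot\prod_i[l(\eta_i)]^{-\epsilon_i}$ lies in both $\Ker q_{\ast}$ and $\Ker\iota_{\ast}$, so by the left column it comes from an element of $\Ker\iota_{1,\ast}$. This last kernel is generated as a subgroup by the loops $L(\eta')$ with $\eta'\in\Pi(\Sigma_{g-1}\setminus\{v_1,w_j\},v_2,w_l)$, and these lift to precisely the type-$(k=2)$ loops $[l(\eta')]$. Decomposing $\alpha'$ as a product completes the expression of $\alpha$ as a product of loops of the stated form.

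The main obstacle will be the bookkeeping among the three nested surface complements, and in particular the verification that both kernels $\Ker\iota_{3,\ast}$ and $\Ker\iota_{1,\ast}$ are generated as \emph{subgroups} (not merely as normal subgroups) by the $L(\eta)$'s satisfying the disjointness conditions on $\eta$ demanded in the statement. This will be handled by the observation that letting $\eta$ range freely over embedded paths already absorbs all possible conjugations of the basic meridional generators, together with the fact that disjointness from the single point $v_1$ or $v_2$ can always be arranged by a transversal perturbation of $\eta$, which does not alter the homotopy class of $L(\eta)$ in the relevant surface complement.
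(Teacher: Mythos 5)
Your reduction of $\Ker{p_{1,\ast}}\cap\Ker{p_{2,\ast}}$ to the kernel of the inclusion $\mathcal{C}_P\hookrightarrow\Sigma_{g-1}\times\Sigma_{g-1}\setminus\Delta$ is exactly the first half of the paper's proof: the same Fadell--Neuwirth fibration for $p_2$, the same use of asphericity of $\Sigma_{g-1}^2\setminus\Delta$, the same commutative triangle identifying the intersection of kernels with $\Ker{i'_\ast}$. From there the two arguments genuinely diverge. The paper stays geometric: it represents an element of $\Ker{i'_\ast}$ by an embedded loop, spans it by a ``complete immersion'' of a disk in $\Sigma_{g-1}^2\setminus\Delta$ in general position with respect to the four removed codimension-two slices $\{w_i\}\times(\cdot)$ and $(\cdot)\times\{w_j\}$ and to its own double points, and reads off the meridional factors $l'(\eta)$ from the intersection points lined up on a diameter of the disk. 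You instead fiber a second time, over the first coordinate, and convert the problem into a diagram chase between two short exact sequences, reducing everything to the classical fact that the kernel of a puncture-filling map on a surface group is generated, as a subgroup, by conjugates of meridians, each of which is realized as $[L(\eta)]$ for an embedded arc $\eta$ via point-pushing (surjectivity of $\pi_1(S\setminus\{\ast\},x)\to\pi_1(S,x)$ is what makes every homotopy class of arc embeddable while avoiding the prescribed extra points). Both routes are correct; yours trades the paper's general-position analysis for standard algebraic inputs, at the cost of needing $\pi_2$ of the aspherical bases to vanish (so $g\geq 2$, harmless since the lemma is only invoked for $g\geq 3$) and of the embedded-arc realization claim, which you correctly single out as the point requiring verification. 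Two small bookkeeping items: the correction product in $\alpha'=\alpha\cdot\prod_i[l(\eta_i)]^{-\epsilon_i}$ must be taken in reverse order for $q_\ast(\alpha')$ to actually vanish, and the perturbation of $\eta$ off $v_2$ should be noted to preserve only the class of $L(\eta)$ in $\pi_1(\Sigma_{g-1}\setminus\{w_1,w_2\})$, which is all the chase requires.
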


\begin{proof}[Proof of Lemma \ref{lem_intersection2}]

When the space $S$ is obvious, we denote by $\Delta$ the diagonal subset of $S\times S$ for simplicity. 
It is obvious that an element $[l(\eta)]$ is contained in the group $\Ker{p_{1,\ast}}\cap \Ker{p_{2,\ast}}$ for any $\eta\in\Pi(\Sigma_{g-1}\setminus \{v_i,w_j\},v_k,w_l)$. 
We prove that any element of $\Ker{p_{1,\ast}}\cap \Ker{p_{2,\ast}}$ can be represented by the product $[l(\eta_1)\cdot \cdots [l(\eta_m)]$, for some $\eta_p\in \Pi(\Sigma_{g-1}\setminus \{v_{i_p},w_{j_p}\},v_{k_p},w_{l_p})$. 
To prove this, we need the following lemma. 

\begin{lem}[Theorem 3 of Fadell-Neuwirth \cite{Fadell_Neuwirth}]\label{lem_FadellNeuwirth}

The projection 
\[
p_2: {\Sigma_{g-1}}^4\setminus \tilde{\Delta} \rightarrow {\Sigma_{g-1}}^{2}\setminus \Delta
\]
is a locally trivial fibration with fiber $(\Sigma_{g-1}\setminus \{w_1,w_2\})^2\setminus \Delta$. 

\end{lem}

By Lemma \ref{lem_FadellNeuwirth}, we obtain the following homotopy exact sequence: 
\begin{align*}
\pi_2({\Sigma_{g-1}}^{2}\setminus \Delta, (w_1,w_2)) \rightarrow \pi_1((\Sigma_{g-1}\setminus \{w_1,w_2\})^2\setminus \Delta, (v_1,v_2)) \rightarrow \pi_1({\Sigma_{g-1}}^4\setminus \tilde{\Delta}, (v_1,v_2,w_1,w_2)) \\
\xrightarrow{p_{2,\ast}} \pi_1({\Sigma_{g-1}}^{2}\setminus \Delta, (w_1,w_2)) \rightarrow \pi_0((\Sigma_{g-1}\setminus \{w_1,w_2\})^2\setminus \Delta, (v_1,v_2)).  
\end{align*}

Since the space $(\Sigma_{g-1}\setminus \{w_1,w_2\})^2\setminus \Delta$ is connected and the space ${\Sigma_{g-1}}^{2}\setminus \Delta$ is aspherical (cf. Corollary 2.2. of \cite{Fadell_Neuwirth}), the inclusion map $i:(\Sigma_{g-1}\setminus \{w_1,w_2\})^2\setminus \Delta \rightarrow {\Sigma_{g-1}}^4\setminus \tilde{\Delta}$ gives the following isomorphism: 
\[
i_{\ast}: \pi_1((\Sigma_{g-1}\setminus \{w_1,w_2\})^2\setminus \Delta, (v_1,v_2)) \rightarrow \Ker{p_{2,\ast}}. 
\] 
Let $i^\prime :(\Sigma_{g-1}\setminus \{w_1,w_2\})^2\setminus \Delta \rightarrow {\Sigma_{g-1}}^{2}\setminus \Delta$ be the inclusion map. 
The group $\Ker{p_{1,\ast}}\cap \Ker{p_{2,\ast}}$ is isomorphic to the group $\Ker{i^\prime_{\ast}}$ since the following diagram commutes: 

\begin{center}
\begin{minipage}[c]{100mm}
\begin{xy}
{(0,0) *{(\Sigma_{g-1}\setminus \{w_1,w_2\})^2\setminus \Delta}}, 
{(40,0) *{{\Sigma_{g-1}}^4\setminus \tilde{\Delta}}}, 
{(0,-15) *{{\Sigma_{g-1}}^{2}\setminus \Delta}}, 
{(20,0) \ar ^{i} (30,0)}, 
{(0,-4), \ar _{i^\prime} (0,-11)}, 
{(32,-4) \ar ^{p_1} (12, -12)}
\end{xy}
\end{minipage}
\end{center}

\noindent
Thus, it is sufficient to prove that any element of $\Ker{i^\prime_{\ast}}$ can be represented by the product $[l^\prime(\eta_1)\cdot \cdots \cdot  [l^\prime(\eta_m)]$ for some $\eta_p\in\Pi(\Sigma_{g-1}\setminus \{v_{i_p},w_{j_p}\},v_{k_p},w_{l_p})$, where $l^\prime(\eta_p)$ is the loop defined as follows: 
\[
[0,1]\ni t \mapsto \begin{cases}
(L(\eta_p)(t),v_2) & (k_p=1) \\
(v_1,L(\eta_p)(t)) & (k_p=2) 
\end{cases}\in (\Sigma_{g-1}\setminus \{w_1,w_2\})^2\setminus \Delta. 
\]

We take an element $[\xi]\in \Ker{p_{1,\ast}}$, where $\xi:(S^1,1)\rightarrow ((\Sigma_{g-1}\setminus\{w_1,w_2\})^2\setminus\Delta, (v_1,v_2))$ is a loop ($1\in S^1\subset \mathbb{C}$). 
We can assume that $\xi$ is an embedding. 
Since $\xi$ is null-homotopic in the space ${\Sigma_{g-1}}^2\setminus \Delta$, we can take a map $\overline{\xi}: D^2 \rightarrow {\Sigma_{g-1}}^2\setminus \Delta$ satisfying the following conditions: 

\begin{enumerate}[(a)]

\item the restriction $\res{\overline{\xi}}: S^1=\partial D^2 \rightarrow {\Sigma_{g-1}}^2\setminus \Delta$ corresponds to $\xi$, 

\item $\overline{\xi}$ is a complete immersion, that is, $\overline{\xi}$ satisfies: 

\begin{itemize}

\item $\overline{\xi}$ is an immersion, 

\item $\sharp \overline{\xi}^{-1}(p)$ is at most $2$ for each $p\in \overline{\xi}(D^2)$, 

\item for any point $p\in \overline{\xi}(D^2)$ such that $\sharp \overline{\xi}^{-1}(p)=2$, there exists a disk neighborhood $D_i\subset {\Sigma_{g-1}}^2\setminus \Delta$ of a point $p_i\in \overline{\xi}^{-1}(p)$ such that $\overline{\xi}$ is an embedding over $D_i$, and that $\overline{\xi}(D_1)$ intersects $\overline{\xi}(D_2)$ at the unique point $p$ transversely, where $\{p_1,p_2\}=\overline{\xi}^{-1}(p)$, 

\end{itemize}

\item for each $i\in\{1,2\}$, $\overline{\xi}^{-1}\Bigl(\{w_i\}\times (\Sigma_{g-1}\setminus \{w_i\})\Bigr)$ and $\overline{\xi}^{-1}\Bigl((\Sigma_{g-1}\setminus \{w_i\})\times \{w_i\}\Bigr)$ is a discrete set and is contained in $\Int{D^2} \cap \mathbb{R}$, 

\item the set $\overline{\xi}^{-1}\Bigl(\{p\in{\Sigma_{g-1}}^2\setminus \Delta \hspace{.3em} | \hspace{.3em} \sharp \overline{\xi}(p)=2\}\Bigr)$ is contained in $\Int{D^2} \cap \mathbb{R}$, 

\item $\overline{\xi}(D^2)$ does not contain the point $(w_1,w_2)$ and $(w_2,w_1)$.  

\end{enumerate}

We define a discrete set $B\subset \Int{D^2}\cap \mathbb{R}$ as follows:  
\[
B=\coprod_{i=1}^{2}\overline{\xi}^{-1}\Bigl(\{w_i\}\times (\Sigma_{g-1}\setminus \{w_i\}\Bigr) \coprod_{j=1}^{2} \overline{\xi}^{-1}\Bigl(\Sigma_{g-1}\setminus \{w_j\})\times \{w_j\}\Bigr) \cup \overline{\xi}^{-1}\Bigl(\{p\in{\Sigma_{g-1}}^2\setminus \Delta \hspace{.3em} | \hspace{.3em} \sharp \overline{\xi}(p)=2\}\Bigr). 
\]
We put $B=\{q_1,\ldots,q_n\} \subset D^2\cap \mathbb{R}$.
We assume that $q_1< \cdots < q_n$. 
Denote by $S_i$ the upper semicircle centered at $\frac{1+q_i}{2}$ whose ends are $1$ and $q_i$. 
We also denote by $\zeta_i$ a loop obtained by connecting a small counterclockwise circle around $q_i$ to the point $1\in S^1$ using $S_i$. 
Since $\overline{\xi}$ is an embedding over $S_i$, the image $\overline{\xi}(S_i)$ is an embedded path, which we denote by $(\eta_1(S_i),\eta_2(S_i))\subset {\Sigma_{g-1}}^2\setminus \Delta$. 
The loop $\overline{\xi}(\zeta_i)$ is homotopic to one of the following loops: 
\begin{equation*}
\overline{\xi}(\zeta_i) \simeq \begin{cases}
l^\prime(\eta_1(S_i)) & \text{(if $\overline{\xi}(q_i)$ is contained in $\{w_i\}\times (\Sigma_{g-1}\setminus \{w_i\}$)}, \\
l^\prime(\eta_2(S_i)) & \text{(if $\overline{\xi}(q_i)$ is contained in $(\Sigma_{g-1}\setminus \{w_j\})\times \{w_j\}$)}, \\
\text{trivial loop} & \text{(otherwise)}. 
\end{cases}
\end{equation*}
The loop $\xi$ is homotopic to the loop $\overline{\xi}|_{\zeta_1\cdot \cdots \cdot \zeta_n}$, and this completes the proof of Lemma \ref{lem_intersection2}.  

\end{proof}

We eventually obtain the following theorem. 

\begin{thm}\label{keythm_intersection}

For an element $\eta\in\Pi(\Sigma_{g-1}\setminus \{v_i,w_j\}, v_k,w_l)$ ($\{i,k\}=\{j,l\}=\{1,2\}$), we denote by $\delta(\eta)\subset \Sigma_{g-1}$ the boundary of a regular neighborhood of $\eta$. 
This is a simple closed curve in $\Sigma_{g-1}\setminus \{v_1,v_2,w_1,w_2\}$ and we can take a lift of this curve to $\tilde{\delta}(\eta)\subset \Sigma_{g+1}\setminus (c\cup d)$ by using the identification $\Sigma_{g-1}\setminus \{v_1,v_2,w_1,w_2\}\cong \Sigma_{g+1}\setminus (c\cup d)$. 
If $g$ is greater than $2$, then the group $\Ker{\Phi_{c}}\cap \Ker{\Phi_d}$ is generated by the following set: 
\[
\{t_{\tilde{\delta}(\eta)}\cdot t_{c}^{-1}\cdot t_{d}^{-1}\in \MCG{(\Sigma_{g+1})}(c,d) \hspace{.3em} | \hspace{.3em} \eta\in\Pi(\Sigma_{g-1}\setminus \{v_i,w_j\}, v_k,w_l), \{i,k\}=\{j,l\}=\{1,2\}\}. 
\]

\end{thm}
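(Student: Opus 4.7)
The plan is to combine Lemma \ref{lem_intersection1} (the isomorphism $\Phi_d^{\ast,\text{ori}}\circ\Phi_c^{\ast,\text{ori}}:\Ker{\Phi_c}\cap \Ker{\Phi_d}\xrightarrow{\sim} \Ker{F_{v_1,v_2}}\cap \Ker{F_{w_1,w_2}}$) with the Birman identification $\Ker{F_{v_1,v_2,w_1,w_2}}\cong \pi_1({\Sigma_{g-1}}^4\setminus\tilde{\Delta})$ (valid for $g\geq 3$) and Lemma \ref{lem_intersection2} (which lists generators $[l(\eta)]$ of the target subgroup). The theorem then reduces to showing, for each generator $[l(\eta)]$, that the element $t_{\tilde{\delta}(\eta)}\cdot t_c^{-1}\cdot t_d^{-1}$ lies in $\Ker{\Phi_c}\cap \Ker{\Phi_d}$ and is sent to $[l(\eta)]$ by the isomorphism.

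First I would verify the kernel membership. The key geometric claim is that the curve $\tilde{\delta}(\eta)$ is isotopic to $d$ in $S_c=\Sigma_g$. Concretely, the subsurface of $\Sigma_g$ built from the pair of pants bounded by $\delta(\eta)$ and the two small boundary circles near $v_k,w_l$, together with the cap-disk at $v_k$ and the half of the $d$-tube from the $w_l$-boundary to the core curve $d$, has Euler characteristic $-1+1+0=0$ and has $\tilde{\delta}(\eta)\cup d$ as boundary, hence is an annulus. Therefore $\Phi_c(t_{\tilde{\delta}(\eta)})=t_d$. Combined with $\Phi_c(t_c)=1$ (because the image of $c$ bounds a disk containing only the single marked point $v_1$, and such a twist is trivial in $\MCG{(\Sigma_g,\{v_1,v_2\})}$) and $\Phi_c(t_d)=t_d$, this yields $\Phi_c(t_{\tilde{\delta}(\eta)}\cdot t_c^{-1}\cdot t_d^{-1})=1$. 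The symmetric argument gives $\Phi_d(t_{\tilde{\delta}(\eta)}\cdot t_c^{-1}\cdot t_d^{-1})=1$.

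Next I would identify the image. Capping both $c$ and $d$ sends $\tilde{\delta}(\eta)$ to $\delta(\eta)\subset\Sigma_{g-1}$ while killing $t_c$ and $t_d$ for the same reason as above, so $(\Phi_d^{\ast,\text{ori}}\circ\Phi_c^{\ast,\text{ori}})(t_{\tilde{\delta}(\eta)}\cdot t_c^{-1}\cdot t_d^{-1})=t_{\delta(\eta)}$ in $\MCG{(\Sigma_{g-1},v_1,v_2,w_1,w_2)}$. On the other hand, the Birman isomorphism sends $[l(\eta)]$ to the point-pushing map of $v_k$ along the lasso $L(\eta)$; by the classical point-pushing formula this equals $t_{\delta(\eta)}\cdot t_{\delta_{w_l}}^{-1}$, where $\delta_{w_l}$ is a small loop encircling $w_l$, and $t_{\delta_{w_l}}$ is trivial in this marked-point mapping class group. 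Hence both sides equal $t_{\delta(\eta)}$, establishing the second claim.

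The step I expect to be most delicate is the geometric identification of the cobounding annulus between $\tilde{\delta}(\eta)$ and $d$ in $\Sigma_g$: one must track the pair of pants, the $v_k$-cap, and the half of the $d$-tube carefully (using crucially that $v_k,w_l$, and not $v_i,w_j$, are the endpoints of $\eta$) and verify that the Euler characteristic and boundary components match. Once this geometric picture is in place, the triviality of small-loop twists around single marked points and the classical point-pushing formula finish the proof.
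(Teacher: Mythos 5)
Your proposal is correct and follows exactly the route the paper intends: combine Lemma \ref{lem_intersection1}, the Birman identification of $\Ker{F_{v_1,v_2,w_1,w_2}}$ with $\pi_1({\Sigma_{g-1}}^4\setminus\tilde{\Delta})$ for $g\geq 3$, and Lemma \ref{lem_intersection2}, then match generators. The paper states the theorem with no further argument ("we eventually obtain"), so your explicit verification that $t_{\tilde{\delta}(\eta)}\cdot t_c^{-1}\cdot t_d^{-1}$ lies in $\Ker{\Phi_c}\cap\Ker{\Phi_d}$ and maps to $[l(\eta)]$ (via the annulus between $\tilde{\delta}(\eta)$ and $d$, and the point-pushing formula, which is consistent with the computation $Push(L(\eta))=t_{\tilde{\delta}(\eta)}\cdot t_d^{-1}$ appearing later in the proof of Lemma \ref{lem_monodromyalonggamma}) simply supplies the details the paper leaves implicit.
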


We next consider the case $(c,d)$ is a bounding pair of genus $g_1$. 
Then, $c\subset \Sigma_g$ is a separating curve. 
We put $g_2=g-g_1$. 
By the same argument as in Lemma \ref{lem_intersection1}, we can prove the following lemma. 

\begin{lem}\label{lem_intersection1_separating}

The following restrictions are isomorphic: 
\begin{align*}
\Phi_d^{\ast}\circ\Phi_{\tilde{c}}^{\ast} |_{\Ker{\Phi_{c}}\cap \Ker{\Phi_d}} & : \Ker{\Phi_{c}}\cap \Ker{\Phi_d}\rightarrow \Ker{F_{v_1,v_2}}\cap \Ker{F_{w_1,w_2}}, \\
\Phi_d^{\ast} |_{\Ker{\Phi_{c}}\cap \Ker{\Phi_d}} & : \Ker{\Phi_{c}}\cap \Ker{\Phi_d}\rightarrow \Ker{\Phi_{c}}\cap \Ker{F_{w_1,w_2}}, \\
\Phi_{\tilde{c}}^{\ast} |_{\Ker{\Phi_{c}}\cap \Ker{\Phi_d}} & : \Ker{\Phi_{c}}\cap \Ker{\Phi_d}\rightarrow \Ker{F_{v_1,v_2}}\cap \Ker{\Phi_{d}}, \\ 
\end{align*}

\end{lem}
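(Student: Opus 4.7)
The plan is to imitate the proof of Lemma \ref{lem_intersection1}, adapting to the fact that $(c,d)$ is now a bounding pair. As in that proof, only the first isomorphism needs detailed treatment; the other two follow by identical arguments. The overall workflow is: construct the bounding pair analogue of the commutative diagram \ref{keyCD}, then verify injectivity and surjectivity separately.

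For the diagram setup, I would first note that cutting $\Sigma_{g+1}$ along $c$ yields $\Sigma_g$ with marked points $v_1, v_2$ in which the image $\tilde d$ of $d$ is now a separating simple closed curve whose complementary components each contain one $v_i$, and symmetrically for $\tilde c \subset S_d$. A crucial simplification is that the ``ori'' decorations of diagram \ref{keyCD} become redundant: when $g_1 \neq g_2$, the two complementary components of $c \cup d$ in $\Sigma_{g+1}$ have different genera and so cannot be swapped by any orientation preserving diffeomorphism, whence every element of $\MCG{(\Sigma_{g+1})}(c,d)$ automatically preserves the orientations of both $c$ and $d$; in the case $g_1 = g_2$ one restricts at the outset to the index two subgroup that fixes each complementary side, which is transparent to the kernel computation.

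For injectivity of the first restriction, I would take $\psi$ in its kernel and use that $\Ker \Phi_c^{\ast}$ and $\Ker \Phi_d^{\ast}$ remain cyclic, generated respectively by $t_c$ and $t_d$, since the capping off argument of Proposition 3.20 of \cite{Farb_Margalit} applies equally well when the cut curve is separating. This forces $\psi = t_c^m \cdot t_d^n$ for some $m,n \in \mathbb{Z}$. The hypothesis $\psi \in \Ker \Phi_c$ then gives $\Phi_c(\psi) = t_{\tilde d}^n = 1$, and $\psi \in \Ker \Phi_d$ gives $\Phi_d(\psi) = t_{\tilde c}^m = 1$; since Dehn twists along separating simple closed curves in closed surfaces have infinite order, we conclude $m = n = 0$. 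For surjectivity, given $\xi$ in the target, I would lift $\xi$ to some $\overline{\xi} \in \MCG{(\Sigma_{g+1})}(c,d)$ using surjectivity of $\Phi_d^{\ast} \circ \Phi_{\tilde c}^{\ast}$. Then by the commutative diagram $\Phi_d(\overline{\xi})$ lies in $\Ker \Phi_{\tilde c}^{\ast} = \langle t_{\tilde c}\rangle$ and $\Phi_c(\overline{\xi})$ lies in $\Ker \Phi_{\tilde d}^{\ast} = \langle t_{\tilde d}\rangle$, so $\Phi_d(\overline{\xi}) = t_{\tilde c}^n$ and $\Phi_c(\overline{\xi}) = t_{\tilde d}^m$ for some $m, n$, and $\overline{\xi} \cdot t_c^{-n} \cdot t_d^{-m}$ lies in $\Ker \Phi_c \cap \Ker \Phi_d$ and still maps to $\xi$.

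The main obstacle is the careful bookkeeping around the bounding pair diagram: one has to verify that the kernel characterizations $\Ker \Phi_{\tilde c}^{\ast} = \langle t_{\tilde c}\rangle$ and $\Ker \Phi_{\tilde d}^{\ast} = \langle t_{\tilde d}\rangle$ survive when $\tilde c, \tilde d$ are separating, and that the cube of mapping class groups analogous to diagram \ref{keyCD} still commutes. Once these structural facts are in place, the injectivity and surjectivity arguments are a formal diagram chase identical to that of Lemma \ref{lem_intersection1}.
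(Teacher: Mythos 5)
Your proposal is correct and takes essentially the same route as the paper: the paper's own ``proof'' is just the remark that the argument of Lemma \ref{lem_intersection1} carries over, and your adaptation (the kernels of the cutting maps remain cyclic, generated by $t_c$ and $t_d$; the same injectivity/surjectivity diagram chase with the correction term $\overline{\xi}\cdot t_c^{-n}\cdot t_d^{-m}$; and the observation that the orientation decorations are automatic or harmless for a bounding pair) is exactly that argument. The only point worth flagging is the implicit hypothesis $g_1,g_2\geq 1$, needed so that $t_{\tilde c}$ and $t_{\tilde d}$ remain infinite order after forgetting the marked points; this is satisfied wherever the lemma is used, since Theorem \ref{keythm_intersection_separating} assumes $g_1,g_2\geq 2$.
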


The group $\Ker{F_{v_1,v_2}}$ (resp. $\Ker{F_{w_1,w_2}}$) corresponds to the group $\Ker{F_{v_1}}\times \Ker{F_{v_2}}$ (resp. $\Ker{F_{w_1}}\times \Ker{F_{w_2}}$). 
Thus, we obtain: 
\[
\Ker{F_{v_1,v_2}}\cap\Ker{F_{w_1,w_2}}= (\Ker{F_{v_1}}\cap \Ker{F_{w_1}})\times (\Ker{F_{v_2}}\cap \Ker{F_{w_2}}).  
\]
Furthermore, the group $\Ker{F_{v_i}}\cap \Ker{F_{w_i}}$ is contained in the kernel of the following homomorphism:
\[
F_{v_i,w_i}:\MCG{(\Sigma_{g_i},v_i, w_i)}\rightarrow \mathcal{M}_{g_i}.
\] 
This group is isomorphic to the group $\pi_1({\Sigma_{g_i}}^2\setminus \Delta, (v_i,w_i))$ if $g_i\geq 2$. 
Under this identification, it is easy to prove that $\Ker{F_{v_i}}\cap \Ker{F_{w_i}}$ corresponds to the group $\Ker{p_{1,\ast}}\cap \Ker{p_{2,\ast}}$. 
where we denote by $p_j: {\Sigma_{g_i}}^2\setminus \Delta \rightarrow \Sigma_{g_i}$ the projection onto the $j$-th component. 
Since $p_2$ is a locally trivial fibration with fiber $\Sigma_{g_i}\setminus \{w_i\}$ (cf. \cite{Fadell_Neuwirth}), we can prove the following lemma by using Van Kampen's theorem. 

\begin{lem}\label{lem_intersection2_separating}

For an element $\eta\in\Pi(\Sigma_{g_i},v_i, w_i)$, we denote by $l(\eta)$ the following loop:
\[
[0,1]\ni t \mapsto (L(\eta)(t),w_i) \in {\Sigma_{g_i}}^2\setminus \Delta. 
\]
Then, the group $\Ker{p_{1,\ast}} \cap \Ker{p_{2,\ast}}$ is generated by the following set: 
\[
\{[l(\eta)]\in\pi_1({\Sigma_{g_i}}^2\setminus \Delta, (v_i,w_i)) \hspace{.3em}|\hspace{.3em} \eta\in\Pi(\Sigma_{g_i},v_i,w_i) \}.
\]

\end{lem}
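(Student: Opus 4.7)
The plan is to reduce the statement to Van Kampen's theorem by first trivializing the contribution of $p_{2,\ast}$ via the Fadell--Neuwirth fibration. Since $g_i\geq 2$, the surface $\Sigma_{g_i}$ is aspherical, so $\pi_2(\Sigma_{g_i})=0$ and the long exact sequence of the fibration $p_2:\Sigma_{g_i}^2\setminus\Delta\rightarrow \Sigma_{g_i}$ with fiber $\Sigma_{g_i}\setminus\{w_i\}$ collapses to a short exact sequence
\[
1 \rightarrow \pi_1(\Sigma_{g_i}\setminus\{w_i\},v_i) \xrightarrow{\iota_{\ast}} \pi_1(\Sigma_{g_i}^2\setminus\Delta,(v_i,w_i)) \xrightarrow{p_{2,\ast}} \pi_1(\Sigma_{g_i},w_i) \rightarrow 1,
\]
where $\iota$ is the inclusion of the fiber $(\Sigma_{g_i}\setminus\{w_i\})\times\{w_i\}$. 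Under $\iota_{\ast}$ a loop $L(\eta)$ based at $v_i$ is sent precisely to $l(\eta)$ in the notation of the statement, so the generators from the lemma do lie in $\iota_{\ast}\bigl(\pi_1(\Sigma_{g_i}\setminus\{w_i\},v_i)\bigr)=\Ker p_{2,\ast}$.

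Next I would observe that, under the identification $\Ker p_{2,\ast}\cong \pi_1(\Sigma_{g_i}\setminus\{w_i\},v_i)$, the restriction of $p_{1,\ast}$ is the map induced by the inclusion $j:\Sigma_{g_i}\setminus\{w_i\}\hookrightarrow \Sigma_{g_i}$. Consequently,
\[
\Ker p_{1,\ast} \cap \Ker p_{2,\ast} \;\cong\; \Ker\bigl(j_{\ast}:\pi_1(\Sigma_{g_i}\setminus\{w_i\},v_i)\rightarrow \pi_1(\Sigma_{g_i},v_i)\bigr),
\]
so it suffices to show that $\Ker j_{\ast}$ is generated by the classes $[L(\eta)]$ with $\eta\in\Pi(\Sigma_{g_i},v_i,w_i)$.

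For this, apply Van Kampen's theorem to the open cover $\Sigma_{g_i}=(\Sigma_{g_i}\setminus\{w_i\})\cup D$, where $D$ is a small open disk around $w_i$: one obtains $\pi_1(\Sigma_{g_i},v_i)$ from $\pi_1(\Sigma_{g_i}\setminus\{w_i\},v_i)$ by killing a meridian $[L(\eta_0)]$ of $w_i$, based at $v_i$ along any fixed embedded path $\eta_0$. Hence $\Ker j_{\ast}$ is the normal closure of $[L(\eta_0)]$. Every conjugate $\alpha\cdot[L(\eta_0)]\cdot\alpha^{-1}$ is itself of the form $[L(\eta)]$ for a suitable $\eta\in\Pi(\Sigma_{g_i},v_i,w_i)$, obtained by concatenating a representative of $\alpha$ with $\eta_0$ and then straightening to an embedded path by general position; conversely each $[L(\eta)]$ visibly bounds a disk in $\Sigma_{g_i}$, so lies in $\Ker j_{\ast}$.

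The only genuinely nontrivial step is the fibration identification of the kernel of $p_{2,\ast}$, which relies on $g_i\geq 2$; the rest is bookkeeping. The mildest technical nuisance is checking that arbitrary loops representing conjugating elements may be replaced by embedded paths as required by the definition of $\Pi(\Sigma_{g_i},v_i,w_i)$, but this is immediate from transversality on a surface and does not change the homotopy class of $L(\eta)$.
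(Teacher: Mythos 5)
Your first three steps --- the Fadell--Neuwirth fibration giving $\Ker{p_{2,\ast}}\cong\pi_1(\Sigma_{g_i}\setminus\{w_i\},v_i)$ via the fiber inclusion, the identification of $p_{1,\ast}|_{\Ker{p_{2,\ast}}}$ with the map induced by $\Sigma_{g_i}\setminus\{w_i\}\hookrightarrow\Sigma_{g_i}$, and Van Kampen showing that $\Ker{j_{\ast}}$ is the normal closure of a single meridian --- are correct and are exactly the reduction the paper has in mind. The gap is in the last step, which is really the whole content of the lemma: you assert that $\alpha\cdot[L(\eta_0)]\cdot\alpha^{-1}=[L(\eta)]$ for an \emph{embedded} $\eta$ obtained by ``straightening $\alpha*\eta_0$ by general position,'' and call this ``immediate from transversality.'' It is not: general position only makes an arc immersed with transverse double points, it never removes them, and the concatenation $\alpha*\eta_0$ always has at least one (it returns to $v_i$). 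Removing self-intersections requires an actual homotopy, and whether a homotopy class of arc from $v_i$ to $w_i$ (with interior avoiding $w_i$, taken modulo winding about $w_i$) admits an embedded representative is precisely the nontrivial point --- recall that the analogous statement for closed curves is false (the class $a^2$ on the torus has no embedded representative). Since $\Pi(\Sigma_{g_i},v_i,w_i)$ consists of \emph{embedded} paths by definition, and embeddedness is what makes $\delta(\eta)$ a simple closed curve in Theorem \ref{keythm_intersection_separating}, this cannot be dismissed as bookkeeping.

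The assertion is nonetheless true, and there are two honest ways to close the gap. (i) Point-pushing: represent $\alpha$ by a loop in $\Sigma_{g_i}\setminus\{w_i\}$ and let $Push(\alpha)$ be the point-pushing homeomorphism of $(\Sigma_{g_i},v_i)$ supported in a neighborhood of that loop disjoint from $w_i$; then $\eta:=Push(\alpha^{\pm1})(\eta_0)$ is embedded, being the image of an embedded arc under a homeomorphism, and the Birman exact sequence computation gives $[L(\eta)]=\alpha[L(\eta_0)]\alpha^{-1}$. (ii) The route taken in the paper's proof of Lemma \ref{lem_intersection2}: a loop in $\Ker{j_{\ast}}$ bounds a generically immersed disk in $\Sigma_{g_i}$ meeting $w_i$ in finitely many points; the meridians of those points along embedded arcs in the disk map to loops of the form $l(\eta_j)^{\pm1}$ with $\eta_j$ embedded (the disk map being an embedding on those arcs), and their product is the given element. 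Note that (ii) only shows each kernel element is a \emph{product} of embedded meridians rather than a single one, but that is all the lemma requires. With either repair your argument is complete; as written, the decisive step is unjustified.
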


As the case $(c,d)$ is not a bounding pair, we eventually obtain the following theorem. 

\begin{thm}\label{keythm_intersection_separating}

For an element $\eta\in\Pi(\Sigma_{g_i}, v_i,w_i)$, we denote by $\delta(\eta)\subset \Sigma_{g_i}$ the boundary of a regular neighborhood of $\eta$. 
This is a simple closed curve in $\Sigma_{g_i}\setminus \{v_i,w_i\}$ and we can take a lift of this curve to $\tilde{\delta}(\eta)\subset \Sigma_{g_1+g_2+1}\setminus (c\cup d)$ by using the identification $\Sigma_{g_1}\setminus \{v_1,w_1\}\amalg \Sigma_{g_2}\setminus \{v_2,w_2\}\cong \Sigma_{g_1+g_2+1}\setminus (c\cup d)$. 
If both of the numbers $g_1$ and $g_2$ are greater than or equal to $2$, then the group $\Ker{\Phi_{c}}\cap \Ker{\Phi_d}$ is generated by the following set: 
\[
\{t_{\tilde{\delta}(\eta)}\cdot t_{c}^{-1}\cdot t_{d}^{-1}\in \MCG{(\Sigma_{g+1})}(c,d) \hspace{.3em} | \hspace{.3em} \eta\in\Pi(\Sigma_{g_i}, v_i,w_i), i \in\{1,2\}\}. 
\]

\end{thm}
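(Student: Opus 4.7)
The plan is to follow the template of Theorem \ref{keythm_intersection}, with Lemmas \ref{lem_intersection1_separating} and \ref{lem_intersection2_separating} playing the roles of their non-separating analogues. By Lemma \ref{lem_intersection1_separating} the group $\Ker\Phi_c\cap\Ker\Phi_d$ is isomorphic, via the restriction of $\Phi_d^{\ast}\circ\Phi_c^{\ast}$, to $\Ker F_{v_1,v_2}\cap\Ker F_{w_1,w_2}$, so it suffices to identify a generating set of the target and exhibit preimages of the stated form.

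First I would invoke the product decomposition
\[
\Ker F_{v_1,v_2}\cap\Ker F_{w_1,w_2}=(\Ker F_{v_1}\cap\Ker F_{w_1})\times(\Ker F_{v_2}\cap\Ker F_{w_2})
\]
noted in the text, together with the identification $\Ker F_{v_i}\cap\Ker F_{w_i}\cong\Ker p_{1,\ast}\cap\Ker p_{2,\ast}$ inside $\pi_1({\Sigma_{g_i}}^2\setminus\Delta,(v_i,w_i))$; the hypothesis $g_i\ge 2$ is used here to ensure $\Diff_0^+\Sigma_{g_i}$ is contractible so that this configuration-space $\pi_1$ really does compute the kernel of the forgetful map. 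Lemma \ref{lem_intersection2_separating} then supplies a generating set $\{[l(\eta)]\}_{\eta\in\Pi(\Sigma_{g_i},v_i,w_i)}$ for each factor, reducing the problem to lifting each such generator through $\Phi_d^{\ast}\circ\Phi_c^{\ast}$.

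Second, I would carry out the lift of a single generator $[l(\eta)]$. Via the Birman exact sequence, $[l(\eta)]$ represents the point-pushing mapping class in $\MCG(\Sigma_{g_i},v_i,w_i)$ that drags $v_i$ around the loop $L(\eta)$ based at $v_i$ and encircling $w_i$ through $\eta$. Since $L(\eta)$ bounds a disk containing $w_i$ whose boundary is the separating curve $\delta(\eta)$, the standard point-pushing formula identifies this element with a product of Dehn twists about $\delta(\eta)$ and about a small loop around $w_i$. Lifting through the two capping-off constructions that define $\Phi_c^{\ast}$ and $\Phi_d^{\ast}$, the curve $\delta(\eta)$ becomes $\tilde\delta(\eta)$ and the small loop around $w_i$ becomes $d$; the lifting ambiguity (the kernels of $\Phi_c^{\ast}$ and $\Phi_d^{\ast}$ being generated by $t_c$ and $t_d$ respectively) is pinned down by requiring that the lift lie in $\Ker\Phi_c\cap\Ker\Phi_d$, which forces precisely the correction factor $t_c^{-1}$. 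A direct computation then verifies both $\Phi_c(t_{\tilde\delta(\eta)}\cdot t_c^{-1}\cdot t_d^{-1})=1$ and $\Phi_d(t_{\tilde\delta(\eta)}\cdot t_c^{-1}\cdot t_d^{-1})=1$, and that the image under $\Phi_d^{\ast}\circ\Phi_c^{\ast}$ equals $[l(\eta)]$.

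The main obstacle is the simultaneous bookkeeping of two capping-off operations and the correct accounting of the correction twists $t_c^{-1}$, $t_d^{-1}$. This step is in fact cleaner here than in Theorem \ref{keythm_intersection}, because the bounding-pair hypothesis puts $\tilde\delta(\eta)$ entirely on one component of $\Sigma_{g+1}\setminus(c\cup d)$, so it interacts nontrivially with only one of $c,d$; nevertheless one must check that the product decomposition of the target is compatible with both correction twists, and that no additional terms enter from the separating nature of $c$.
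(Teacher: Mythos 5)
Your proposal is correct and follows essentially the same route the paper takes (and leaves implicit): Lemma \ref{lem_intersection1_separating} reduces the problem to $\Ker{F_{v_1,v_2}}\cap \Ker{F_{w_1,w_2}}$, the product decomposition together with Lemma \ref{lem_intersection2_separating} supplies the generators $[l(\eta)]$ for each factor (with $g_i\geq 2$ used exactly as you say), and each generator is lifted via the point-pushing formula to $t_{\tilde{\delta}(\eta)}\cdot t_{c}^{-1}\cdot t_{d}^{-1}$. Only a minor caveat on your closing remark: it is membership in $\Ker{\Phi_d}$ (rather than $\Ker{\Phi_c}$) that pins down the exponent of $t_c$, and although $\tilde{\delta}(\eta)$ lies in a single component of $\Sigma_{g+1}\setminus(c\cup d)$, both correction twists $t_c^{-1}$ and $t_d^{-1}$ are still needed, since capping off either curve identifies the image of $\tilde{\delta}(\eta)$ with a parallel copy of the other.
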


We are now ready to discuss the fibration $f:M\rightarrow I\times S^1$ which we defined in the beginning of this section. 
Let $N(p_i)\subset N$ be an open neighborhood of $p$ in $N$. 
We take a diffeomorphism $\theta_i: B_{\frac{1}{\sqrt{3}}} \rightarrow\nu N(p_i)$, where $B_{\frac{1}{\sqrt{3}}}\subset\mathbb{R}^3$ is a $3$-ball with radius $\frac{1}{{\sqrt3}}$, so that $h\circ \theta_i$ is described as follows: 
{\allowdisplaybreaks
\begin{align*}
\begin{array}{rccc}
h\circ\theta_1 : & B_{\frac{1}{\sqrt{3}}} & \longrightarrow & I  \\
& \rotatebox{90}{$\in$} &  & \rotatebox{90}{$\in$} \\ 
& (x,y,z) & \longmapsto     & x^2+y^2-z^2+\frac{4}{9},   
\end{array}\\
\begin{array}{rccc}
h\circ\theta_2 : & B_{\frac{1}{\sqrt{3}}} & \longrightarrow & I  \\
& \rotatebox{90}{$\in$} &  & \rotatebox{90}{$\in$} \\ 
& (x,y,z) & \longmapsto     & x^2-y^2-z^2+\frac{5}{9}.   
\end{array}
\end{align*}
}
We take a metric $g$ of $N$ so that the pull back $\theta_i^{\ast}g$ corresponds to the standard metric on $B_{\frac{1}{\sqrt{3}}}$. 
The metric $g$ determines a rank $1$ horizontal distribution $\mathcal{H}_h=(\Ker{dh})^{\perp}$ of $h|_{N\setminus \{p_1,p_2\}}$. 
For each $p\in N\setminus \{p_1,p_2\}$, we denote by $c_p(t)$ a horizontal lift of the curve $t\mapsto h(p)+t$ which satisfies $c_p(0)=p$. 
We define submanifolds $D_l^{\mathcal{H}_h}(p_i)$ and $D_u^{\mathcal{H}_h}(p_i)$ as follows: 
{\allowdisplaybreaks
\begin{align*}
D_l^{\mathcal{H}_h}(p_i) = \{p_i\} \cup \{p\in N \hspace{.3em} | \hspace{.3em} h(p)< \frac{3+i}{9}, \lim_{t \to\frac{3+i}{9}- h(p)} c_p(t)=p_i\}, \\
D_u^{\mathcal{H}_h}(p_i) = \{p_i\} \cup \{p\in N \hspace{.3em} | \hspace{.3em} h(p)> \frac{3+i}{9}, \lim_{t \to \frac{3+i}{9}- h(p)} c_p(t) =p_i\}. 
\end{align*}
}
Note that $D_l^{\mathcal{H}_h}(p_1)$ and $D_u^{\mathcal{H}_h}(p_2)$ are diffeomorphic to the unit interval $I$, while $D_u^{\mathcal{H}_h}(p_1)$ and $D_l^{\mathcal{H}_h}(p_2)$ are diffeomorphic to the $2$-disk $D^2$. 
We take a homotopy $h_t: N\rightarrow I$ with $h_0=h$ ($t\in I$) satisfying the following conditions: 

\begin{enumerate}[(a)]

\item the support of the homotopy is contained in $N(p_1)$, 

\item for any $t\in I$, $h_t$ has two critical points $p_1$ and $p_2$, 

\item for any $t\in I$, the critical point $p_1$ of $h_t$ is non-degenerate and the index of this is $1$, 

\item a function $t\mapsto h_t(p_1)$ is monotone increasing, 


\item $h_{1}(p_1)=\frac{2}{3}$. 

\end{enumerate}
This homotopy changes the order of critical points. 
We take a smooth function $\rho:I\rightarrow I$ satisfying the following properties: 

\begin{itemize}

\item $\rho\equiv 0$ on $\left[0,\frac{1}{6}\right]\amalg\left[\frac{5}{6},1\right]$, 

\item $\rho\equiv 1$ on $\left[\frac{1}{3}, \frac{2}{3}\right]$, 

\item $\rho$ is monotone increasing on $\left[\frac{1}{6}, \frac{1}{3}\right]$

\item $\rho(1-s)=\rho(s)$ for any $s\in [0,1]$. 

\end{itemize}

By using $h_t$ and $\rho$, we define a homotopy $f_t:M=N\times S^1\rightarrow I\times S^1$ as follows: 
\[
\begin{array}{rccc}
f_t : & M=N\times S^1 & \longrightarrow & I\times S^1  \\
& \rotatebox{90}{$\in$} &  & \rotatebox{90}{$\in$} \\ 
& (x,s) & \longmapsto     & (h_{t\rho(s)}(x), s). 
\end{array}
\]

Since $N$ is obtained by attaching the $1$-handle and the $2$-handle to $\Sigma_g\times I$, $\partial N$ contains the surface $\Sigma_g\times \{0\}$, which we denote by $\Sigma$ for simplicity. 
Moreover, $\Sigma$ intersects $D_l^{\mathcal{H}_{h}}(p_1)$ at two points $v_1,v_2\in \Sigma$, and $\Sigma$ intersects $D_l^{\mathcal{H}_h}(p_2)$ at a simple closed curve $d\subset\Sigma$. 
Let $\Pi(\Sigma, v_i, d)$ be a set of embedded paths from the point $v_i$ to a point in $d$. 
For an element $\eta\in \Pi(\Sigma,v_i,d)$, we denote by $L(\eta): ([0,1], \{0,1\})\rightarrow (\Sigma\setminus d, v_i)$ a loop in the neighborhood of $\eta\cup d$, which is injective on $[0,1)$ and homotopic to a loop obtained by connecting $v_i$ to $d$ using $\eta$. 
For an element $\eta\in \Pi(\Sigma,v_i,d)$, we take a homotopy of horizontal distributions $\{\mathcal{H}_t^{\eta}\}$ ($t\in[0,1]$) of $h_1|_{N\setminus \{p_1,p_2\}}$ with $\mathcal{H}_0^{\eta}=\mathcal{H}_{h_1}$ which satisfies the following conditions: 

\begin{enumerate}[(a)]

\setcounter{enumi}{5}

\item the support of the homotopy is contained in $h_1^{-1}(\left[\frac{5}{9},\frac{2}{3}\right])$, 

\item $\mathcal{H}_0^{\eta}=\mathcal{H}_1^{\eta}$, 

\item the arc $D_l^{\mathcal{H}_t^{\eta}}(p_1)$ intersects $\Sigma_g$ at the point $L(\eta)(t), v_j\in \Sigma$, where $\{i,j\}=\{1,2\}$. 

\end{enumerate}

\noindent
Such a homotopy exists because $L(\eta)$ is null-homotopic on the surface obtained by performing a surgery to $\Sigma$ along $d$. 

We next take a $1$-parameter family of homotopies $h_{t,s}: N\rightarrow I$ ($t,s\in I$) with $h_{0,s}=h_{\rho(s)}$ which satisfies the following conditions: 

\begin{enumerate}[(a)]

\setcounter{enumi}{8}

\item for any $s\in\left[0,\frac{1}{3}\right]\cup \left[\frac{2}{3},1\right]$, the homotopy $h_{t,s}$ corresponds to $h_{\rho(s)(1-t)}$, 

\item for any $t,s\in I$, $h_{t,s}$ has two critical points $p_1$ and $p_2$,

\item for any $s\in\left[\frac{1}{3},\frac{2}{3}\right]$, the support of the homotopy $h_{t,s}$ is contained in a small neighborhood of $D_{l}^{\mathcal{H}_{3s-1}^{\eta}}(p_1)\cup D_{u}^{\mathcal{H}_{3s-1}^{\eta}}(p_1)$, 
 
\item for any $s\in I$, the homotopy $h_{t,s}$ is identical in a neighborhood of $\partial N$, 

\item for any $t,s\in I$, the critical point $p_1$ of $h_{t,s}$ is non-degenerate and the index of this is $1$, 

\item for any $s\in I$, $h_{t,s}(p_1)$ is equal to $h_{\rho(s)(1-t)}(p_1)$, 

\end{enumerate}

\noindent
By using this family of homotopies, we define a homotopy $\tilde{f}_t:M\rightarrow I\times S^1$ as follows: 
\[
\begin{array}{rccc}
\tilde{f}_t : & M=N\times S^1 & \longrightarrow & I\times S^1  \\
& \rotatebox{90}{$\in$} &  & \rotatebox{90}{$\in$} \\ 
& (x,s) & \longmapsto     & (h_{t,s}(x), s), 
\end{array}
\]

Eventually, we obtain a new fibration $\tilde{f}_1$. 
By construction, $\tilde{f}_1$ can be obtained from the original fibration $f$ by the homotopies $f_t$ and $\tilde{f}_t$. 
In these homotopies, the image of singular loci are changed like Reidemeister move of type II (cf. Figure \ref{changesingularloci}). 
As is called in \cite{Wil2}, we call this kind of move an {\it $R_2$-move}. 

\begin{figure}[htbp]
\begin{center}
\includegraphics[width=110mm]{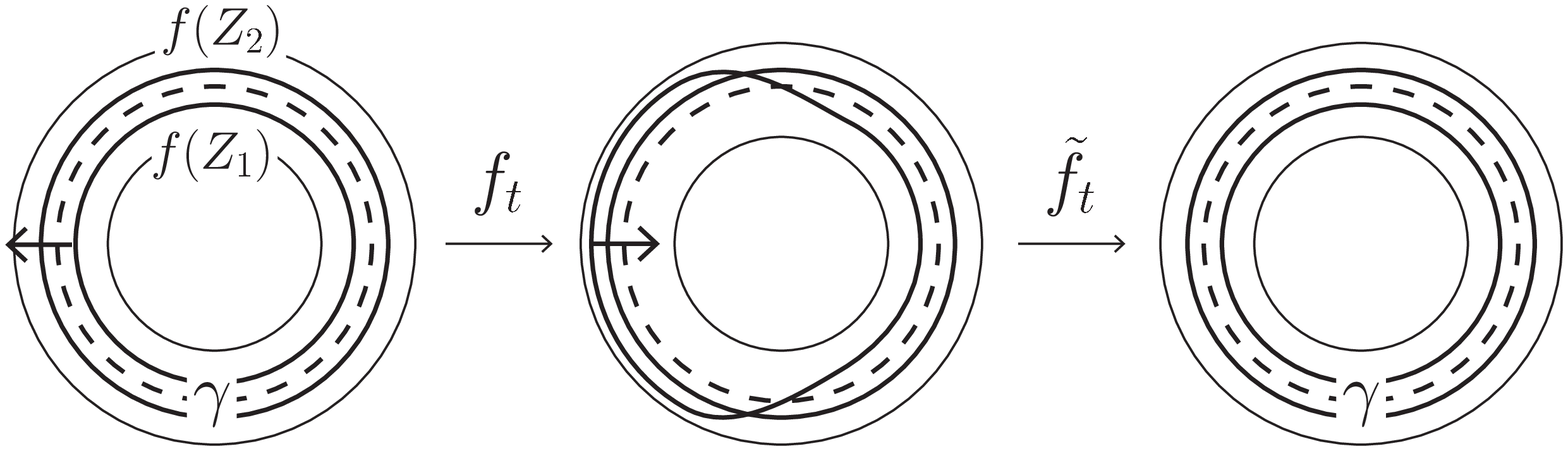}
\end{center}
\caption{Left: the image of singular loci of $f=f_0$. The bold circles describe the image $f(Z_1)\amalg f(Z_2)$ and the bold dotted circle describes $\gamma$. 
Center: the image of singular loci of $f_1=\tilde{f}_0$. 
Right: the image of singular loci of $\tilde{f}_1$, which corresponds to that of $f$. }
\label{changesingularloci}
\end{figure}

As mentioned in the beginning of this section, we can identify $f^{-1}(\{ \frac{1}{2} \} \times \{0\})$ with the closed surface $\Sigma_{g+1}$. 
Thus, a monodromy $\varphi_{\gamma}\in \mathcal{M}_{g+1}$ of $\tilde{f}$ along $\gamma$ can be defined. 
Since $\varphi$ is contained in the group $\MCG{(\Sigma_{g+1})}(c,d)$ and , an identification $f^{-1}(\{ \frac{1}{2} \} \times \{0\})\cong \Sigma_{g+1}$ is unique up to Dehn twist $t_{c}$, $\varphi_{\gamma}$ is independent of an identification $f^{-1}(\{ \frac{1}{2} \} \times \{0\})\cong \Sigma_{g+1}$. 

\begin{lem}\label{lem_monodromyalonggamma}

$\varphi_{\gamma}=t_{\tilde{\delta}(\eta)}\cdot t_{c}^{-1}\cdot t_{d}^{-1}$, where $\tilde{\delta}\subset \Sigma_{g+1}$ is a simple closed curve which corresponds to a regular neighborhood of $\eta\cup d\subset \Sigma$ under the identification $\Sigma\setminus \{v_1,v_2\}\cong \Sigma_{g+1}\setminus d$. 

\end{lem}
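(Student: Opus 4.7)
The plan is to show (a) that $\varphi_\gamma\in\Ker{\Phi_c}\cap\Ker{\Phi_d}$, and (b) that $\Phi_d^{\ast,\text{ori}}\circ\Phi_c^{\ast,\text{ori}}$ sends $\varphi_\gamma$ to the loop $[l(\eta)]\in\pi_1({\Sigma_{g-1}}^4\setminus\tilde{\Delta},(v_1,v_2,w_1,w_2))$ that $t_{\tilde{\delta}(\eta)}\cdot t_c^{-1}\cdot t_d^{-1}$ also maps to. Since $\Phi_d^{\ast,\text{ori}}\circ\Phi_c^{\ast,\text{ori}}$ is injective on $\Ker{\Phi_c}\cap\Ker{\Phi_d}$ by Lemma \ref{lem_intersection1}, this forces $\varphi_\gamma=t_{\tilde{\delta}(\eta)}\cdot t_c^{-1}\cdot t_d^{-1}$.

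For (a): geometrically, $\Phi_c(\varphi_\gamma)$ is the monodromy along $\gamma$ of the fibration restricted to the $\Sigma_g$-fibered side of the fold $Z_1$, read in the capped-off fiber $S_c$. By condition (l), $h_{t,s}$ is trivial near $\partial N$; by conditions (i) and (k), its support lies in a small neighborhood of $D_l^{\mathcal{H}_{3s-1}^\eta}(p_1)\cup D_u^{\mathcal{H}_{3s-1}^\eta}(p_1)$, which in turn lies in $\{h\geq 4/9\}$ (condition (n) keeps $p_1$ at height $\geq 4/9$ throughout). Hence $\tilde{f}_1$ agrees with the product $h\times\text{id}_{S^1}$ on the sub-level region $h^{-1}([0,4/9))\times S^1$, so its monodromy along $\gamma$ there is trivial, giving $\Phi_c(\varphi_\gamma)=1$. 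For $\Phi_d(\varphi_\gamma)=1$ we argue analogously on the super-level region $h^{-1}((5/9,1])\times S^1$: condition (j) fixes $p_2$, and the only piece of the support that could reach into this region is $D_u^{\mathcal{H}_{3s-1}^\eta}(p_1)$, which is disjoint from $p_2$ and leaves a collar of $h^{-1}(1)$ untouched, so the fibration is again a product there.

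For (b): the image of $t_{\tilde{\delta}(\eta)}\cdot t_c^{-1}\cdot t_d^{-1}$ is exactly $[l(\eta)]$ via the standard point-pushing identity in the Birman sequence (\ref{Birmanexactsequence}), as already used to assemble Theorem \ref{keythm_intersection} from Lemmas \ref{lem_intersection1} and \ref{lem_intersection2}. The image of $\varphi_\gamma$ is, by definition, the loop in ${\Sigma_{g-1}}^4\setminus\tilde{\Delta}$ traced by the four cap-off marked points as $s$ runs once around $\gamma$: the endpoints of $D_l^{\mathcal{H}}(p_1)$ on $\Sigma$ and of $D_l^{\mathcal{H}}(p_2)$ on $\Sigma$ (and their counterparts from the upper side) become marked points on $\Sigma_{g-1}$ once $c$ and $d$ are capped off. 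Condition (h) is designed so that, as $s$ traverses $[1/3,2/3]$, the intersection point $D_l^{\mathcal{H}_{3s-1}^\eta}(p_1)\cap\Sigma$ moves along $L(\eta)$ in the $v_k$-component while the other three marked points remain fixed; conditions (f), (g), (i) ensure stationarity outside $[1/3,2/3]$. The resulting loop in the configuration space is precisely $l(\eta)$ from Lemma \ref{lem_intersection2}.

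The main obstacle is the verification in (b): one must track the motion of the cap-off marked points through the construction of $\tilde{f}_1$, matching orientations and the cap-off identification of $\Sigma\setminus\{v_1,v_2\}$ with $\Sigma_{g+1}\setminus d$ to the Birman identification of $\Ker{F_{v_1,v_2}}\cap\Ker{F_{w_1,w_2}}$ with $\pi_1({\Sigma_{g-1}}^4\setminus\tilde{\Delta})$, to ensure the loop produced is $l(\eta)$ itself and not an inverse or conjugate. A parallel argument, using Lemma \ref{lem_intersection1_separating} and Theorem \ref{keythm_intersection_separating} in place of their non-bounding-pair counterparts, handles the case where $(c,d)$ is a bounding pair.
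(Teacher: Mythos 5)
Your overall strategy coincides with the paper's: show $\varphi_\gamma\in\Ker{\Phi_c}\cap\Ker{\Phi_d}$, compute its image under one of the injective restrictions of Lemma \ref{lem_intersection1}, and pull back. The paper, however, uses the third isomorphism of that lemma rather than the first: it computes $\Phi_c^{\ast}(\varphi_\gamma)$ in the two-marked-point group $\MCG{(\Sigma_g,v_1,v_2)}(d)$, exhibiting an explicit path in $\Diff^{+}(\Sigma_g,v_j)$ (built from the horizontal lifts $\psi_t$ and $\tilde{\psi}_s$ over a suitable neighborhood $U$ of the descending and ascending manifolds of $p_1$) that lifts $L(\eta)$ under the evaluation fibration, giving $\Phi_c^{\ast}(\varphi_\gamma)=Push(L(\eta))=t_{\tilde{\delta}(\eta)}\cdot t_d^{-1}$. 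This is cleaner than your four-point route: $d$ is kept as a curve throughout, so the marked points $w_1,w_2$ --- which are not points of any fiber of $f$ but only appear after surgering $d$ --- never have to be tracked, and the orientation and conjugacy bookkeeping you single out as ``the main obstacle'' collapses to a single two-point point-pushing identity. Note also that your step (b) is stated as a plan rather than carried out; the explicit lifting computation is precisely the content of the lemma, so as written the proposal defers the essential step.

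Two supporting claims in your step (a) are false as stated, although the conclusion survives. The support of the family $h_{t,s}$ is a neighborhood of $D_l^{\mathcal{H}^{\eta}_{3s-1}}(p_1)\cup D_u^{\mathcal{H}^{\eta}_{3s-1}}(p_1)$, and the descending arc $D_l(p_1)$ runs from $p_1$ all the way down to $\Sigma=h^{-1}(0)$ (condition (h) prescribes exactly where it meets $\Sigma$); hence the support is not contained in $\{h\geq\frac{4}{9}\}$, and $\tilde{f}_1$ does not agree with $h\times\mathrm{id}_{S^1}$ on $h^{-1}([0,\frac{4}{9}))\times S^1$. Likewise, when $p_1$ has been raised to height $\frac{2}{3}$ the disk $D_u(p_1)$ lies entirely above height $\frac{5}{9}$, so the fibration is not a product over the super-level region either. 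The correct (and shorter) argument is the paper's: the homotopies are identical near $\partial N$, so both boundary pieces $\Sigma_g\times\{0\}\times S^1$ and $h^{-1}(1)\times S^1$ remain trivial bundles; since the annuli $[0,\frac{4}{9})\times S^1$ and $(\frac{5}{9},1]\times S^1$ contain no critical values of $\tilde{f}_1$, the monodromies of the capped-off fibrations on either side --- which are exactly $\Phi_c(\varphi_\gamma)$ and $\Phi_d(\varphi_\gamma)$ --- are trivial.
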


\begin{proof}[Proof of Lemma \ref{lem_monodromyalonggamma}]

Since both sides of the boundary $\partial M$ are trivial surface bundles, $\varphi_{\gamma}$ is contained in the group $\Ker{\Phi_c} \cap \Ker{\Phi_d}$. 
We consider the element $\Phi_c^{\ast}(\varphi_\gamma)\in \MCG{(\Sigma_g,v_1,v_2)}(d)$. 
This element can be realized as a monodromy of a certain fibration in the following way: 
we first take a sufficiently small neighborhood of the following subset of $M$: 
\[
\coprod_{s\in\left[0,\frac{1}{3}\right]\amalg\left[\frac{2}{3},1\right]} \biggl(\bigl(D_l^{\mathcal{H}_{h_{\rho(s)}}}(p_1)\cup D_u^{\mathcal{H}_{h_{\rho(s)}}}(p_1)\bigr)\times \{s\}\biggr) \coprod_{s\in\left[\frac{1}{3},\frac{2}{3}\right]} \biggl(\bigl(D_l^{\mathcal{H}_{3s-1}^{\eta}}(p_1)\cup D_u^{\mathcal{H}_{3s-1}^{\eta}}(p_1)\bigr)\times \{s\}\biggr).
\]
We denote this neighborhood by $U\subset M$. 
The restriction $\tilde{f}_0|_{M\setminus U}$ is a fibration with a connected singular locus $Z_2$. 
We take a suitable $U$ so that we can take a horizontal distribution $\tilde{\mathcal{H}}$ of $\tilde{f}_0|_{M\setminus (U\cup Z_2)}$ satisfying the following conditions:

\begin{itemize}

\item $\tilde{\mathcal{H}}$ is along the boundary $\partial U$, 

\item $\tilde{\mathcal{H}}$ corresponds to $\coprod\hspace{-1.6em}\raisebox{-1em}{\footnotesize $s\in S^1 $}(\mathcal{H}_{h_{\rho(s)}}\oplus T_s S^1)$ on a small neighborhood of $\subset M$, $\partial N\times S^1\subset M$ and $\tilde{f}_0^{-1}(I\times (\left[0, \frac{1}{6}\right]\cup \left[\frac{5}{6},1\right]))\subset M$. 

\end{itemize}

This distribution gives a monodromy of $\tilde{f}_0|_{M\setminus \overline{U}}$ along $\gamma$. 
We identify $\Sigma=\Sigma_g\times \{0\}\subset \partial N$ with $\Sigma_g$. 
The fiber $\tilde{f}_0^{-1}(\{\frac{1}{2}\}\times \{0\})\setminus \overline{U}$ is canonically identified with $\Sigma_g\setminus \{v_1,v_2\}$. 
By the condition (k) on the family of homotopies $\{h_{t,s}\}$, this monodromy corresponds to the element $\Phi_c^{\ast}(\varphi_\gamma)$. 

Since the region $\left[0,\frac{1}{2}\right]\times S^1$ does not contains any singular values of the fibration $\tilde{f}_0|_{M\setminus U}$, $\Phi_c^{\ast}(\varphi_\gamma)$ corresponds to the monodromy of $\tilde{f}_0|_{M\setminus U}$ along the following loop: 
\begin{equation*}
\tilde{\gamma}: I\ni t\mapsto \begin{cases}
(0,t) & (t\in \left[0,\frac{1}{3}\right]) \\
\left(\frac{9}{2}\left(t-\frac{1}{3}\right), \frac{1}{3}\right) & (t\in \left[\frac{1}{3},\frac{4}{9}\right]) \\
\left(\frac{1}{2}, 3\left(t-\frac{1}{3}\right)\right) & (t\in \left[\frac{4}{9},\frac{5}{9}\right]) \\
\left(\frac{9}{2}\left(\frac{2}{3}-t\right), \frac{2}{3}\right) & (t\in \left[\frac{5}{9},\frac{2}{3}\right]) \\
\left(0,t\right) & (t\in \left[\frac{2}{3},1\right]) \\
\end{cases}\in I\times S^1. 
\end{equation*}
We denote by $\psi_t: \tilde{f}_0^{-1}(\tilde{\gamma}(0))\cong \Sigma_g \rightarrow \tilde{f}_0^{-1}(\tilde{\gamma}(t))$ the diffeomorphism obtained by using the distribution $\tilde{\mathcal{H}}$ and the path $\tilde{\gamma}|_{[0,t]}$. 
Note that we can canonically identify $\tilde{f}_0^{-1}(\tilde{\gamma}(t))$ with $\Sigma_g$ for $t\in\left[0,\frac{1}{3}\right]\amalg\left[\frac{2}{3},1\right]$.  Moreover, under the identification, $\psi_t$ corresponds to the identity for $t\in\left[0,\frac{1}{3}\right]$, and $\psi_t=\psi_1$ for $t\in\left[\frac{2}{3},1\right]$ since $\tilde{\mathcal{H}}$ corresponds to $\coprod\hspace{-1.6em}\raisebox{-1em}{\footnotesize $s\in S^1 $}(\mathcal{H}_{h_{\rho(s)}}\oplus T_s S^1)$ on $\partial N\times S^1$. 

We can take the following diffeomorphism by using the horizontal distribution $\tilde{\mathcal{H}}$ of $\tilde{f}_0|_{M\setminus Z_1\cup Z_2}$ together with its horizontal lifts of $t\mapsto (t,s)\in I\times S^1$: 
\[
\tilde{\psi}_s: \Sigma_g\cong \tilde{f}_{0}^{-1}((0,s)) \rightarrow \tilde{f}_0^{-1}\left(\left(\frac{1}{2}, s\right)\right) \hspace{.5em}\left(s\in \left[\frac{1}{3},\frac{2}{3}\right]\right). 
\]
By the definitions of $\psi_t$ and $\tilde{\psi}_s$, we obtain the following equalities: 
{\allowdisplaybreaks
\begin{align*}
\tilde{\psi}_{\frac{1}{3}}^{-1}\circ \psi_{\frac{4}{9}} & = \text{id}_{\Sigma_g}, \\
\tilde{\psi}_{\frac{2}{3}}^{-1}\circ \psi_{\frac{5}{9}} & = \psi_1, \\
\tilde{\psi}_{3\left(t-\frac{1}{3}\right)}^{-1}\circ \psi_{t} (v_i) & = L(\eta)(9t-4) \hspace{.5em}\left(\text{for }t\in \left[\frac{4}{9},\frac{5}{9}\right]\right). 
\end{align*}
}The above equations mean that the path $[0,1]\ni t\mapsto \tilde{\psi}_{\frac{1}{3}(t+1)}^{-1}\circ \psi_{\frac{1}{9}(t+4)}\in \Diff^+(\Sigma_g, v_j)$ is the lift of the loop $L(\eta)$ in $\Sigma_g\setminus \{v_j\}$ under the following locally trivial fibration: 
\[
\Diff^+{(\Sigma_g, v_i, v_j)} \hookrightarrow \Diff^+{(\Sigma_g,v_j)} \xrightarrow{\varepsilon} \Sigma_g\setminus \{v_j\}, 
\] 
where $\varepsilon$ is the evaluation map. 
Thus, we obtain: 
\[
\Phi_{c}^{\ast}(\varphi)=[\psi_1]=Push(L(\eta)) = t_{\tilde{\delta}(\eta)}\cdot t_d^{-1}\in \MCG{(\Sigma_g, v_1,v_2)}(d), 
\]
where $Push(L(\eta))$ is the pushing map along $L(\eta)$. 
By Lemma \ref{lem_intersection1} or Lemma \ref{lem_intersection1_separating}, $\Phi_{c}^{\ast}|_{\Ker{\Phi_c}\cap \Ker{\Phi_d}}$ is an isomorphism. 
We therefore obtain: 
{\allowdisplaybreaks
\begin{align*}
\varphi_\gamma & =\Phi_{c}^{\ast, -1}\circ\Phi_{c}^{\ast}(\varphi_\gamma) \\
& =\Phi_c^{\ast, -1}(t_{\tilde{\delta}(\eta)}\cdot t_d^{-1}) \\
& =t_{\tilde{\delta}(\eta)}\cdot t_c^{-1}\cdot t_d^{-1}. 
\end{align*}
}
This completes the proof of Lemma \ref{lem_monodromyalonggamma}. 

\end{proof}

Combining Theorem \ref{keythm_intersection} and Theorem \ref{keythm_intersection_separating}, we obtain the following theorem. 

\begin{thm}\label{keythm_monodromyalonggamma}

Let $f:M\rightarrow I\times S^1$ and $\gamma\subset I\times S^1$ be as in the beginning of this section. 
Assume that $g$ is greater than or equal to $3$ when $(c,d)$ is not a bounding pair, and that both of the number $g_1$ and $g_2=g-g_1$ are greater than or equal to $2$ when a $(c,d)$ is a bounding pair of genus $g_1$. 
For any $\varphi\in \Ker{\Phi_c}\cap \Ker{\Phi_d}$, we can change $f$ by successive application of $R_2$-moves so that the monodromy of $f|_{M\setminus (f^{-1}(f(Z_1))\cup f^{-1}(Z_2))}$ along $\gamma$ corresponds to the element $\varphi$. 

\end{thm}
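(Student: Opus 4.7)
The plan is to decompose $\varphi$ as a word in the generators of $\Ker{\Phi_c}\cap\Ker{\Phi_d}$ furnished by Theorem \ref{keythm_intersection} or \ref{keythm_intersection_separating}, and then realize that word geometrically on $\gamma$ by performing one $R_2$-move per letter, each contribution being controlled by Lemma \ref{lem_monodromyalonggamma}.

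The hypothesis on the genera is exactly what is required to invoke Theorem \ref{keythm_intersection} (when $(c,d)$ is not a bounding pair) or Theorem \ref{keythm_intersection_separating} (when it is); either way, we obtain
\[
\varphi=\prod_{k=1}^{N}\bigl(t_{\tilde{\delta}(\eta_k)}\cdot t_c^{-1}\cdot t_d^{-1}\bigr)^{\epsilon_k}
\]
for some embedded paths $\eta_k$ and signs $\epsilon_k\in\{\pm 1\}$, the product being taken with the paper's convention of reversed composition. Now subdivide $\gamma=\{\frac{1}{2}\}\times S^1$ into $N$ consecutive closed sub-arcs $\gamma_1,\ldots,\gamma_N$, and for the $k$-th arc perform an $R_2$-move supported in a neighborhood of $f^{-1}(\mathrm{Int}(\gamma_k))$ following precisely the construction of $\tilde{f}_1$ in Lemma \ref{lem_monodromyalonggamma}, with the horizontal-distribution homotopy driven by the path $\eta_k$. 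When $\epsilon_k=-1$ we run the same construction in reverse, which is again an $R_2$-move and, by the same lemma, contributes the inverse generator. The supports can be chosen pairwise disjoint, so the moves do not interfere, and the fibration is unchanged outside them.

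Since each sub-arc meets a single modified region and the fibration is left alone elsewhere, the monodromy of the final fibration along the concatenated loop $\gamma=\gamma_1\cdots\gamma_N$ is the ordered product of the local contributions, which by Lemma \ref{lem_monodromyalonggamma} is exactly $\varphi$.

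The main point requiring care is the bookkeeping of reference fibers. The generators given by Theorem \ref{keythm_intersection} are expressed with respect to one fixed identification $f^{-1}((\frac{1}{2},0))\cong\Sigma_{g+1}$, but after the first $k-1$ moves the natural identification of the fiber over the left endpoint of $\gamma_k$ differs from this reference by conjugation by the partial product of the preceding contributions. To make the $k$-th contribution read as the intended generator in the original reference fiber, one must transport $\eta_k$ by that conjugation before invoking Lemma \ref{lem_monodromyalonggamma}; equivalently, one chooses $\eta_k$ in the current (transported) fiber, so that the individual monodromies compose by ordinary group multiplication exactly as in the decomposition above. This is the only ingredient required beyond Lemma \ref{lem_monodromyalonggamma} and the two generation results.
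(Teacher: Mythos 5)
Your proof is correct and follows essentially the same route as the paper: the theorem is obtained by combining the generation results (Theorems \ref{keythm_intersection} and \ref{keythm_intersection_separating}, which is where the genus hypotheses enter) with Lemma \ref{lem_monodromyalonggamma}, realizing each generator of $\Ker{\Phi_c}\cap\Ker{\Phi_d}$ by one $R_2$-move and composing. Your explicit treatment of the reference-fiber conjugation (which works because the generating set is invariant under conjugation by elements of $\MCG{(\Sigma_{g+1})}(c,d)$) is a detail the paper leaves implicit, but it is not a different argument.
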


\section{Relation between vanishing cycles and flip and slip moves}\label{sec_mainalgorithm}

Let $f:M\rightarrow D^2$ be a purely wrinkled fibration satisfying the following conditions:

\begin{enumerate}

\item the set of singularities $\mathcal{S}_f$ of $f$ is an embedded circle in $\Int{M}$, 

\item the restriction $f|_{\mathcal{S}_f}$ is an embedding, 

\item either of the following conditions on regular fibers holds: 

\begin{itemize}

\item a regular fiber on the outside of $f(\mathcal{S}_f)$ is connected, while that on the inside of $f(\mathcal{S}_f)$ is disconnected, 

\item every regular fiber is connected and the genus of a regular fiber on the outside of $f(\mathcal{S}_f)$ is higher than that on the inside of $f(\mathcal{S}_f)$. 

\end{itemize}

\end{enumerate}

We fix a point $p_0\in \partial D^2$ and an identification $f^{-1}(p_0)\cong \Sigma_g$. 
Let $\varphi_0\in\mathcal{M}_g$ be the monodromy along $\partial D^2$ oriented counterclockwise around the center of $D^2$ with base point $p_0$. 
In this section, we will give an algorithm to obtain vanishing cycles in {\it one} higher genus regular fiber of a fibration obtained by applying flip and slip to $f$. 

We first consider the simplest case, that is, assume that $f$ has no cusps. 
We take a reference path $\gamma_0$ in $\partial D^2$ connecting $p_0$ to a point in the image of indefinite folds so that it satisfies $\Int{\gamma_0}\cap f(\mathcal{S}_f)=\emptyset$. 
This determines a vanishing cycle $c\subset \Sigma_g$ of indefinite folds. 
Then, it is easy to prove that $\varphi_0$ is contained in the group $\Ker{\Phi_c}$. 
To give an algorithm precisely, we prepare several conditions. 
The first condition is on an embedded path $\alpha\subset \Sigma_g$. 

\vspace{.8em}

\noindent
{\it Condition $C_1(c)$}: 
A path $\alpha\subset \Sigma_g$ intersects $c$ at the unique point $q\in c$ transversely. 

\vspace{.8em}

We take a path $\alpha\subset \Sigma_g$ so that $\alpha$ satisfies the condition $C_1(c)$. 
We put $\partial \alpha =\{w_1,w_2\}$. 
The second condition is on a simple closed curve $d\subset \Sigma_{g+1}$ and a diffeomorphism $j:\Sigma_g\setminus \{w_1,w_2\} \rightarrow \Sigma_{g+1}\setminus d $. 

\vspace{.8em}

\noindent
{\it Condition $C_2(c,\alpha)$}: 
the closure of $j(\Int{\alpha})$ in $\Sigma_{g+1}$ is a simple closed curve.  

\vspace{.8em}

We take a simple closed curve $d\subset \Sigma_{g+1}$ and a diffeomorphism $j:\Sigma_g\setminus \{w_1,w_2\} \rightarrow \Sigma_{g+1}\setminus d $ so that they satisfy the condition $C_2(c,\alpha)$. 
We put $\tilde{c}=j(c)$. 
The last condition is on an element $\varphi\in \MCG{(\Sigma_{g+1})} (\tilde{c},d)$. 

\vspace{.8em}
\noindent
{\it Condition $C_3(c,\alpha,d,j, \varphi_0)$}: 
$\Phi_{\tilde{c}}(\varphi)=1$ in $\MCG{(\Sigma_g)}(d)$ and $\Phi_d (\varphi) =\varphi_0^{-1}$ in $\MCG{(\Sigma_g)}(c)$. 

\vspace{.8em}

For the sake of simplicity, we will call the above conditions $C_1$, $C_2$ and $C_3$ if elements $c,\alpha, d,j$ and $\varphi_0$ are obvious. 

\begin{thm}\label{mainalgorithm}

Let $f:M\rightarrow D^2$ be a purely wrinkled fibration we took in the beginning of this section. 
We assume that $f$ has no cusps. 

\begin{enumerate}

\item[$\mathrm{(1)}$] Let $\tilde{f}$ be a fibration obtained by applying flip and slip to $f$. 
We take a point $q_0$ in the inside of $f(\mathcal{S}_{\tilde{f}})$, and reference paths $\hat{\gamma}_1, \hat{\gamma}_2, \hat{\gamma}_3$ and $\hat{\gamma}_4$ in $D^2$ connecting $q_0$ to a point on the respective fold arcs between cusps so that these paths appear in this order when we go around $q_0$ counterclockwise. 
We denote by $e_i\subset \tilde{f}^{-1}(q_0)$ a vanishing cycle determined by the path $\hat{\gamma}_i$. 
Then, there exist an identification $\tilde{f}^{-1}(q_0)\cong \Sigma_{g+1}$ and elements $\alpha, d, j$ and $\varphi$ satisfying the conditions $C_1, C_2$ and $C_3$ such that the following equality holds up to cyclic permutation: 
\[
(e_1,e_2,e_3,e_4) = (\tilde{c}, \alpha^\prime, d, \tilde{\alpha}), 
\] 
where $\tilde{c}=j(c)$, $\tilde{\alpha}$ is the closure of $j(\Int{\alpha})$ in $\Sigma_{g+1}$, and $\alpha^\prime =\varphi^{-1}(\tilde{\alpha})$. 

\item[$\mathrm{(2)}$] Let $\alpha, d, j$ and $\varphi$ be elements satisfying the conditions $C_1, C_2$ and $C_3$. 
We take simple closed curves $\tilde{c}, \tilde{\alpha}$ and $\alpha^\prime$ as in $\mathrm{(1)}$. 
Suppose that the genus of a higher genus fiber $g$ of $f$ is greater than or equal to $3$ when $(\tilde{c}, d)$ is not a bounding pair, and that both of the genera $g_1$ and $g_2$ are greater than or equal to $2$ when $(\tilde{c},d)$ is a bounding pair of genus $g_1$, where we put $g_2=g-g_1$. 
Then, there exists a fibration $\tilde{f}$ obtained by applying flip and slip to $f$ such that, for reference paths $\hat{\gamma}_1,\ldots,\hat{\gamma}_4$ as in $\mathrm{(1)}$, the corresponding vanishing cycles $e_1,\ldots,e_4$ satisfy the following equality up to cyclic permutation: 
\[
(e_1,e_2,e_3,e_4) = (\tilde{c}, \alpha^\prime, d, \tilde{\alpha}).  
\]

\end{enumerate}

\end{thm}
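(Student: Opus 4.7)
The plan is to decompose flip-and-slip into three elementary moves---two flips followed by the slip, which is a variant of the $R_2$-move analyzed in Section \ref{sec_BLFoverannulus}---and to track the vanishing cycles through each step, reducing the realization statement in part (2) to Theorem \ref{keythm_monodromyalonggamma}.

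\emph{Part (1).} I would first apply the two flips to two disjoint segments of the original singular circle of $f$. By the local flip relation (Figure \ref{baselocus_flip}), each flip introduces a new vanishing cycle represented by a simple closed curve on $\Sigma_g$ meeting $c$ transversally in one point, and together these produce a path $\alpha\subset\Sigma_g$ satisfying $C_1(c)$. The subsequent slip surgers the two lens-shaped regions together into a single higher-genus region by attaching a $1$-handle. I would identify the belt circle of this $1$-handle with $d\subset\Sigma_{g+1}$, so the gluing is encoded by a diffeomorphism $j:\Sigma_g\setminus\{w_1,w_2\}\to\Sigma_{g+1}\setminus d$ under which $\alpha$ closes up to a simple closed curve $\tilde\alpha$; this is $C_2(c,\alpha)$. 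Choosing the identification $\tilde f^{-1}(q_0)\cong\Sigma_{g+1}$ and the reference paths $\hat\gamma_i$ so that $e_1=\tilde c=j(c)$ and $e_3=d$, the cusp relations at each of the four cusps of $\tilde f$ force $e_4=\tilde\alpha$ and $e_2=\varphi^{-1}(\tilde\alpha)$, where $\varphi\in\MCG{(\Sigma_{g+1})}(\tilde c,d)$ encodes the monodromy produced by the slip. Comparing this with the outer monodromy $\varphi_0$ of $f$ via the horizontal-distribution argument of Lemma \ref{lem_monodromyalonggamma} then yields $C_3$.

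\emph{Part (2).} Given data $(\alpha,d,j,\varphi)$ satisfying $C_1, C_2, C_3$, I would run the construction backwards. Apply two flips to $f$ arranged so that the newly introduced fold vanishing cycles correspond to parallel copies of $\alpha$, then perform a slip creating the $1$-handle attachment encoded by $j$. This produces a candidate fibration whose vanishing cycles along the four arcs are of the form $(\tilde c, *, d, \tilde\alpha)$. What remains is to arrange $* = \alpha' = \varphi^{-1}(\tilde\alpha)$. Condition $C_3$ says exactly that, relative to a canonical base slip (whose middle-annulus monodromy is determined by $\varphi_0$ and the local flip data), the residual adjustment required lies in $\Ker\Phi_{\tilde c}\cap\Ker\Phi_d$. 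Theorem \ref{keythm_monodromyalonggamma} then furnishes a further sequence of $R_2$-moves localized in the middle annular region, not affecting $\tilde c$, $d$, or $\tilde\alpha$, which modifies the middle monodromy to exactly $\varphi$ and hence forces $e_2=\alpha'$. The genus hypotheses ($g\geq 3$ for non-bounding $(\tilde c,d)$, and $g_1,g_2\geq 2$ for bounding pairs) are precisely those required to invoke Theorem \ref{keythm_monodromyalonggamma}.

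\emph{Main obstacle.} The hard part is the bookkeeping: consistently fixing all the fiber identifications, the orientations of the reference paths, and the correspondence between the geometric middle-annulus monodromy and the abstract element $\varphi$ of $C_3$. In particular, one has to verify that the cyclic ordering $(\tilde c,\alpha',d,\tilde\alpha)$ is correctly reproduced, and that the precise appearance of $\varphi_0^{-1}$ under $\Phi_d$ (versus $1$ under $\Phi_{\tilde c}$) reflects the geometric asymmetry between the two fold arcs inherited from the original $f$. Once the identifications are pinned down, the realization step in part (2) is immediate from Theorem \ref{keythm_monodromyalonggamma}.
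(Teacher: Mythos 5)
Your overall strategy is the same as the paper's, and your part (2) is essentially the paper's argument: one observes that any two elements of $\MCG{(\Sigma_{g+1})}(\tilde{c},d)$ satisfying $C_3$ differ by an element of $\Ker{\Phi_{\tilde{c}}}\cap\Ker{\Phi_d}$, and Theorem \ref{keythm_monodromyalonggamma} realizes any such element by modifying the slip (a variant of the $R_2$-move) without disturbing $\tilde{c}$, $d$ or $\tilde{\alpha}$; the genus hypotheses are exactly those of that theorem. This half of your proposal is correct.

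In part (1), however, there are two genuine gaps. First, the cusp relations only constrain geometric intersection numbers of consecutive vanishing cycles; they cannot \emph{force} $e_2=\varphi^{-1}(\tilde{\alpha})$ (nor $e_4=\tilde{\alpha}$). The correct mechanism, which is what the paper's proof supplies via the carefully chosen horizontal distribution $\mathcal{H}$, is that $e_2$ is the fold vanishing cycle $\tilde{\alpha}^{(1)}$ sitting in the fiber over a point $q_1$ in the \emph{second} highest-genus region, transported back to the reference fiber over $q_0$ through the middle annulus; under the two identifications of these fibers with $\Sigma_{g+1}$ this transport is precisely $\varphi^{-1}$, where $\varphi$ is the transition element $[\theta_2^{-1}\circ\theta_3\circ\theta_1]$. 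Second, and more seriously, you assert that condition $C_3$ ``then follows'' from a comparison with $\varphi_0$, but the two halves of $C_3$ need different arguments. The identity $\Phi_d(\varphi)=\varphi_0^{-1}$ does come from recognizing $\Phi_d(\varphi)$ as the monodromy along a loop isotopic to $\partial D^2$ with reversed orientation. But $\Phi_{\tilde{c}}(\varphi)=1$ is proved in the paper by first applying unsink to trade the cusps for Lefschetz singularities, computing the monodromy along a modified path $\tilde{\gamma}_2^\prime$ as the conjugate $(t_{t_d(\tilde{\alpha})}\cdot t_{t_{\tilde{\alpha}}(\tilde{c})})\cdot\varphi\cdot(t_{t_d(\tilde{\alpha})}\cdot t_{t_{\tilde{\alpha}}(\tilde{c})})^{-1}$, and then pushing that path out of the higher-genus region, where it becomes null-homotopic in the complement of the singular image. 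Without an argument of this kind the condition $\Phi_{\tilde{c}}(\varphi)=1$ is unjustified---and it is exactly this condition that makes $C_3$ sharp enough for the reduction in part (2) to go through, since otherwise the ``residual adjustment'' need not lie in $\Ker{\Phi_{\tilde{c}}}\cap\Ker{\Phi_d}$.
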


\begin{proof}[Proof of $\mathrm{(1)}$ of Theorem \ref{mainalgorithm}]

For a fibration $\tilde{f}:M\rightarrow D^2$, we take points $q_0,q_0^\prime, q_0^{\prime\prime}, q_1,q_1^\prime, q_1^{\prime\prime}\in D^2$ and paths $\tilde{\gamma}_0, \tilde{\gamma}_1,\tilde{\gamma}_2, \delta_0, \delta_1\subset D^2$ as in Figure \ref{afterflips_algorithm}. 

\begin{figure}[htbp]
\begin{center}
\includegraphics[width=100mm]{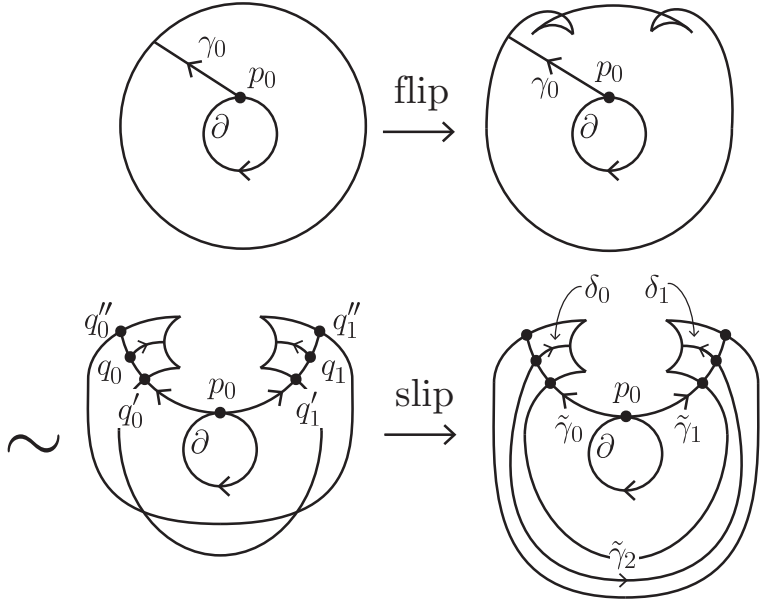}
\end{center}
\caption{the points $q_0, q_1$ is in the region with the highest genus fibers, while the points $q_0^\prime, q_0^{\prime\prime}, q_1^\prime, q_1^{\prime\prime}$ is on the image of singular locus. 
the path $\tilde{\gamma}_0$ connects $p_0$ to $q_0^{\prime\prime}$ and the path $\delta_0$ connects $q_0$ to a point in the image of singular locus. $\tilde{\gamma}_1$ and $\delta_1$ are taken similarly. The path $\tilde{\gamma}_2$ connects $q_0$ to $q_1$. 
Note that we regard $D^2$ as the subset of $S^2=\mathbb{R}^2\cup \{\infty\}$ and that $\infty\in D^2$. }
\label{afterflips_algorithm}
\end{figure}

We take an identification between the region $\Omega\subset D^2$ described in Figure \ref{regionOmega_algorithm} and the rectangle $I\times I$ so that the paths $\tilde{\gamma}_0, \tilde{\gamma}_1$ is contained in the side edges of the rectangle, the path $\tilde{\gamma}_2$ corresponds to the center horizontal line, and the image of singular loci correspond to horizontal lines (see the right side of Figure \ref{regionOmega_algorithm}). 
For each $x\in\tilde{\gamma}_2$, we denote by $u_x$ (resp. $l_x$) the vertical path which connects $x$ to the upper (resp. lower) singular image as in the right side of Figure \ref{regionOmega_algorithm}. 

\begin{figure}[htbp]
\begin{center}
\includegraphics[width=120mm]{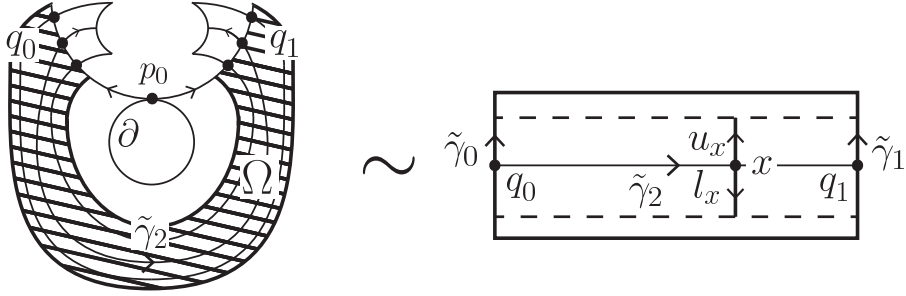}
\end{center}
\caption{the shaded region in the left figure is the region $\Omega$. 
The horizontal line with arrow in the right figure describes the path $\tilde{\gamma}_2$, while the horizontal dotted lines describe images of the singular loci. }
\label{regionOmega_algorithm}
\end{figure}

We take a horizontal distribution $\mathcal{H}$ of $\tilde{f}|_{M\setminus\mathcal{S}_{\tilde{f}}}$ so that it satisfies the following conditions: 

\begin{enumerate}

\item let $w_1^{(i)},w_2^{(i)}$ be points in $\tilde{f}^{-1}(p_0)$ which converges to an indefinite fold when $\tilde{f}^{-1}(p_0)$ approaches the singular fiber $\tilde{f}^{-1}(q_i^\prime)$ along $\tilde{\gamma}_i$ using $\mathcal{H}$. 
The set $\{w_1^{(0)},w_2^{(0)}\}$ corresponds to the set $\{w_1^{(1)},w_2^{(1)}\}$, 

\item let $d^{(i)}$ (resp. $\tilde{c}^{(i)}$) be simple closed curves in $\tilde{f}^{-1}(q_i)$ which converges to an indefinite fold when $\tilde{f}^{-1}(q_i)$ approaches the singular fiber $\tilde{f}^{-1}(q_i^{\prime})$ (resp. $\tilde{f}^{-1}(q_i^{\prime\prime})$) along $\tilde{\gamma}_i$ using $\mathcal{H}$. 
For each $i=0,1$, $d^{(i)}$ is disjoint from $\tilde{c}^{(i)}$, 

\item we obtain a diffeomorphism $j_i: \tilde{f}^{-1}(p_0)\setminus \{w_1,w_2\}\rightarrow \tilde{f}^{-1}(q_i)\setminus d^{(i)}$ by using horizontal lift of the curve $\tilde{\gamma}_i$. 
By the condition (2), $j_i^{-1}(\tilde{c}^{(i)})$ is a simple closed curve in $\tilde{f}^{-1}(p_0)$. 
$j_0^{-1}(\tilde{c}^{(0)})$ corresponds to $j_1^{-1}(\tilde{c}^{(1)})$ and these curves are equal to $c$, 

\item let $\tilde{\alpha}^{(i)}$ be a simple closed curve in $\tilde{f}^{-1}(q_i)$ which converges to an indefinite fold when $\tilde{f}^{-1}(p_0)$ approaches a singular fiber along $\delta_i$ using $\mathcal{H}$. 
$\tilde{\alpha}^{(i)}$ intersects both of the curves $\tilde{c}^{(i)}$ and $d^{(i)}$ transversely, 

\item $\sharp(\tilde{\alpha}^{(i)}\cap d^{(i)})=\sharp(\tilde{c}^{(i)}\cap \tilde{\alpha}^{(i)})=1$, 

\item by the conditions (4) and (5), the closure of $j_i^{-1}(\tilde{\alpha}^{(i)}\setminus d^{(i)})$ is a segment between $w_1^{(i)}$ and $w_2^{(i)}$. 
The closure of $j_1^{-1}(\tilde{\alpha}^{(0)}\setminus d^{(0)})$ corresponds to that of $j_2^{-1}(\tilde{\alpha}^{(1)}\setminus d^{(1)})$, 

\item since the path $\tilde{\gamma}_2$ does not contain the critical value of $\tilde{f}$, this path, together with $\mathcal{H}$, gives a diffeomorphism from $\tilde{f}^{-1}(q_0)$ to $\tilde{f}^{-1}(x)$ for each $x\in\tilde{\gamma}_2$. 
This diffeomorphism sends the curve $d^{(0)}$ (resp. $\tilde{c}^{(0)}$) to the curve $d_x$ (resp. $\tilde{c}_x$), where $d_x$ (resp. $\tilde{c}_x$) is a simple closed curve in $\tilde{f}^{-1}(x)$ which converges to an indefinite fold when $\tilde{f}^{-1}(x)$ approaches a singular fiber along $u_x$ (resp. $l_x$) using $\mathcal{H}$. 

\end{enumerate}

We choose indices of $w_1^{(i)}$ and $w_2^{(i)}$ so that $w_1^{(0)}$ corresponds to $w_1^{(1)}$. 
We put $w_i=w_i^{(0)}=w_i^{(1)}$. 
We denote by $\alpha$ the closure of $j_0^{-1}(\tilde{\alpha}^{(0)}\setminus d^{(0)})$ (which corresponds to the closure of $j_1^{-1}(\tilde{\alpha}^{(1)}\setminus d^{(1)})$).  
Since we fixed an identification $\tilde{f}^{-1}(p_0)\cong \Sigma_g$, we can regard $w_1,w_2$ as points in $\Sigma_g$. 
We can also regard $\alpha$ as a segment in $\Sigma_g$ between $w_1$ and $w_2$. 
We choose an identification $\Sigma_g\setminus \{w_1,w_2\} \cong \Sigma_{g+1}\setminus d$, where $d\subset \Sigma_{g+1}$ is a non-separating simple closed curve, so that the induced identification between $\Sigma_{g+1}\setminus d$ and $\tilde{f}^{-1}(q_i)\setminus d^{(i)}$ can be extended to an identification between $\Sigma_{g+1}$ and $\tilde{f}^{-1}(q_i)$ (to take such an identification, we modify $\mathcal{H}$ if necessary). 
By using this identification, we can regard $\tilde{c}^{(i)}$ as a curve in $\Sigma_{g+1}$, which we denote by $\tilde{c}$. 
We denote the identification between $\Sigma_{g+1}$ and $\tilde{f}^{-1}(q_i)$ as follows: 
\[
\theta_i: \Sigma_{g+1} \xrightarrow{\cong} \tilde{f}^{-1}(q_i) \hspace{.8em} (i=0,1). 
\]
On the other hand, we obtain a diffeomorphism between $\tilde{f}^{-1}(q_0)$ and $\tilde{f}^{-1}(q_1)$ by taking horizontal lifts of $\tilde{\gamma}_2$ using $\mathcal{H}$. 
We denote this diffeomorphism as follows: 
\[
\theta_3: \hat{f}^{-1}(q_0)\xrightarrow{\cong} \hat{f}^{-1}(q_1). 
\]
By the condition (7) on $\mathcal{H}$, the diffeomorphism sends $d^{(0)}$ (resp. $\tilde{c}^{(0)}$) to the curve $d^{(1)}$ (resp. $\tilde{c}^{(1)}$). 
Thus, the isotopy class $[\theta_2^{-1}\circ \theta_3\circ \theta_1]$ is contained in the subgroup $\MCG{(\Sigma_{g+1})}(\tilde{c},d)$ of the mapping class group $\mathcal{M}_{g+1}$. 
We denote this class by $\varphi\in \MCG{(\Sigma_{g+1})}(\tilde{c},d)$. 

We denote by $\tilde{\gamma}_2\cdot \delta_1$ be the path in $D^2$, starting at the point $q_0$, obtained by connecting $\tilde{\gamma}_2$ to $\delta_1$. 
This path gives the fiber $\tilde{f}^{-1}(q_0)$ a vanishing cycle of $\tilde{f}$. 
This vanishing cycle is equal to the curve $\theta_3^{-1} (\tilde{\alpha}^{(1)}) = \theta_3^{-1} \circ \theta_2 (\tilde{\alpha})$. 
This curve corresponds to the curve $\theta_1^{-1} \circ \theta_3^{-1} \circ \theta_2 (\tilde{\alpha}) = \varphi^{-1}(\tilde{\alpha}) \subset \Sigma_{g+1}$ under the identification $\theta_1$. 
Thus, the proof is completed once we prove the following lemma. 

\begin{lem}\label{lem_propertyvarphi_algorithm}

$\Phi_{\tilde{c}}(\varphi)=1$ and $\Phi_d(\varphi)=\varphi_0^{-1}$.

\end{lem}

\begin{proof}[Proof of Lemma \ref{lem_propertyvarphi_algorithm}]

The image $\Phi_{d}(\varphi)$ is equal to the monodromy along the curve $\delta_h$ described in the left side of Figure \ref{referencepath_imagePhi_algorithm}, which corresponds to $\varphi_0^{-1}$.  
Thus, we have $\Phi_{d}(\varphi)=\varphi_0^{-1}$. 
To prove $\Phi_{\tilde{c}}(\varphi)=1$, we consider the fibration obtained by applying unsink to $\tilde{f}$. 
We take the path $\tilde{\gamma}_2^\prime$ connecting $q_0$ to $q_1$ as in the right side of Figure \ref{referencepath_imagePhi_algorithm}. 
It is easy to see that the monodromy along this path corresponds to $(t_{t_d(\tilde{\alpha})}\cdot t_{t_{\tilde{\alpha}}(\tilde{c})})\cdot \varphi \cdot (t_{t_d(\tilde{\alpha})}\cdot t_{t_{\tilde{\alpha}}(\tilde{c})})^{-1}$. 
This preserves the curve $d$ and the image $\Phi_{d}((t_{t_d(\tilde{\alpha})}\cdot t_{t_{\tilde{\alpha}}(\tilde{c})})\cdot \varphi \cdot (t_{t_d(\tilde{\alpha})}\cdot t_{t_{\tilde{\alpha}}(\tilde{c})})^{-1})$ is trivial since this element is the monodromy along the curve obtained by pushing the curve $\tilde{\gamma}_2^\prime$ out of the region with the higher genus fibers, which is null-homotopic in the complement of the image of the singular loci. 
We can obtain the element $\Phi_{\tilde{c}}(\varphi)$ by taking some conjugation of $\Phi_{d}((t_{t_d(\tilde{\alpha})}\cdot t_{t_{\tilde{\alpha}}(\tilde{c})})\cdot \varphi \cdot (t_{t_d(\tilde{\alpha})}\cdot t_{t_{\tilde{\alpha}}(\tilde{c})})^{-1})$. 
In particular, $\Phi_{\tilde{c}}(\varphi)$ is also trivial and this completes the proof of Lemma \ref{lem_propertyvarphi_algorithm}.  

\end{proof}

\begin{figure}[htbp]
\begin{center}
\includegraphics[width=100mm]{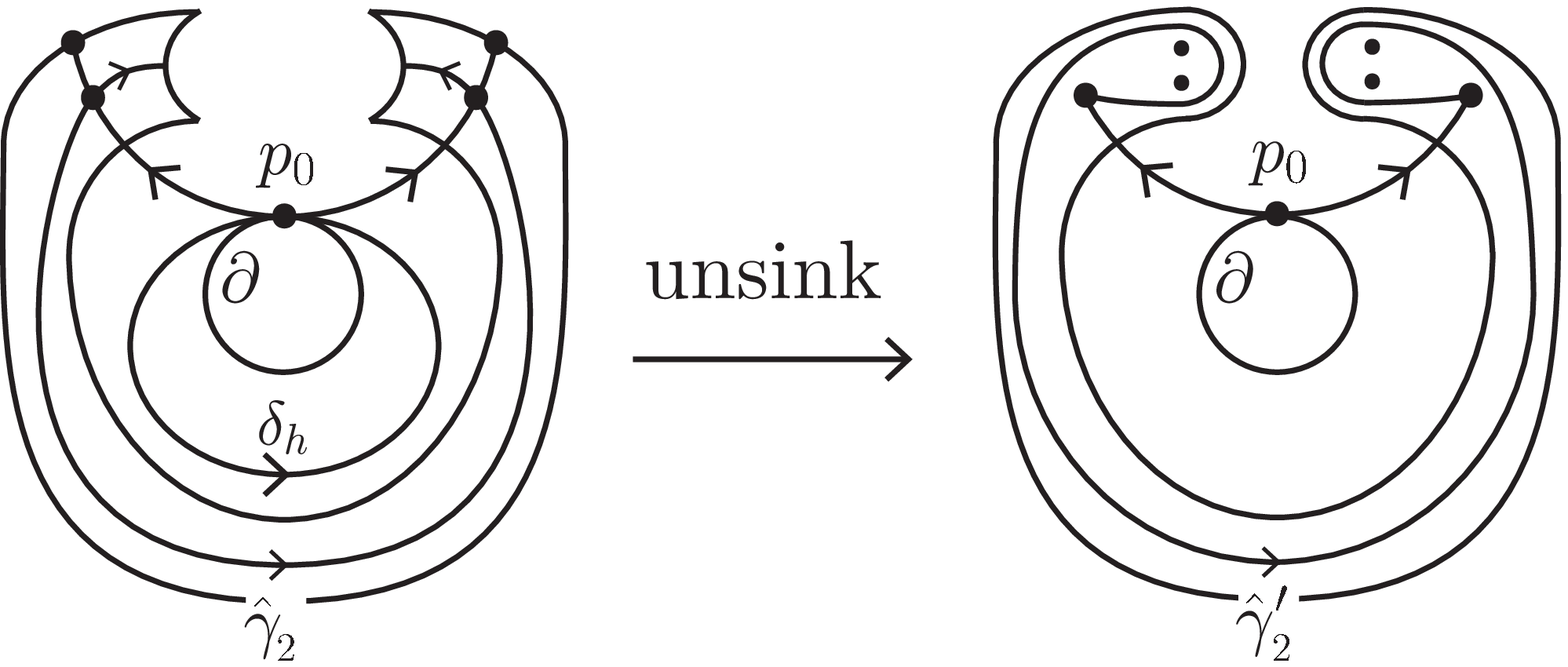}
\end{center}
\caption{}
\label{referencepath_imagePhi_algorithm}
\end{figure}

As I mentioned before the proof of Lemma \ref{lem_propertyvarphi_algorithm}, we complete the proof of (1) of Theorem \ref{mainalgorithm}. 

\end{proof}

\begin{proof}[Proof of $\mathrm{(2)}$ of Theorem \ref{mainalgorithm}]

In the proof of (1) of Theorem \ref{mainalgorithm}, we take a horizontal distribution of $\tilde{f}|_{M\setminus \mathcal{S}_{\tilde{f}}}$ and an identification $\Sigma_{g}\setminus \{w_1,w_2\}\cong \Sigma_{g+1}\setminus d$. 
Once we take these auxiliary data, we can get vanishing cycles of $\tilde{f}$ in canonical way. 
We first take a horizontal distribution of $\tilde{f}|_{M\setminus \mathcal{S}_{\tilde{f}}}$ so that an embedded path $\alpha\subset \Sigma_{g}$ determined by the distribution corresponds to the given one. 
We next take an identification $\Sigma_{g}\setminus \{w_1,w_2\}\cong \Sigma_{g+1}\setminus d$ by using the given $d,j$. 
The element $[\theta_2^{-1}\circ \theta_3\circ \theta_1]$, which appear in the proof of (1) of Theorem \ref{mainalgorithm}, is canonically determined by the chosen horizontal distribution of $\tilde{f}|_{M\setminus \mathcal{S}_{\tilde{f}}}$ of $M$ and the chosen homotopy from $f$. 

Let $\Omega$ be the region in $D^2$ as in Figure \ref{regionOmega_algorithm}. 
We take an identification $\Omega\cong I\times \tilde{\gamma}_2$. 
We also take a diffeomorphism $\Theta:\tilde{f}^{-1}(\tilde{\gamma}_0\cap \Omega)\rightarrow \tilde{f}^{-1}(\tilde{\gamma}_1\cap \Omega)$ so that it satisfies $\tilde{f}\circ \Theta= i\circ \tilde{f}$, where $i: I\times \{q_0\}\ni (t,q_0)\mapsto (t,q_1)\in I\times \{q_1\}$, and that a $4$-manifold $\tilde{f}^{-1}(\Omega)/\Theta$ is the trivial $N$-bundle over $S^1$, where $N$ is a $3$-manifold defined in Section \ref{sec_BLFoverannulus}. 
For any two elements $\varphi_1,\varphi_2\in \MCG{(\Sigma_{g+1},\tilde{c},d)}$ satisfying the condition $C_3$, the element $\varphi_1 \cdot \varphi_2^{-1}$ is contained in the group $\Ker{\Phi_{\tilde{c}}}\cap \Ker{\Phi_d}$. 
Thus, Theorem \ref{keythm_monodromyalonggamma} implies that we can change $f$ into $\tilde{f}$ by a flip and slip move so that the resulting element $[\theta_2^{-1}\circ \theta_3\circ \theta_1]$ corresponds to $\varphi^{-1}\in\MCG{(\Sigma_{g+1})}(\tilde{c},d)$ for the given $\varphi$. 
This completes the proof of (2) of Theorem \ref{mainalgorithm}. 

\end{proof}

We next consider the case that $f$ has cusps. 
We denote by $\{s_1,\ldots, s_n\}$ the set of cusps of $f$. 
We put $u_i=f(s_i)$. 
The indices of $s_i$ are chosen so that $u_1,\ldots, u_n$ appear in this order when we travel the image $f(\mathcal{S}_f)$ clockwise around a point inside $f(\mathcal{S}_f)$. 
The points $u_1,\ldots, u_n$ divides the image $f(\mathcal{S}_f)$ into $n$ edges. 
We denote by $l_i\subset f(\mathcal{S}_f)$ the edge between $u_i$ and $u_{i+1}$, where we put $u_{n+1}=u_1$. 
For a point $p_0\in\partial D^2$, we take reference paths $\gamma_1,\ldots, \gamma_n\subset D^2$ satisfying the following conditions: 

\begin{itemize}

\item $\gamma_i$ connects $p_0$ to a point in $\Int{l_i}$, 

\item $\gamma_i\cap \gamma_j=\{p_0\}$ for all $i\neq j$, 

\item $\Int{\gamma_i}\cap f(\mathcal{S}_f)=\emptyset$, 

\item $\gamma_1,\ldots, \gamma_n$ appear in this order when we go around $p_0$ counterclockwise. 

\end{itemize}

Let $\gamma_{n+1}$ be a path obtained by connecting $\partial D^2$ oriented clockwise around the center of $D^2$ to $\gamma_1$. 
The paths give $f^{-1}(p_0)\cong \Sigma_{g}$ vanishing cycles $c_1,\ldots, c_{n+1}$. 
Note that, for each $i\in \{1,\ldots, n\}$, $c_{i}$ intersects $c_{i+1}$ at a unique point transversely. 
In particular, every simple closed curve $c_i$ is non-separating. 
We also remark that $c_{n+1}$ corresponds to $\varphi_0(c_{1})$. 

Let $\hat{f}: M\rightarrow D^2$ be a fibration obtained by changing all the cusp singularities of $f$ into Lefschetz singularities by applying unsink to $f$ $n$ times.  
We take paths $\varepsilon_1,\ldots, \varepsilon_n$ in $D^2$ satisfying the following conditions: 

\begin{itemize}

\item $\varepsilon_i$ connects $p_0$ to the image of a Lefschetz singularity derived from $s_i$, 

\item $\varepsilon_i\cap \varepsilon_j=\{p_0\}$ for all $i\neq j$, 

\item $\Int{\varepsilon_i}\cap \hat{f}(\mathcal{S}_{\hat{f}})=\emptyset$, 

\item $\gamma_1,\varepsilon_1, \gamma_2, \ldots, \gamma_n, \varepsilon_n,\gamma_{n+1}$ appear in this order when we go around $p_0$ counterclockwise. 

\end{itemize}

The path $\varepsilon_i$ gives a vanishing cycle of a Lefschetz singularity of $\hat{f}$, which corresponds to the curve $t_{c_i}(c_{i+1})$. 
Let $\gamma_0$ be a based loop in $D^2\setminus \hat{f}(\mathcal{S}_{\hat{f}})$ with base point $p_0$ which is homotopic to the loop obtained by connecting $p_0$ to $\hat{f}(\mathcal{S}_{\hat{f}})$ oriented counterclockwise around a point inside $f(\mathcal{S}_f)$ using $\gamma_1$. 
It is easy to see that the monodromy along $\gamma_0$ corresponds to the following element: 
\[
\hat{\varphi}_0 = \varphi_0 \cdot (t_{t_{c_1}(c_2)} \cdot \cdots \cdot t_{t_{c_n}(c_{n+1})})^{-1}. 
\]
This element preserves the curve $c_1$ and is contained in the kernel of the homomorphism $\Phi_{c_1}$. 

Since application of flip and slip to $f$ is equivalent to application of flip and slip to $\hat{f}$ followed by application of sink $n$ times, we can obtain vanishing cycles of a fibration obtained by applying flip and slip to $f$ in the way quite similar to that in the case $f$ has no cusps. 
In order to give the precise algorithm to obtain vanishing cycles, we prepare several conditions. 

\vspace{.8em}

\noindent
{\it Condition $\tilde{C}_1(c_1,\ldots,c_n)$}: 
A path $\alpha\subset \Sigma_g$ intersects $c_1$ at the unique point $q\in c_1$ transversely.  
Furthermore, $\partial \alpha \cap (c_1\cup \cdots \cup c_{n+1})=\emptyset$. 

\vspace{.8em}

We take a path $\alpha\subset \Sigma_g$ so that $\alpha$ satisfies the condition $\tilde{C}_1(c_1,\ldots,c_n)$. 
We put $\partial \alpha =\{w_1,w_2\}$. 
The second condition is on a simple closed curve $d\subset \Sigma_{g+1}$ and a diffeomorphism $j:\Sigma_g\setminus \{w_1,w_2\} \rightarrow \Sigma_{g+1}\setminus d $. 

\vspace{.8em}

\noindent
{\it Condition $\tilde{C}_2(c_1,\ldots,c_n,\alpha)$}: 
the closure of $j(\Int{\alpha})$ in $\Sigma_{g+1}$ is a simple closed curve.  

\vspace{.8em}

We take a simple closed curve $d\subset \Sigma_{g+1}$ and a diffeomorphism $j:\Sigma_g\setminus \{w_1,w_2\} \rightarrow \Sigma_{g+1}\setminus d $ so that they satisfy the condition $\tilde{C}_2(c_1,\ldots,c_n,\alpha)$. 
We put $\tilde{c}_1=j(c_1)$. 
The third condition is on an element $\varphi\in \MCG{(\Sigma_{g+1})} (\tilde{c}_1,d)$. 

\vspace{.8em}
\noindent
{\it Condition $\tilde{C}_3(c_1,\ldots,c_n,\alpha,d,j, \varphi_0)$}: 
$\Phi_{\tilde{c}_1}(\varphi)=1$ in $\MCG(\Sigma_g)(d)$ and $\Phi_d (\varphi) =\hat{\varphi}_0^{-1}$ in $\MCG(\Sigma_g)(c_1)$. 

\vspace{.8em}
 
The last condition is on simple closed curves $\tilde{c}_2,\ldots,\tilde{c}_{n+1}\subset \Sigma_{g+1}\setminus d$. 

\vspace{.8em}
\noindent
{\it Condition $\tilde{C}_4(c_1,\ldots,c_n,\alpha,d,j)$}: 
For each $i\in\{2,\ldots,n+1\}$, $i(\tilde{c}_i)$ is isotopic to $c_i$ in $\Sigma_g$, where $i$ is an embedding defined as follows: 
\[
i: \Sigma_{g+1}\setminus d \xrightarrow{\hspace{.2em}j^{-1}\hspace{.2em}} \Sigma_g\setminus \{w_1,w_2\} \hookrightarrow \Sigma_g. 
\]
Furthermore, for each $i=1,\ldots,n$, $\tilde{c}_i$ intersects $\tilde{c}_{i+1}$ at a unique point transversely. 

\vspace{.8em} 

As the case $f$ has no cusps, we will call the above conditions $\tilde{C}_1, \tilde{C}_2, \tilde{C}_3$ and $\tilde{C}_4$ if elements $c_1,\ldots,c_n, \alpha,d,j$ and $\varphi_0$ are obvious. 
We can prove the following theorem by the argument similar to that in the proof of Theorem \ref{mainalgorithm}. 

\begin{thm}\label{mainalgorithmwithcusps}

Let $f:M\rightarrow D^2$ be a purely wrinkled fibration we took in the beginning of this section. 
Suppose that $f$ has $n>0$ cusps. 
We take vanishing cycles $c_1,\ldots, c_{n+1}$ as above. 

\begin{enumerate}

\item[$\mathrm{(1)}$] Let $\tilde{f}$ be a fibration obtained by applying flip and slip to $f$. 
We take a point $q_0$ in the inside of $f(\mathcal{S}_{\tilde{f}})$, and reference paths $\hat{\gamma}_1,\ldots, \hat{\gamma}_{n+4}$ in $D^2$ connecting $q_0$ to a point on the respective fold arcs between cusps so that these paths appear in this order when we go around $q_0$ counterclockwise. 
We denote by $e_i\subset \tilde{f}^{-1}(q_0)$ a vanishing cycle determined by the path $\hat{\gamma}_i$. 
Then, there exist an identification $\tilde{f}^{-1}(q_0)\cong \Sigma_{g+1}$ and elements $\alpha, d, j, \tilde{c}_2,\ldots, \tilde{c}_{n+1}$ and $\varphi$ satisfying the conditions $\tilde{C}_1, \tilde{C}_2, \tilde{C}_3$ and $\tilde{C}_4$ such that the following equality holds up to cyclic permutation: 
\[
(e_1,\ldots ,e_{n+4}) = (\tilde{c}_1,\ldots , \tilde{c}_{n+1}, \alpha^\prime, d, \tilde{\alpha}), 
\] 
where $\tilde{c}_1=j(c_1)$, $\tilde{\alpha}$ is the closure of $j(\Int{\alpha})$ in $\Sigma_{g+1}$, and $\alpha^\prime$ is defined as follows: 
\[
\alpha^\prime =\left(\varphi^{-1}\cdot t_{t_{\tilde{c}_1}(\tilde{c}_2)}\cdot \cdots\cdot t_{t_{\tilde{c}_n}(\tilde{c}_{n+1})}\right)(\tilde{\alpha}). 
\]

\item[$\mathrm{(2)}$] Let $\alpha, d, j, \tilde{c}_2,\ldots, \tilde{c}_{n+1}$ and $\varphi$ be elements satisfying the conditions $\tilde{C}_1,\tilde{C}_2,\tilde{C}_3$ and $\tilde{C}_4$. 
We take simple closed curves $\tilde{c}_1, \tilde{\alpha}$ and $\alpha^\prime$ as in $\mathrm{(1)}$. 
Suppose that the genus of higher genus fibers $g$ of $f$ is greater than or equal to $3$. 
Then, there exists a fibration $\tilde{f}$ obtained by applying flip and slip to $f$ such that, for reference paths $\hat{\gamma}_1,\ldots, \hat{\gamma}_{n+4}$ as in $\mathrm{(1)}$, the corresponding vanishing cycles $e_1,\ldots, e_{n+4}$ satisfy the following equality up to cyclic permutation: 
\[
(e_1,\ldots ,e_{n+4}) = (\tilde{c}_1,\ldots , \tilde{c}_{n+1}, \alpha^\prime, d, \tilde{\alpha}). 
\]

\end{enumerate}

\end{thm}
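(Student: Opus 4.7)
The plan is to reduce to the cusp-free Theorem \ref{mainalgorithm} via the equivalence noted just before the theorem statement: a flip-and-slip applied to $f$ coincides with the composite ``unsink the $n$ cusps to produce $\hat{f}$, apply a flip-and-slip to the fold circle of $\hat{f}$, then sink the $n$ surviving Lefschetz singularities back to cusps.'' Under this decomposition, the monodromy of $\hat{f}$ measured along a loop that avoids the Lefschetz singularities and encircles only the fold circle is precisely
\[
\hat{\varphi}_0 = \varphi_0\cdot \bigl(t_{t_{c_1}(c_2)}\cdots t_{t_{c_n}(c_{n+1})}\bigr)^{-1},
\]
so the argument of Theorem \ref{mainalgorithm} applies verbatim to the fold part of $\hat{f}$ with $\hat{\varphi}_0$ in place of $\varphi_0$; the $n$ Lefschetz singularities of $\hat{f}$ play the role of bystanders sitting outside the local flip-and-slip region.

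For part (1), starting from $\tilde{f}$ I unsink the $n$ cusps of $\tilde{f}$ that descend from the original cusps of $f$ to form an intermediate fibration $\hat{\tilde{f}}$, which has the four cusps produced by the slip together with $n$ Lefschetz singularities and is obtained from $\hat{f}$ by a single flip-and-slip. The argument of Theorem \ref{mainalgorithm}(1) applied to the fold part of $\hat{f}$ then yields an identification $\tilde{f}^{-1}(q_0)\cong \Sigma_{g+1}$ together with data $\alpha,d,j,\varphi$ satisfying the analogues of $C_1,C_2,C_3$ with $\hat{\varphi}_0$ replacing $\varphi_0$; these are precisely conditions $\tilde{C}_1,\tilde{C}_2,\tilde{C}_3$. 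The Lefschetz vanishing cycles $t_{c_i}(c_{i+1})\subset \Sigma_g$ of $\hat{f}$ are transported into $\Sigma_{g+1}$ by the same horizontal distribution that defines $j$, producing curves of the form $t_{\tilde{c}_i}(\tilde{c}_{i+1})$ for simple closed curves $\tilde{c}_i\in\Sigma_{g+1}\setminus d$ whose images in $\Sigma_g$ under the inclusion appearing in $\tilde{C}_4$ are isotopic to $c_i$, which establishes $\tilde{C}_4$. Sinking the $n$ Lefschetz singularities back to cusps converts $\hat{\tilde{f}}$ to $\tilde{f}$ without altering fold vanishing cycles, so that reading counterclockwise from $q_0$ yields $(\tilde{c}_1,\ldots,\tilde{c}_{n+1},\alpha',d,\tilde{\alpha})$. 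The product $t_{t_{\tilde{c}_1}(\tilde{c}_2)}\cdots t_{t_{\tilde{c}_n}(\tilde{c}_{n+1})}$ appearing in the formula for $\alpha'$ arises because the reference path $\hat{\gamma}_{n+2}$ used in $\tilde{f}$ runs from $q_0$ past the $n+1$ old fold arcs without meeting any singularity, whereas its lift to $\hat{\tilde{f}}$ must detour around each of the $n$ reintroduced Lefschetz singularities in turn, each detour contributing exactly one conjugation by the corresponding Lefschetz vanishing cycle.

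For part (2), I reverse the procedure. Given data satisfying $\tilde{C}_1$--$\tilde{C}_4$, first unsink $f$ to obtain $\hat{f}$, and then invoke Theorem \ref{mainalgorithm}(2) (with $\hat{\varphi}_0$ in place of $\varphi_0$) to construct a flip-and-slip $\hat{\tilde{f}}$ of $\hat{f}$ realising the prescribed $\alpha,d,j,\varphi$. The hypothesis $g\geq 3$ feeds into this step through Theorem \ref{keythm_monodromyalonggamma}, which is what allows the monodromy alteration required by condition $\tilde{C}_3$ to be realised by a sequence of $R_2$-moves inside the slip region. Finally, sink the $n$ surviving Lefschetz singularities back to cusps; condition $\tilde{C}_4$ guarantees that the prescribed curves $\tilde{c}_2,\ldots,\tilde{c}_{n+1}$ agree with the fold vanishing cycles of $\hat{\tilde{f}}$ adjacent to the Lefschetz singularities, so that the sink operations are valid and the resulting fibration $\tilde{f}$ has the required vanishing cycles in the claimed cyclic order.

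The main obstacle is the precise bookkeeping behind the Dehn-twist formula defining $\alpha'$: one must verify that lifting the reference path $\hat{\gamma}_{n+2}$ from $\tilde{f}$ through the $n$ sink operations back to $\hat{\tilde{f}}$ produces exactly the product $t_{t_{\tilde{c}_1}(\tilde{c}_2)}\cdots t_{t_{\tilde{c}_n}(\tilde{c}_{n+1})}$, in this order and with no spurious factors. This requires a careful analysis of the cyclic ordering of the cusps of $\tilde{f}$, of the positions of the sink regions relative to the flip-and-slip region, and of the orientation with which each Lefschetz singularity of $\hat{\tilde{f}}$ is circumnavigated by the lifted reference path. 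Once this bookkeeping is pinned down, the remaining verifications are essentially identical to those in the proofs of parts (1) and (2) of Theorem \ref{mainalgorithm}.
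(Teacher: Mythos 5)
Your proposal follows essentially the same route as the paper: the paper itself gives no separate proof of this theorem, but in the paragraphs preceding it sets up exactly your reduction — unsink the $n$ cusps to Lefschetz singularities with vanishing cycles $t_{c_i}(c_{i+1})$, replace $\varphi_0$ by $\hat{\varphi}_0=\varphi_0\cdot(t_{t_{c_1}(c_2)}\cdots t_{t_{c_n}(c_{n+1})})^{-1}$, run the cusp-free argument of Theorem \ref{mainalgorithm} on the fold circle of $\hat{f}$, and sink back — and then states that the theorem follows "by the argument similar to that in the proof of Theorem \ref{mainalgorithm}." Your elaboration of where the extra Dehn-twist product in $\alpha'$ comes from (sliding the reference path past the reintroduced Lefschetz critical values) is consistent with this, so the proposal is correct and matches the paper's intended argument.
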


\section{Fibrations with small fiber genera}\label{sec_algorithm_smallgenera}

Although the statement (1) of Theorem \ref{mainalgorithm} holds for a fibration with an arbitrary fiber genera, the statement (2) of Theorems \ref{mainalgorithm} and \ref{mainalgorithmwithcusps} do not hold if genera of fibers are too small. 
The main reason of this is non-triviality of the group $\pi_1(\Diff^+{\Sigma_{g-1}}, \text{id})$ when $g<3$. 
To deal with fibrations with small fiber genera, we need to look at additional data on sections of fibrations. 
Let $f:M\rightarrow D^2$ be a purely wrinkled fibration we took in the beginning of Section \ref{sec_mainalgorithm}. 

\subsection{Case 1: every fiber of $f$ is connected}

In this subsection, we assume that every fiber of $f$ is connected. 
We first consider the case $f$ has no cusps. 
We take a point $p_0$, an identification $f^{-1}(p_0)\cong \Sigma_g$, a reference path $\gamma_0\subset D^2$, a vanishing cycle $c\subset \Sigma_g$, and a monodromy $\varphi_0\in \MCG{(\Sigma_g)}(c)$ as we took in Section \ref{sec_mainalgorithm}. 
It is easy to see that $f$ has a section. 
We take a section $\sigma: D^2\rightarrow M$ of $f$. 
We put $x=\sigma(p_0)$, which is contained in the complement $\Sigma_g\setminus c$. 
This section gives a lift $\tilde{\varphi}\in \MCG{(\Sigma_g,x)}(c)$. 
It is easy to show that this element is contained in the kernel of the following homomorphism: 
\[
\Phi_c^{x}: \MCG{(\Sigma_g,x)}(c)\rightarrow \MCG{(\Sigma_{g-1},x)}, 
\]
which is defined as we define $\Phi_c$. 

As in Section \ref{sec_mainalgorithm}, we give several conditions. 
The first condition is on an embedded path $\alpha\subset \Sigma_g\setminus \{x\}$. 

\vspace{.8em}

\noindent
{\it Condition $C_1^\prime(c,\sigma)$}: 
A path $\alpha\subset \Sigma_g\setminus \{x\}$ intersects $c$ at the unique point $q\in c$ transversely. 

\vspace{.8em}

We take a path $\alpha\subset \Sigma_g\setminus \{x\}$ so that $\alpha$ satisfies the condition $C_1^\prime(c,\sigma)$. 
We put $\partial \alpha =\{w_1,w_2\}$. 
The second condition is on a simple closed curve $d\subset \Sigma_{g+1}$ and a diffeomorphism $j:\Sigma_g\setminus \{w_1,w_2\} \rightarrow \Sigma_{g+1}\setminus d $. 

\vspace{.8em}

\noindent
{\it Condition $C_2^\prime(c,\alpha,\sigma)$}: 
the closure of $j(\Int{\alpha})$ in $\Sigma_{g+1}$ is a simple closed curve.  

\vspace{.8em}

We take a simple closed curve $d\subset \Sigma_{g+1}$ and a diffeomorphism $j:\Sigma_g\setminus \{w_1,w_2\} \rightarrow \Sigma_{g+1}\setminus d $ so that they satisfy the condition $C_2^\prime(c,\alpha,\sigma)$. 
We put $\tilde{c}=j(c)$ and $\tilde{x}=j(x)$. 
The last condition is on an element $\varphi\in \MCG{(\Sigma_{g+1},\tilde{x})} (\tilde{c},d)$. 

\vspace{.8em}
\noindent
{\it Condition $C_3^\prime(c,\alpha,d,j, \varphi_0,\sigma)$}: 
$\Phi_{\tilde{c}}^{\tilde{x}}(\varphi)=1$ in $\MCG{(\Sigma_g,\tilde{x})}(d)$ and $\Phi_d^{\tilde{x}} (\varphi) =\tilde{\varphi}_0^{-1}$ in $\MCG{(\Sigma_g,x)}(c)$. 

\begin{thm}\label{mainalgorithmwithsection}

Let $f:M\rightarrow D^2$ be a purely wrinkled fibration as above. 

\begin{enumerate}

\item[$\mathrm{(1)}$] Let $\tilde{f}$ be a fibration obtained by applying flip and slip to $f$. 
We take a point $q_0$, reference paths $\hat{\gamma}_1,\ldots, \hat{\gamma}_4$ in $D^2$ and $e_i\subset \tilde{f}^{-1}(q_0)$ as in $\mathrm{(1)}$ of Theorem \ref{mainalgorithm}. 
Then, there exist an identification $\tilde{f}^{-1}(q_0)\cong \Sigma_{g+1}$ and elements $\alpha, d, j$ and $\varphi$ satisfying the conditions $C_1^\prime, C_2^\prime$ and $C_3^\prime$ such that the following equality holds up to cyclic permutation: 
\[
(e_1,e_2,e_3,e_4) = (\tilde{c}, \alpha^\prime, d, \tilde{\alpha}), 
\] 
where $\tilde{c}=j(c)$, $\tilde{\alpha}$ is the closure of $j(\Int{\alpha})$ in $\Sigma_{g+1}$, and $\alpha^\prime =\varphi^{-1}(\tilde{\alpha})$. 

\item[$\mathrm{(2)}$] Let $\alpha, d, j$ and $\varphi$ be elements satisfying the conditions $C_1^\prime, C_2^\prime$ and $C_3^\prime$. 
We take simple closed curves $\tilde{c}, \tilde{\alpha}$ and $\alpha^\prime$ as in $\mathrm{(1)}$. 
Suppose that the genus $g$ is greater than or equal to $2$. 
Then, there exists a fibration $\tilde{f}$ obtained by applying flip and slip to $f$ such that, for reference paths $\hat{\gamma}_1,\ldots,\hat{\gamma}_4$ as in $\mathrm{(1)}$, the corresponding vanishing cycles $e_1,\ldots, e_4$ satisfy the following equality up to cyclic permutation: 
\[
(e_1,e_2,e_3,e_4) = (\tilde{c}, \alpha^\prime, d, \tilde{\alpha}).  
\]

\end{enumerate}

\end{thm}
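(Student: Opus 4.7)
The plan is to mimic the proof of Theorem \ref{mainalgorithm} with every construction upgraded to its pointed version, where the section $\sigma$ supplies a distinguished marked point $x$ that is preserved throughout. The only technical ingredient that has to be reproved in the pointed setting is the pointed analog of Theorem \ref{keythm_monodromyalonggamma}, and this is precisely what allows the hypothesis $g\geq 2$ to replace $g\geq 3$.

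For part (1), I would repeat the argument of Theorem \ref{mainalgorithm}(1) verbatim, choosing all reference paths, horizontal distributions, and the identification $\tilde{f}^{-1}(q_0)\cong \Sigma_{g+1}$ so that they are compatible with $\sigma$: since $\sigma$ is a smooth embedding transverse to every fiber, I can pick $\tilde{x}\in \Sigma_{g+1}$ and arrange that each identification $\theta_i:\Sigma_{g+1}\to \tilde{f}^{-1}(q_i)$ sends $\tilde{x}$ to $\sigma(q_i)$, while the identification $f^{-1}(p_0)\cong \Sigma_g$ sends $\sigma(p_0)$ to $x=j^{-1}(\tilde{x})$. The horizontal distribution $\mathcal{H}$ on $M\setminus \mathcal{S}_{\tilde{f}}$ can be arranged to contain the image of $d\sigma$, so horizontal transport preserves the section pointwise. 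Consequently the element $\varphi=[\theta_2^{-1}\circ \theta_3\circ \theta_1]$ from the original proof lies in $\MCG{(\Sigma_{g+1},\tilde{x})}(\tilde{c},d)$, and the proof of Lemma \ref{lem_propertyvarphi_algorithm} carries over to give $\Phi_{\tilde{c}}^{\tilde{x}}(\varphi)=1$ and $\Phi_d^{\tilde{x}}(\varphi)=\tilde{\varphi}_0^{-1}$, verifying condition $C_3^\prime$. The description of the four vanishing cycles is unchanged.

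For part (2), the crucial input is a pointed version of Theorem \ref{keythm_monodromyalonggamma}: for any $\psi\in \Ker \Phi_{\tilde{c}}^{\tilde{x}}\cap \Ker \Phi_d^{\tilde{x}}$, one can realize $\psi$ as the monodromy along $\gamma$ via $R_2$-moves supported in the complement of a section. To prove this I would re-run Section \ref{sec_BLFoverannulus} in the pointed setting, replacing each $\Diff^+{(\Sigma_{g-1},v_1,v_2,w_1,w_2)}$ by $\Diff^+{(\Sigma_{g-1},x,v_1,v_2,w_1,w_2)}$ and using the evaluation map into the ordered configuration space of five points of $\Sigma_{g-1}$. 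The relevant fiber of Birman's locally trivial fibration becomes $\Diff_0^+{(\Sigma_{g-1},x)}$, which is contractible for every $g\geq 2$: for $g\geq 3$ this follows from contractibility of $\Diff_0^+{\Sigma_{g-1}}$, while for $g=2$ (so $\Sigma_{g-1}=T^2$) it is the classical fact that fixing one point on $T^2$ kills the translation $S^1\times S^1$ factor. Thus Lemma \ref{lem_intersection2} admits a pointed analog, generated by loops $l(\eta)$ based at configurations disjoint from $x$, and the same holds in the separating case provided $g_i\geq 1$. Combining the resulting pointed versions of Theorems \ref{keythm_intersection} and \ref{keythm_intersection_separating} with a pointed version of Lemma \ref{lem_monodromyalonggamma} (proved by the same horizontal-distribution argument, choosing $\mathcal{H}$ tangent to $\sigma$) yields the desired pointed Theorem \ref{keythm_monodromyalonggamma}. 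Once it is available, the construction of $\tilde{f}$ realizing a prescribed $\varphi$ proceeds exactly as in Theorem \ref{mainalgorithm}(2): perform any preliminary flip and slip, compute the resulting geometric monodromy $\varphi_{\mathrm{geom}}$, and then apply $R_2$-moves corresponding to $\varphi\cdot \varphi_{\mathrm{geom}}^{-1}\in \Ker \Phi_{\tilde{c}}^{\tilde{x}}\cap \Ker \Phi_d^{\tilde{x}}$.

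The main obstacle is checking that the entire family of $R_2$-move homotopies $h_{t,s}$ and $\tilde{f}_t$ from Section \ref{sec_BLFoverannulus} can be performed while keeping $\sigma$ pointwise fixed. This reduces to choosing each representative path $\eta\in \Pi(\Sigma_{g-1}\setminus\{v_i,w_j\},v_k,w_l)$ (and analogously in the separating case) to avoid the marked point $x$, which is possible since removing one more interior point from a connected surface does not obstruct the existence of embedded paths between prescribed boundary points. Granting this, the homotopies localize to tubular neighborhoods of horizontal arcs disjoint from $\sigma$, and the pointed Theorem \ref{keythm_monodromyalonggamma} delivers the required modification of $\varphi_{\mathrm{geom}}$ into $\varphi$.
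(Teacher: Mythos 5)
Your proposal is correct and follows essentially the same route as the paper: for (1) the paper likewise takes a horizontal distribution tangent to the image of $\sigma$ so that the whole argument of Theorem \ref{mainalgorithm}(1) carries over verbatim, and for (2) it likewise invokes the triviality of $\pi_1(\Diff^+{(\Sigma_{g-1},x)},\text{id})$ for $g\geq 2$ (citing Earle--Schatz) to rerun the Section \ref{sec_BLFoverannulus} argument in the pointed setting, obtaining generators $t_{\tilde{\delta}(\eta)}\cdot t_{\tilde{c}}^{-1}\cdot t_d^{-1}$ of $\Ker{\Phi_{\tilde{c}}^{\tilde{x}}}\cap\Ker{\Phi_d^{\tilde{x}}}$ indexed by paths $\eta$ avoiding $\tilde{x}$, and then adjusting $[\theta_2^{-1}\circ\theta_3\circ\theta_1]$ by $R_2$-moves. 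Your write-up actually supplies more detail than the paper's (quite terse) proof, and the extra checks you flag are the right ones.
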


\begin{proof}[Proof of $\mathrm{(1)}$ of Theorem \ref{mainalgorithmwithsection}]

The proof of (1) of Theorem \ref{mainalgorithmwithsection} is quite similar to that of (1) of Theorem \ref{mainalgorithm}. 
The only difference is the following point: 
instead of a horizontal distribution $\mathcal{H}$ of the fibration $\tilde{f}|_{M\setminus \mathcal{S}_{\tilde{f}}}$, we take a horizontal distribution $\mathcal{H}_{\sigma}$ of the fibration $\tilde{f}|_{M\setminus \mathcal{S}_{\tilde{f}}}$, which satisfies the same conditions as that on $\mathcal{H}$, so that it is tangent to the image of the section $\sigma$. 
By using such a horizontal distribution, we can apply all the arguments in the proof of Theorem \ref{mainalgorithm} straightforwardly. 
We omit details of the proof. 

\end{proof}

\begin{proof}[Proof of $\mathrm{(2)}$ of Theorem \ref{mainalgorithmwithsection}]

As the proof of (1), the proof of (2) is also similar to that of (2) of Theorem \ref{mainalgorithm}. 
By the same argument as in the proof of (2) of Theorem \ref{mainalgorithm}, all we have to prove is that we can take a homotopy from $f$ to $\tilde{f}$ so that the element $[\theta_2^{-1}\circ \theta_3\circ \theta_1]$ corresponds to $\varphi^{-1}$ for given $\varphi$. 

It is known that the group $\pi_1(\Diff^+(\Sigma_{g-1},x), \text{id})$ is trivial if $g$ is greater than or equal to $2$ (cf. \cite{Earle_Schatz}). 
Thus, by the argument similar to that in Section \ref{sec_BLFoverannulus}, we can prove that the group $\Ker{\Phi_{\tilde{c}}^{\tilde{x}}}\cap \Ker{\Phi_d^{\tilde{x}}}$ is generated by the following set: 
\[
\{t_{\tilde{\delta}(\eta)}\cdot t_{\tilde{c}}^{-1}\cdot t_{d}^{-1}\in \MCG{(\Sigma_{g+1},\tilde{x})}(\tilde{c},d) \hspace{.3em} | \hspace{.3em} \eta\in\Pi(\Sigma_{g-1} \setminus \{\tilde{x},v_i,w_j\}, v_k,w_l), \{i,k\}=\{j,l\}=\{1,2\}\},  
\]
where $\Pi(\Sigma_{g-1}\setminus \{\tilde{x},v_i,w_j\}, v_k,w_l)$ and $\tilde{\delta}(\eta)$ are defined as in Section \ref{sec_BLFoverannulus}. 
Thus, by the similar argument to that in the proof of Theorem \ref{keythm_monodromyalonggamma}, we can change $[\theta_2^{-1}\circ \theta_3\circ \theta_1]$ into $[\theta_2^{-1}\circ \theta_3\circ \theta_1]\cdot \psi$ for any $\psi\in \Ker{\Phi_{\tilde{c}}^{\tilde{x}}}\cap \Ker{\Phi_d^{\tilde{x}}}$ by modifying a flip and slip from $f$ to $\tilde{f}$. 
This completes the proof of the statement (2). 

\end{proof}

We can deal with a fibration with cusps similarly by using sink and unsink as in Section \ref{sec_mainalgorithm}. 
Suppose that $f$ has $n>0$ cusps and we take vanishing cycles $c_1,\ldots ,c_{n+1}\subset \Sigma_g$ as we took in Section \ref{sec_mainalgorithm}. 
We also take a section $\sigma:D^2\rightarrow M$ of $f$. 
We put $x=\sigma(p_0)$, which is contained in the complement $\Sigma_g\setminus (c_1\cup \cdots \cup c_{n+1})$. 
This gives a lift $\tilde{\varphi}_0\in\MCG{(\Sigma_g,x)}(c_1)$ of $\varphi_0$. 
As in Section \ref{sec_mainalgorithm}, we put $\hat{\varphi}_0=\tilde{\varphi}_0\cdot (t_{t_{\tilde{c}_1}(\tilde{c}_2)}\cdot \cdots \cdot t_{t_{\tilde{c}_n}(\tilde{c}_{n+1})})^{-1}$, and we give several conditions on elements $\alpha, d, j, \varphi, \tilde{c}_2,\ldots, \tilde{c}_{n+1}$. 
 
\vspace{.8em}

\noindent
{\it Condition $\tilde{C}_1^\prime(c_1,\ldots,c_n, \sigma)$}: 
A path $\alpha\subset \Sigma_g\setminus \{x\}$ intersects $c_1$ at the unique point $q\in c_1$ transversely.  
Furthermore, $\partial \alpha \cap (c_1\cup \cdots \cup c_{n+1})=\emptyset$. 

\vspace{.8em}

\noindent
{\it Condition $\tilde{C}_2^\prime(c_1,\ldots,c_n,\alpha,\sigma)$}: 
the closure of $j(\Int{\alpha})$ in $\Sigma_{g+1}$ is a simple closed curve.  

\vspace{.8em}

\noindent
{\it Condition $\tilde{C}_3^\prime(c_1,\ldots,c_n,\alpha,d,j, \varphi_0, \sigma)$}: 
$\Phi_{\tilde{c}_1}^{\tilde{x}}(\varphi)=1$ in $MCG(\Sigma_g,\tilde{x})(d)$ and $\Phi_d^{\tilde{x}}(\varphi) =\hat{\varphi}_0^{-1}$ in $MCG(\Sigma_g, x)(c_1)$, where we put $\tilde{x}=j(x)$ and $\tilde{c}=j(c)$. 

\vspace{.8em}
 
\noindent
{\it Condition $\tilde{C}_4^\prime(c_1,\ldots,c_n,\alpha,d,j,\sigma)$}: 
For each $i\in\{2,\ldots,n+1\}$, $i(\tilde{c}_i)$ is isotopic to $c_i$ in $\Sigma_g\setminus \{x\}$, where $i$ is an embedding defined as follows: 
\[
i: \Sigma_{g+1}\setminus d \xrightarrow{\hspace{.2em}j^{-1}\hspace{.2em}} \Sigma_g\setminus \{w_1,w_2\} \hookrightarrow \Sigma_g. 
\]
Furthermore, for each $i=1,\ldots,n$, $\tilde{c}_i$ intersects $\tilde{c}_{i+1}$ at a unique point transversely. 

\vspace{.8em}

The following theorem can be proved in the way quite similar to that of the proof of Theorem \ref{mainalgorithmwithsection}.

\begin{thm}\label{mainalgorithmwithcuspswithsection}

Let $f:M\rightarrow D^2$ be a purely wrinkled fibration as above. 

\begin{enumerate}

\item[$\mathrm{(1)}$] Let $\tilde{f}$ be a fibration obtained by applying flip and slip to $f$. 
We take a point $q_0$, reference paths $\hat{\gamma}_1,\ldots, \hat{\gamma}_{n+4}$ in $D^2$, vanishing cycles $e_1,\ldots, e_{n+4}\subset \tilde{f}^{-1}(q_0)$ as we took in $\mathrm{(1)}$ of Theorem \ref{mainalgorithmwithcusps}. 
Then, there exist an identification $\tilde{f}^{-1}(q_0)\cong \Sigma_{g+1}$ and elements $\alpha, d, j, \tilde{c}_2,\ldots, \tilde{c}_{n+1}$ and $\varphi$ satisfying the conditions $\tilde{C}_1^\prime, \tilde{C}_2^\prime, \tilde{C}_3^\prime$ and $\tilde{C}_4^\prime$ such that the following equality holds up to cyclic permutation: 
\[
(e_1,\ldots ,e_{n+4}) = (\tilde{c}_1,\ldots , \tilde{c}_{n+1}, \alpha^\prime, d, \tilde{\alpha}), 
\] 
where $\tilde{c}_1=j(c_1)$, $\tilde{\alpha}$ is the closure of $j(\Int{\alpha})$ in $\Sigma_{g+1}$, and $\alpha^\prime$ is defined as follows: 
\[
\alpha^\prime =\left(\varphi^{-1}\cdot t_{t_{\tilde{c}_1}(\tilde{c}_2)}\cdot \cdots\cdot t_{t_{\tilde{c}_n}(\tilde{c}_{n+1})}\right)(\tilde{\alpha}). 
\]

\item[$\mathrm{(2)}$] Let $\alpha, d, j, \tilde{c}_2,\ldots, \tilde{c}_{n+1}$ and $\varphi$ be elements satisfying the conditions $\tilde{C}_1^\prime,\tilde{C}_2^\prime,\tilde{C}_3^\prime$ and $\tilde{C}_4^\prime$. 
We take simple closed curves $\tilde{c}_1, \tilde{\alpha}$ and $\alpha^\prime$ as in $\mathrm{(1)}$. 
Suppose that the genus $g$ is greater than or equal to $2$. 
Then, there exists a fibration $\tilde{f}$ obtained by applying flip and slip move to $f$ such that, for a reference path $\hat{\gamma}_1,\ldots, \hat{\gamma}_{n+4}$ as in $\mathrm{(1)}$, the corresponding vanishing cycles $e_1,\ldots, e_{n+4}$ satisfy the following equality up to cyclic permutation: 
\[
(e_1,\ldots ,e_{n+4}) = (\tilde{c}_1,\ldots , \tilde{c}_{n+1}, \alpha^\prime, d, \tilde{\alpha}). 
\]

\end{enumerate}

\end{thm}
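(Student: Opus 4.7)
The plan is to combine the sink/unsink strategy of Theorem \ref{mainalgorithmwithcusps} with the pointed-mapping-class-group refinement used in Theorem \ref{mainalgorithmwithsection}. For part (1), I would first apply unsink to every cusp of $f$ to produce a fibration $\hat{f}$ whose only singularities are folds and Lefschetz points, with vanishing cycles $t_{c_i}(c_{i+1})$ along the auxiliary paths $\varepsilon_i$ and monodromy $\hat{\varphi}_0$ around the inner region. The section $\sigma$ survives unsink unchanged since the new Lefschetz singularities and the modified folds can be made disjoint from $\sigma(D^2)$, so I may pick a horizontal distribution $\mathcal{H}_\sigma$ of $\tilde{f}|_{M \setminus \mathcal{S}_{\tilde{f}}}$ that is tangent to $\sigma(D^2)$ while satisfying the seven auxiliary conditions in the proof of Theorem \ref{mainalgorithm}(1). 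Such an $\mathcal{H}_\sigma$ produces identifications $\theta_0, \theta_1 : (\Sigma_{g+1}, \tilde{x}) \xrightarrow{\cong} (\tilde{f}^{-1}(q_i), \sigma(q_i))$ and a parallel-transport diffeomorphism $\theta_3$, and the class $\varphi = [\theta_1^{-1}\circ \theta_3 \circ \theta_0] \in \MCG{(\Sigma_{g+1}, \tilde{x})}(\tilde{c}_1, d)$ is the lift required by $\tilde{C}_3'$. The identities $\Phi_{\tilde{c}_1}^{\tilde{x}}(\varphi) = 1$ and $\Phi_d^{\tilde{x}}(\varphi) = \hat{\varphi}_0^{-1}$ would be verified exactly as in Lemma \ref{lem_propertyvarphi_algorithm}, now carrying the marked point $\tilde{x}$ along; the remaining curves $\tilde{c}_2, \ldots, \tilde{c}_{n+1}$ appear after reapplying sink, which is a local move away from $\sigma$, so condition $\tilde{C}_4'$ holds automatically.

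For part (2), the realization argument mirrors that of Theorem \ref{mainalgorithmwithsection}(2). By Earle--Schatz, $\pi_1(\Diff^+(\Sigma_{g-1}, \tilde{x}), \text{id})$ is trivial once $g \geq 2$, so I can redo the Birman/Fadell--Neuwirth computation of Section \ref{sec_BLFoverannulus} with an additional puncture at $\tilde{x}$; the conclusion is that the pointed kernel $\Ker{\Phi_{\tilde{c}_1}^{\tilde{x}}} \cap \Ker{\Phi_d^{\tilde{x}}}$ is generated by the classes $t_{\tilde{\delta}(\eta)} \cdot t_{\tilde{c}_1}^{-1} \cdot t_d^{-1}$ with $\eta \in \Pi(\Sigma_{g-1} \setminus \{\tilde{x}, v_i, w_j\}, v_k, w_l)$. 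I would then rerun the $R_2$-move construction of Section \ref{sec_BLFoverannulus} with all horizontal distributions $\mathcal{H}_t^\eta$ chosen tangent to $\sigma$, yielding a pointed analogue of Theorem \ref{keythm_monodromyalonggamma}: any element of this pointed kernel is realized as the monodromy discrepancy of a $\sigma$-preserving $R_2$-move. Finally, given $\varphi$ satisfying $\tilde{C}_1', \ldots, \tilde{C}_4'$, I would apply unsink $n$ times, invoke the cusp-free pointed realization of Theorem \ref{mainalgorithmwithsection}(2) targeting the element $\varphi \cdot (t_{t_{\tilde{c}_1}(\tilde{c}_2)} \cdots t_{t_{\tilde{c}_n}(\tilde{c}_{n+1})})^{-1}$, and then reapply sink to restore the cusps; the vanishing-cycle computation from part (1) then returns the claimed tuple.

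The main obstacle will be the pointed refinement of Theorem \ref{keythm_monodromyalonggamma}. The kernel computations in Section \ref{sec_BLFoverannulus} relied on the contractibility of $\Diff_0^+ \Sigma_{g-1}$, which fails precisely in the small-genus regime we now care about; Earle--Schatz rescues the argument only after puncturing at a single point, and one must check carefully that every generator $t_{\tilde{\delta}(\eta)} \cdot t_{\tilde{c}_1}^{-1} \cdot t_d^{-1}$ of the pointed kernel can genuinely be realized by a homotopy whose support misses $\sigma$. Concretely, this amounts to showing that the path $\eta$ can be isotoped off $\tilde{x}$ inside $\Sigma_{g-1}$ and that the slip stage of flip and slip admits a $\sigma$-preserving parametrization, which in turn requires a careful choice of the $1$-parameter family $\{h_{t,s}\}$ so that its support remains disjoint from $\sigma$. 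Once this technical point is secured, compatibility of $\tilde{C}_4'$ with the locality of sink/unsink is immediate, and the remainder of the argument runs in parallel to the proofs already given.
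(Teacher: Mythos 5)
Your proposal is correct and follows essentially the same route the paper takes: the paper simply states that this theorem "can be proved in the way quite similar to that of the proof of Theorem \ref{mainalgorithmwithsection}," i.e.\ combine the sink/unsink reduction of Theorem \ref{mainalgorithmwithcusps} with the section-tangent horizontal distribution and the Earle--Schatz pointed-kernel argument, which is exactly what you spell out. Your added attention to making the $R_2$-move and the family $\{h_{t,s}\}$ preserve $\sigma$ is a reasonable elaboration of details the paper leaves implicit.
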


\subsection{Case 2: $f$ has disconnected fibers}

We next consider the case $f$ has disconnected fibers. 
In this case, $f$ has no cusps. 
We take a point $p_0\in\partial D^2$, an identification $f^{-1}(p_0)\cong \Sigma_g$, a reference path $\gamma_0$, a vanishing cycle $c\subset \Sigma_g$, and a monodromy $\varphi_0\in\MCG{(\Sigma_g)}(c)$ as we took in Section \ref{sec_mainalgorithm}. 
We also take a disconnected fiber of $f$ and denote this by $S_1\amalg S_2$, where $S_i$ is a connected component of the fiber. 
We take a section $\sigma_i: D^2\rightarrow M$ of $f$ which intersects $S_i$ for each $i=1,2$. 
We put $x_i= \sigma_i(p_0)$, which is contained in the complement $\Sigma_g\setminus c$. 
The sections $\sigma_1$ and $\sigma_2$ gives a lift $\hat{\varphi}_0\in \MCG{(\Sigma_g, x_1,x_2)}(c^\text{ori})$, and this element is contained in the kernel of the following homomorphism: 
\[
\Phi_{c}^{x_1,x_2}: \MCG{(\Sigma_g, x_1,x_2)}(c^\text{ori}) \rightarrow \MCG{(\Sigma_{g_1},x_1)}\times \MCG{(\Sigma_{g_2},x_2)}, 
\]
where $g_i$ is the genus of the closed surface $S_i$. 

By using this lift, we can apply all the argument in Case 1 straightforwardly, and we can obtain the theorem similar to Theorem \ref{mainalgorithmwithsection} (we need the assumption $g_1, g_2 \geq 1$). 
We omit the details of arguments.

\begin{rem}\label{rem_casegenus1}

The statement (2) of Theorem \ref{mainalgorithmwithsection} and Theorem \ref{mainalgorithmwithcuspswithsection} does not hold if $g=1$ since the group $\pi_1(\Diff^+{(S^2,x)},\text{id})$ is not trivial (cf. \cite{Earle_Schatz}). 
To apply the same argument as in the proof of (2) of Theorem \ref{mainalgorithmwithsection} to the case $g=1$, we need to take three disjoint sections of $f$. 
Since the group $\pi_1(\Diff^+{(S^2,x_1,x_2,x_3)},\text{id})$ is trivial, the statement similar to that in Theorem \ref{mainalgorithmwithsection} and Theorem \ref{mainalgorithmwithcuspswithsection} hold for a fibration $f$ with $g=1$ (note that the group $\pi_1(\Diff^+{(S^2,x_1,x_2)},\text{id})$ is non-trivial). 
Furthermore, we can deal with a fibration with disconnected fibers which contain spheres as connected components by taking three disjoint sections so that these sections go through the sphere components. 
We omit, however, details of arguments about this case for simplicity of the paper. 

\end{rem}

\section{Application: Examples of Williams diagrams}\label{sec_exampleWilliamsdiagram}

Williams \cite{Wil2} defined a certain cyclically ordered sequence of non-separating simple closed curves in a closed surface which describes a $4$-manifold. 
This sequence is obtained by looking at vanishing cycles of  a {\it simplified purely wrinkled fibration}, which is defined below. 
In this section, we will look at relation between flip and slip and sequences of simple closed curves Williams defined. 
We will then give some new examples of this sequence. 

\begin{defn}

A purely wrinkled fibration $\zeta:M^4\rightarrow S^2$ is called a {\it simplified purely wrinkled fibration} if it satisfies the following conditions: 

\begin{enumerate}

\item all the fiber of $\xi$ are connected, 

\item the set of singularities $\mathcal{S}_{\zeta}\subset M$ of $\zeta$ is connected and non-empty, 

\item the restriction $\zeta|_{\mathcal{S}_{\zeta}}$ is injective. 

\end{enumerate}

It is easy to see that $\zeta$ has two types of regular fibers: $\Sigma_g$ and $\Sigma_{g-1}$ for some $g\geq 1$. 
We call the genus $g$ of a higher-genus regular fiber the {\it genus} of $\zeta$. 
In this paper, we call a simplified purely wrinkled fibration an SPWF for simplicity. 

\end{defn}

Let $\zeta:M\rightarrow S^2$ be a genus-$g$ SPWF. 
We denote by $\{s_1,\ldots ,s_n\}$ the set of cusps of $f$. 
We put $u_i=f(s_i)$. 
We take a regular value $p_0$ of $\zeta$ so that the genus of the fiber $\zeta^{-1}(p_0)$ is equal to $g$. 
The indices of $s_i$ are chosen so that $u_1,\ldots, u_n$ appear in this order when we travel the image $\zeta(\mathcal{S}_f)$ counterclockwise around $p_0$. 
The points $u_1,\ldots,u_n$ divides the image $\zeta(\mathcal{S}_{\zeta})$ into $n$ edges. 
We denote by $l_i\subset \zeta(\mathcal{S}_{\zeta})$ the edge between $u_i$ and $u_{i+1}$ (we regard the indices as in $\mathbb{Z}/n\mathbb{Z}$.  In particular, $u_{n+1}=u_1$). 
We take paths $\gamma_1,\ldots,\gamma_n\subset S^2$ satisfying the following conditions:

\begin{itemize}

\item $\gamma_i$ connects $p_0$ to a point in $\Int{l_i}$, 

\item $\Int{\gamma_i} \cap f(\mathcal{S}_{\zeta})=\emptyset$, 

\item $\gamma_i\cap \gamma_j=\{p_0\}$ if $i\neq j$. 

\end{itemize}

We fix an identification $\zeta^{-1}(p_0)\cong \Sigma_g$. 
These paths give $\Sigma_g$ a sequence of vanishing cycles of $\zeta$, which we denote by $(c_1,\ldots,c_n)$. 

\begin{defn}[\cite{Wil2}]

Let $\zeta:M\rightarrow S^2$ be an SPWF with genus $g\geq 3$. 
We denote by $(c_1,\ldots,c_n)$ a sequence of simple closed curves in $\Sigma_g$ obtained as above. 
We call this sequence a {\it Williams diagram} of a $4$-manifold $M$. 

\end{defn}

\begin{rem}

A diagram defined above was called a "surface diagram" of  a $4$-manifold $M$ in \cite{Wil2}. 
We can define a Williams diagram of an SPWF in the obvious way. 
In this paper, we call both of the diagram, that of a $4$-manifold and that of an SPWF, a Williams diagram. 

\end{rem}

\begin{rem}

It is known that every smooth map $h:M^4\rightarrow S^2$ from an oriented, closed, connected $4$-manifold $M$ is homotopic to an SPWF with genus greater than $2$ (see \cite{Wil}). 
In particular, every closed oriented connected $4$-manifold has a Williams diagram. 
Moreover, the total space of an SPWF is uniquely determined by a sequence of vanishing cycles if the genus is greater than $2$ since the group $\pi_1(\Diff^+{\Sigma_{g-1}}, \text{id})$ is trivial if $g\geq 3$. 
Thus, a $4$-manifold is uniquely determined by a Williams diagram. 
However, it is known that there exist infinitely many SPWFs which have same vanishing cycles (see \cite{BK} and \cite{H}, for example). 

\end{rem}


Let $\zeta:M\rightarrow S^2$ be a genus-$g$ SPWF and $(c_1,\ldots,c_n)$ a Williams diagram of $\zeta$. 
For a base point $p_0$, we take a disk $D$ in $S^2\setminus \zeta(\mathcal{S}_{\zeta})$ satisfying the following conditions: 

\begin{itemize}

\item $p_0\in \partial D$, 

\item $\gamma_i\cap D=\{p_0\}$, where $\gamma_i\subset S^2$ is a reference path from $p_0$ which gives a vanishing cycle $c_i$,  

\item $\gamma_1,\ldots, \gamma_n, D$ appear in this order when we go around $p_0$ counterclockwise. 

\end{itemize}

\noindent
We consider the restriction $\zeta|_{M\setminus \zeta^{-1}(\Int{D})}$. 
This is a purely wrinkled fibration and satisfies the conditions in the beginning of Section \ref{sec_mainalgorithm}. 
Thus, we can apply arguments in Section \ref{sec_mainalgorithm} to $\zeta|_{M\setminus \zeta^{-1}(\Int{D})}$. 
In particular, we can describe an algorithm to obtain a Williams diagram of a fibration obtained by applying flip and slip to $\zeta$. 
As in Section \ref{sec_mainalgorithm}, we prepare several conditions to give an algorithm precisely. 
We first remark that we can assume that $\varphi_0$ is trivial in this case since $\zeta^{-1}(\partial D)$ is bounded by the trivial fibration. 
In particular, we obtain: 
\[
\hat{\varphi}= (t_{t_{c_{1}}(c_{2})}\cdot \cdots \cdot t_{t_{c_{n-1}}(c_{n})}\cdot t_{t_{c_{n}}(c_{1})})^{-1}. 
\]

The first condition is on an embedded path $\alpha\subset \Sigma_g$. 

\vspace{.8em}

\noindent
{\it Condition $W_1(c_1,\ldots,c_n)$}: 
A path $\alpha\subset \Sigma_g$ intersects $c_1$ at the unique point $q\in c_1$ transversely.  
Furthermore, $\partial \alpha \cap (c_1\cup \cdots \cup c_n)=\emptyset$. 

\vspace{.8em}

We take a path $\alpha\subset \Sigma_g$ so that $\alpha$ satisfies the condition $W_1(c_1,\ldots,c_n)$. 
We put $\partial \alpha =\{w_1,w_2\}$. 
The second condition is on a simple closed curve $d\subset \Sigma_{g+1}$ and a diffeomorphism $j:\Sigma_g\setminus \{w_1,w_2\} \rightarrow \Sigma_{g+1}\setminus d $. 

\vspace{.8em}

\noindent
{\it Condition $W_2(c_1,\ldots,c_n,\alpha)$}: 
the closure of $j(\Int{\alpha})$ in $\Sigma_{g+1}$ is a simple closed curve.  

\vspace{.8em}

We take a simple closed curve $d\subset \Sigma_{g+1}$ and a diffeomorphism $j:\Sigma_g\setminus \{w_1,w_2\} \rightarrow \Sigma_{g+1}\setminus d $ so that they satisfy the condition $W_2(c_1,\ldots,c_n,\alpha)$. 
We put $\tilde{c}_1=j(c_1)$. 
The third condition is on an element $\varphi\in \MCG{(\Sigma_{g+1})} (\tilde{c}_1,d)$. 

\vspace{.8em}
\noindent
{\it Condition $W_3(c_1,\ldots,c_n,\alpha,d,j)$}: 
$\Phi_{\tilde{c}_1}(\varphi)=1$ in $\MCG(\Sigma_g)(d)$ and $\Phi_d (\varphi) =t_{t_{c_1}(c_2)}\cdot \cdots \cdot t_{t_{c_{n-1}}(c_n)}\cdot t_{t_{c_{n}}(c_1)}$ in $\MCG(\Sigma_g)(c_1)$. 

\vspace{.8em}

The last condition is on simple closed curves $\tilde{c}_2,\ldots,\tilde{c}_n\subset \Sigma_{g+1}\setminus d$. 

\vspace{.8em}
\noindent
{\it Condition $W_4(c_1,\ldots,c_n,\alpha,d,j)$}: 
For each $i\in\{2,\ldots,n\}$, $i(\tilde{c}_i)$ is isotopic to $c_i$ in $\Sigma_g$, where $i$ is an embedding defined as follows: 
\[
i: \Sigma_{g+1}\setminus d \xrightarrow{\hspace{.2em}j^{-1}\hspace{.2em}} \Sigma_g\setminus \{w_1,w_2\} \hookrightarrow \Sigma_g. 
\]
Furthermore, $\tilde{c}_i$ intersects $\tilde{c}_{i+1}$ at a unique point transversely for each $i\in \mathbb{Z}/n\mathbb{Z}$. 

\vspace{.8em}

By Theorem \ref{mainalgorithmwithcusps}, we immediately obtain the following theorem. 

\begin{thm}\label{mainalgorithm_SPWF}

Let $\zeta:M\rightarrow S^2$ be a genus-$g$ SPWF and $(c_1,\ldots,c_n)$ a Williams diagram of $\zeta$. 

\begin{enumerate}

\item[$\mathrm{(1)}$] Let $\tilde{\zeta}$ be a genus-$(g+1)$ SPWF obtained by applying flip and slip to $\zeta$. 
Then, there exist elements $\alpha, d, j, \tilde{c}_2,\ldots,\tilde{c}_n, \varphi$ satisfying the conditions $W_1, W_2, W_3$ and $W_4$ such that 
the sequence $(\tilde{c}_1,\ldots,\tilde{c}_n,\tilde{c}_1, \alpha^\prime, d, \tilde{\alpha})$ gives a Williams diagram of $\tilde{\zeta}$, where $\tilde{c}_1=j(c_1)$, $\tilde{\alpha}$ is the closure of $j(\Int{\alpha})$ in $\Sigma_{g+1}$, and $\alpha^\prime$ is defined as follows: 
\[
\alpha^\prime =\left(\varphi^{-1}\cdot t_{t_{\tilde{c}_1}(\tilde{c}_2)}\cdot\cdots\cdot t_{t_{\tilde{c}_n}(\tilde{c}_1)}\right) (\tilde{\alpha}). 
\]

\item[$\mathrm{(2)}$] Let $\alpha, d, j, \tilde{c}_2,\ldots,\tilde{c}_n$ and $\varphi$ be elements satisfying the conditions $W_1, W_2, W_3$ and $W_4$. 
Suppose that $g$ is greater than or equal to $3$. 
We take simple closed curves $\alpha^\prime,\tilde{\alpha}$ as in $\mathrm{(1)}$. 
Then, there exists a genus-$(g+1)$ SPWF $\tilde{\zeta}$ obtained by applying flip and slip to $\zeta$ such that $(\tilde{c}_1,\ldots,\tilde{c}_n, \tilde{c}_1, \alpha^\prime, d, \tilde{\alpha})$ is a Williams diagram of $\tilde{\zeta}$. 

\end{enumerate}

\end{thm}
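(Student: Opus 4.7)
The plan is to reduce both statements to Theorem~\ref{mainalgorithmwithcusps} by restricting $\zeta$ to $M\setminus \zeta^{-1}(\Int{D})$. The restriction is a purely wrinkled fibration over the disk $S^2\setminus \Int{D}$ whose singularity set is an embedded circle with $n$ cusps, together with base point $p_0\in\partial(S^2\setminus\Int{D})$, reference paths $\gamma_1,\ldots,\gamma_n$, and vanishing cycles $c_1,\ldots,c_n$; this is exactly the set-up of the cuspidal case in Section~\ref{sec_mainalgorithm}. Crucially, since $\zeta^{-1}(D)$ is the trivial $\Sigma_g$-bundle over the disk $D$, the monodromy $\varphi_0$ around $\partial(S^2\setminus\Int{D})$ is trivial, so the cycle $c_{n+1}=\varphi_0(c_1)$ of Section~\ref{sec_mainalgorithm} coincides with $c_1$ and the element $\hat{\varphi}_0$ simplifies to $(t_{t_{c_1}(c_2)}\cdots t_{t_{c_n}(c_1)})^{-1}$.

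Next I would verify the dictionary that the conditions $W_1,\ldots,W_4$ are precisely the conditions $\tilde{C}_1,\ldots,\tilde{C}_4$ specialised to $\varphi_0=1$ and $c_{n+1}=c_1$. Those on the path $\alpha$, the curve $d$, and the diffeomorphism $j$ are identical verbatim; the equation $\Phi_d(\varphi)=t_{t_{c_1}(c_2)}\cdots t_{t_{c_n}(c_1)}$ in $W_3$ matches $\Phi_d(\varphi)=\hat{\varphi}_0^{-1}$ under the above simplification; and the part of $\tilde{C}_4$ pertaining to $\tilde{c}_{n+1}$ becomes automatic once one sets $\tilde{c}_{n+1}:=\tilde{c}_1$, because $c_{n+1}=c_1$.

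With this dictionary, statement (1) is immediate from (1) of Theorem~\ref{mainalgorithmwithcusps}: a flip and slip from $\zeta$ to $\tilde{\zeta}$ restricts to a flip and slip of the disk fibration, whose output sequence $(\tilde{c}_1,\ldots,\tilde{c}_{n+1},\alpha',d,\tilde{\alpha})$ on $\tilde{\zeta}^{-1}(q_0)\cong\Sigma_{g+1}$ becomes the claimed Williams diagram $(\tilde{c}_1,\ldots,\tilde{c}_n,\tilde{c}_1,\alpha',d,\tilde{\alpha})$ of $\tilde{\zeta}$ after identifying $\tilde{c}_{n+1}=\tilde{c}_1$. Statement (2) follows from (2) of Theorem~\ref{mainalgorithmwithcusps}: flip and slip is supported in a neighborhood of $\zeta(\mathcal{S}_\zeta)$ and hence disjoint from $\zeta^{-1}(D)$, so any flip and slip of the disk fibration realising the prescribed vanishing cycles glues back via the trivial $\Sigma_g$-bundle on $D$ to a genus-$(g+1)$ SPWF $\tilde{\zeta}:M\to S^2$ whose Williams diagram is as asserted. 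The genus hypothesis $g\geq 3$ is inherited directly from (2) of Theorem~\ref{mainalgorithmwithcusps}.

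The only point requiring real care is the matching of cyclic orderings: a Williams diagram is defined by the counterclockwise order of cusps around a point in the higher-genus region, while Theorem~\ref{mainalgorithmwithcusps} produces $e_1,\ldots,e_{n+4}$ with respect to reference paths from the point $q_0$ inside the new singular circle. Since $q_0$ lies in the higher-genus region of $\tilde{\zeta}$ and the reference paths from $q_0$ are arranged counterclockwise around $q_0$, the resulting sequence of cycles inherits exactly the cyclic order defining the Williams diagram, and no further adjustment is required.
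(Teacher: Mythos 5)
Your proposal is correct and follows essentially the same route as the paper: the paper sets up the restriction $\zeta|_{M\setminus \zeta^{-1}(\Int{D})}$, observes that $\varphi_0$ is trivial so that $\hat{\varphi}_0=(t_{t_{c_1}(c_2)}\cdots t_{t_{c_n}(c_1)})^{-1}$ and the conditions $W_1,\ldots,W_4$ become the specializations of $\tilde{C}_1,\ldots,\tilde{C}_4$ with $c_{n+1}=c_1$, and then deduces the theorem immediately from Theorem \ref{mainalgorithmwithcusps}. Your additional remarks on gluing back over $D$ and on matching the cyclic orderings are correct elaborations of what the paper leaves implicit.
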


As in Section \ref{sec_algorithm_smallgenera}, we can deal with SPWFs with small genera by looking at additional data. 
Let $\zeta:M\rightarrow S^2$ be a genus-$g$ SPWF with Williams diagram $(c_1,\ldots,c_n)$. 
We take a disk $D\subset S^2$ as above.
We also take a section $\sigma: S^2\setminus \Int{D}\rightarrow M\setminus \zeta^{-1}(\Int{D})$ of the fibration $\zeta|_{M\setminus \zeta^{-1}(\Int{D})}$. 
We put $x=\sigma(p_0)$. 
We take a trivialization $\zeta^{-1}(D)\cong D \times \Sigma_g$ so that it is compatible with the identification $\zeta^{-1}(p_0)\cong \Sigma_g$. 
Let $\beta_x\in \pi_1(\Sigma_g, x)$ be an element which is represented by the following loop: 
\[
p_2\circ \sigma: (\partial D, p_0)\rightarrow (\Sigma_g\setminus (c_1\cup \cdots \cup c_n), x), 
\]
where $p_2: D\times \Sigma_g\rightarrow \Sigma_g$ is the projection onto the second component. 
It is easy to see that the monodromy along $\partial D$ (oriented as a boundary of $S^2\setminus \Int{D}$) corresponds to the pushing map $Push(\beta_x)^{-1}$. 
Thus, we can assume that $\tilde{\varphi}_0=Push(\beta_x)^{-1}\in \MCG{(\Sigma_g,x)}(c_1)$ in this case. 
We call the loop $\beta_x$ an {\it attaching loop}. 

\begin{rem}

We can obtain a handle decomposition of the total space of an SPWF by changing it into a simplified broken Lefschetz fibration using unsink. 
Indeed, Baykur \cite{Ba} gave a way to obtain a handle decomposition of the total spaces of simplified broken Lefschetz fibrations from monodromy representation (or equivalently, vanishing cycles of the fibrations). 
The loop $t\mapsto (t, \beta_x(t)) \in D\times \Sigma_g$ corresponds to the attaching circle of the $2$-handle in the lower side of the fibration. 
This is because $\beta_x$ is called an attaching loop. 

\end{rem}

We consider the following conditions on elements $\alpha, d, j, \varphi, \tilde{c}_2,\ldots, \tilde{c}_n$ as in Section \ref{sec_algorithm_smallgenera}. 

\noindent
{\it Condition $W_1^{\prime}(c_1,\ldots,c_n,\sigma)$}: 
A path $\alpha\subset \Sigma_g\setminus \{x\}$ intersects $c_1$ at the unique point $q\in c$ transversely. 
Furthermore, $\partial \alpha \cap (c_1\cup \cdots \cup c_n)=\emptyset$. 

\vspace{.8em}
\noindent
{\it Condition $W_2^{\prime}(c_1,\ldots,c_n,\alpha,\sigma)$}: 
the closure of $j(\Int{\alpha})$ in $\Sigma_{g+1}$ is a simple closed curve.  

\vspace{.8em}
\noindent
{\it Condition $W_3^{\prime}(c_1,\ldots,c_n,\alpha,d,j,\sigma)$}: 
We put $\tilde{c}_1=j^{-1}(c_1)$ and $\tilde{x}= j(x)$. 
$\Phi_{\tilde{c}_1}^{\tilde{x}}(\varphi)=1$ in $\MCG{(\Sigma_g,x)}(d)$ and $\Phi_d^{\tilde{x}} (\varphi) =t_{t_{c_1}(c_2)}\cdot \cdots \cdot t_{t_{c_{n-1}}(c_n)} \cdot t_{t_{c_{n}}(c_1)} \cdot Push(\beta_x)$ in $\MCG(\Sigma_g,x)(c)$.

\vspace{.8em}
\noindent
{\it Condition $W_4^{\prime}(c_1,\ldots,c_n,\alpha,d,j,\sigma)$}: 
For each $i\in\{2,\ldots,n\}$, a curve $\tilde{c}_i\subset \Sigma_{g+1}\setminus \{\tilde{x}\}$ satisfies $i(\tilde{c}_i)$ is isotopic to $c_i$ in $\Sigma_g\setminus \{x\}$, where $i$ is an embedding defined as follows: 
\[
i: \Sigma_{g+1}\setminus d \xrightarrow{\hspace{.2em}j^{-1}\hspace{.2em}} \Sigma_g\setminus \partial \alpha \hookrightarrow \Sigma_g. 
\]

\vspace{.8em}

Then, we can obtain the following theorem by Theorem \ref{mainalgorithmwithcuspswithsection}. 

\begin{thm}\label{mainalgorithm_SPWFwithsection}

Let $\zeta:M\rightarrow S^2$ be a genus-$g$ SPWF and $(c_1,\ldots,c_n)$ a Williams diagram of $\zeta$. 
We take a disk $D\subset S^2$, $\sigma: S^2\setminus \Int{D}\rightarrow M\setminus \zeta^{-1}(\Int{D})$, and an element $\beta_x\in \pi_1(\Sigma_g, x)$ as above. 

\begin{enumerate}

\item[$\mathrm{(1)}$] Let $\tilde{\zeta}$ be a genus-$(g+1)$ SPWF obtained by applying flip and slip to $\zeta$. 
Then, there exist elements $\alpha, d, j, \tilde{c}_2,\ldots,\tilde{c}_n, \varphi$ satisfying the conditions $W_1^{\prime}, W_2^{\prime}, W_3^{\prime}$ and $W_4^{\prime}$ such that the sequence $(\tilde{c}_1,\ldots,\tilde{c}_n,\tilde{c}_1,\alpha^\prime,d,\tilde{\alpha})$ gives a Williams diagram $\tilde{\zeta}$, where $\tilde{c}_1=j^{-1}(c_1)$, $\tilde{\alpha}$ is the closure of $j^{-1}(\Int{\alpha})$ in $\Sigma_{g+1}$, and $\alpha^\prime$ is defined as follows:
\[
\alpha^\prime =\left(\varphi^{-1}\cdot t_{t_{\tilde{c}_1}(\tilde{c}_2)}\cdot\cdots\cdot t_{t_{\tilde{c}_n}(\tilde{c}_1})\right)(\tilde{\alpha}).  
\]

\item[$\mathrm{(2)}$] Let $\alpha, d, j, \tilde{c}_2,\ldots,\tilde{c}_n$ and $\varphi$ be elements satisfying the conditions $W_1^{\prime}, W_2^{\prime}, W_3^{\prime}$ and $W_4^{\prime}$. 
Suppose that $g$ is greater than or equal to $2$. 
We take simple closed curves $\alpha^\prime, \tilde{\alpha}$ as in $\mathrm{(1)}$. 
Then, there exists a genus-$(g+1)$ SPWF $\tilde{\zeta}$ obtained by applying flip and slip to $\zeta$ such that $(\tilde{c}_1,\ldots,\tilde{c}_n, \tilde{c}_1,\alpha^\prime,d, \tilde{\alpha})$ is a Williams diagram of $\tilde{\zeta}$. 

\end{enumerate}

\end{thm}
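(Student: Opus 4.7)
The plan is to deduce Theorem \ref{mainalgorithm_SPWFwithsection} directly from Theorem \ref{mainalgorithmwithcuspswithsection}, applied to the restricted purely wrinkled fibration $\zeta|_{M\setminus \zeta^{-1}(\Int{D})} : M\setminus \zeta^{-1}(\Int{D}) \to S^2\setminus \Int{D}\cong D^2$. This parallels the passage from Theorem \ref{mainalgorithmwithcusps} to Theorem \ref{mainalgorithm_SPWF}, but keeps track of the additional section data. The restriction satisfies the hypotheses listed at the beginning of Section \ref{sec_mainalgorithm}, its cusps are exactly the $s_i$, and the reference paths $\gamma_1,\ldots,\gamma_n$ (which lie in $S^2\setminus \Int{D}$) equip its reference fiber with the vanishing cycles $c_1,\ldots,c_n$ of the given Williams diagram. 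The section $\sigma$ of $\zeta$ over $S^2\setminus \Int{D}$ is itself a section of the restricted fibration and plays the role required by Theorem \ref{mainalgorithmwithcuspswithsection}.

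The only substantive point is the translation of the boundary monodromy. Using the trivialization $\zeta^{-1}(D)\cong D\times \Sigma_g$ compatible with the identification $\zeta^{-1}(p_0)\cong \Sigma_g$, the loop $p_2\circ \sigma|_{\partial D}$ represents the class $\beta_x\in\pi_1(\Sigma_g,x)$, so the lifted monodromy of $\zeta|_{M\setminus \zeta^{-1}(\Int{D})}$ along $\partial D$ oriented as the boundary of $S^2\setminus \Int{D}$ equals $\tilde{\varphi}_0 = Push(\beta_x)^{-1}\in \MCG{(\Sigma_g,x)}(c_1)$. Substituting this into the formula $\hat{\varphi}_0 = \tilde{\varphi}_0\cdot (t_{t_{c_1}(c_2)}\cdot\cdots\cdot t_{t_{c_n}(c_{n+1})})^{-1}$ from Section \ref{sec_mainalgorithm} and using that $Push(\beta_x)^{-1}$ preserves $c_1$ setwise, so that $c_{n+1}=\varphi_0(c_1)$ is isotopic to $c_1$, one obtains
\[
\hat{\varphi}_0^{-1} = t_{t_{c_1}(c_2)}\cdot \cdots\cdot t_{t_{c_{n-1}}(c_n)}\cdot t_{t_{c_n}(c_1)}\cdot Push(\beta_x),
\]
which is exactly the element appearing in condition $W_3^\prime$. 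Under this dictionary the four conditions $\tilde{C}_1^\prime, \tilde{C}_2^\prime, \tilde{C}_3^\prime, \tilde{C}_4^\prime$ translate verbatim into $W_1^\prime, W_2^\prime, W_3^\prime, W_4^\prime$, and we set $\tilde{c}_{n+1}:=\tilde{c}_1$.

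With these identifications in place, part (1) is immediate from part (1) of Theorem \ref{mainalgorithmwithcuspswithsection}: its output $(e_1,\ldots,e_{n+4}) = (\tilde{c}_1,\ldots,\tilde{c}_{n+1}, \alpha^\prime, d, \tilde{\alpha})$ becomes precisely the sequence $(\tilde{c}_1,\ldots,\tilde{c}_n,\tilde{c}_1,\alpha^\prime,d,\tilde{\alpha})$ claimed to be a Williams diagram of $\tilde{\zeta}$. For part (2), assuming $g\geq 2$ as in the hypothesis of Theorem \ref{mainalgorithmwithcuspswithsection}(2), I would invoke that theorem to produce a flip-and-slip modification of $\zeta|_{M\setminus \zeta^{-1}(\Int{D})}$ realizing the prescribed vanishing cycles, and then glue the trivial piece $\zeta^{-1}(D)\cong D\times \Sigma_g$ back along its unchanged fibered boundary to close up to an SPWF $\tilde{\zeta}:M\to S^2$. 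The only step demanding any genuine verification — the rest being a formal translation of conditions and a direct appeal to the earlier theorem — is the identification $\tilde{\varphi}_0 = Push(\beta_x)^{-1}$, which is essentially the observation already invoked in the remark defining the attaching loop.
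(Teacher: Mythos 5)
Your proposal is correct and follows the same route as the paper, which simply sets up the dictionary (restricting $\zeta$ to the complement of $\zeta^{-1}(\Int{D})$, identifying the lifted boundary monodromy with $Push(\beta_x)^{-1}$ so that $\hat{\varphi}_0^{-1}=t_{t_{c_1}(c_2)}\cdot\cdots\cdot t_{t_{c_n}(c_1)}\cdot Push(\beta_x)$, and matching the conditions $\tilde{C}_i^\prime$ with $W_i^\prime$) and then states that the theorem follows from Theorem \ref{mainalgorithmwithcuspswithsection}. Your write-up makes this reduction explicit but adds no genuinely different idea.
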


\begin{exmp}\label{exmp_trivialfibration}

Let $p_1: S^2\times \Sigma_k\rightarrow S^2$ be the projection onto the first component ($k\geq 0$). 
By applying a birth (for details about this move, see \cite{Lek} or \cite{Wil}, for example), we can change $p_1$ into a genus-$(k+1)$ SPWF with two cusps. 
We then apply a flip and slip move to this SPWF $m$ times. 
As a result, we obtain a genus-$(k+m+1)$ SPWF on the manifold $S^2\times \Sigma_k$. 
We denote this fibration by $\tilde{p}_1^{(m)}: S^2\times \Sigma_k\rightarrow S^2$. 

\vspace{.8em}
\noindent
{\bf Claim.} A Williams diagram of $\tilde{p}_1^{(m)}$ corresponds to $(d_0,d_1,\ldots,d_{2m},d_{2m+1},d_{2m},\ldots,d_1)$, where $d_i\subset \Sigma_{k+m+1}$ is a simple closed curve described in the left side of Figure \ref{scctrivialfibration}. 

\begin{figure}[htbp]
\begin{center}
\includegraphics[width=150mm]{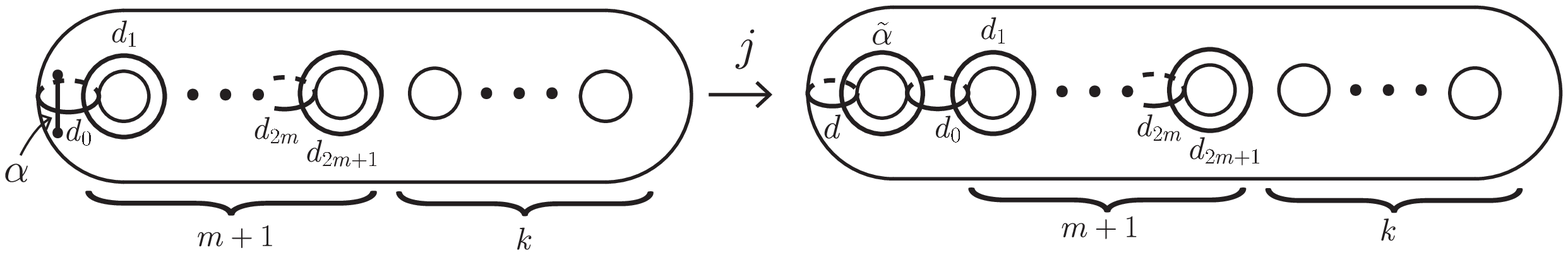}
\end{center}
\caption{simple closed curves in the genus-$(k+m+1)$ closed surface $\Sigma_{k+m+1}$. }
\label{scctrivialfibration}
\end{figure}

We prove this claim by induction on $m$. 
The claim is obvious when $m=0$. 
We assume that $m>0$. 
For simplicity, we denote the Dehn twist along the curve $d_i$ by $i$ and its inverse by $\bar{i}$. 
For an integer $n>0$, let $S_{n}$ be a regular neighborhood of the union $d_0 \cup \cdots\cup d_n$. 
By direct calculation, we can prove the following relation in $\MCG{(\overline{S_{n}}, \partial \overline{S_{n}})}$: 
\begin{equation}\label{relation_trivialfibration}
t_{t_{d_0}(d_{1})}\cdot \cdots \cdot t_{t_{d_{n-1}}(d_{n})}\cdot t_{t_{d_n}(d_{n-1})}\cdot\cdots \cdot t_{t_{d_{1}}(d_{0})} =\begin{cases}
\bar{0}^{4}\cdot (01)^{3} & (n=1),  \\
\bar{0}^{2n+2}\cdot (01\cdot\cdots\cdot n)^{n+2}\cdot (\bar{2}\bar{3}\cdot\cdots\cdot \bar{n})^{n} & (n\geq 2). 
\end{cases}
\end{equation}
By induction hypothesis, a sequence $(d_0,\ldots, d_{2m-2}, d_{2m-1}, d_{2m-2},\ldots, d_1)$ is a Williams diagram of $\tilde{p}_1^{(m-1)}$. 
We will stabilize this diagram by using Theorem \ref{mainalgorithm_SPWF}. 
We take a path $\alpha\subset \Sigma_{k+(m-1)+1}$ as in the left side of Figure \ref{scctrivialfibration}. 
Let $j:\Sigma_{k+m}\setminus \partial \alpha \rightarrow \Sigma_{k+m+1}\setminus d$ be a diffeomorphism, where $d$ is a non-separating simple closed curve.
By using $j$, we regard $d_i$ as a curve in $\Sigma_{k+m+1}$. 
It is easy to see that the element $t_{t_{d_0}(d_{1})}\cdot \cdots \cdot t_{t_{d_{2m-2}}(d_{2m-1})}\cdot t_{t_{d_{2m-1}}(d_{2m-2})}\cdot\cdots \cdot t_{t_{d_{1}}(d_{0})}$ is contained in the group $\MCG{(\Sigma_{k+m+1})}(d,d_0)$. 
Moreover, by the relation (\ref{relation_trivialfibration}), we can calculate the image under $\Phi_{d_0}: \MCG{(\Sigma_{k+m+1})}(d,d_0)\rightarrow \MCG{(\Sigma_{k+m})}(d)$ as follows: 
{\allowdisplaybreaks
\begin{align*}
& \Phi_{d_0}(t_{t_{d_0}(d_{1})}\cdot \cdots \cdot t_{t_{d_{2m-2}}(d_{2m-1})}\cdot t_{t_{d_{2m-1}}(d_{2m-2})}\cdot\cdots \cdot t_{t_{d_{1}}(d_{0})}) \\
= & \begin{cases}
\Phi_{d_0}(\bar{0}^{4}\cdot (01)^{3}) & (m=1),  \\
\Phi_{d_0}(\bar{0}^{4m+2}\cdot (01\cdot\cdots\cdot 2m-1)^{2m+1}\cdot (\bar{2}\bar{3}\cdot\cdots\cdot \overline{2m-1})^{2m-1}) & (m\geq 2). 
\end{cases} \\
= & \text{id}, 
\end{align*}
}
where the last equality is proved by the chain relation of the mapping class group. 
Note that this equality still holds in the group $\MCG{(\overline{S}, \partial \overline{S})}$, where $S$ is a regular neighborhood of the union $d\cup d_0\cup\cdots\cup d_{2m-1}$. 
We put $\varphi=t_{t_{d_0}(d_{1})}\cdot \cdots \cdot t_{t_{d_{2m-2}}(d_{2m-1})}\cdot t_{t_{d_{2m-1}}(d_{2m-2})}\cdot\cdots \cdot t_{t_{d_{1}}(d_{0})}\in\MCG{(\Sigma_{k+m+1})}(d,d_0)$. 
The elements $\alpha,d,j,d_0,\ldots,d_{2m-1},\varphi$ satisfy the conditions $C_1^W$, $C_2^W$, $C_3^W$ and $C_4^W$. 
Thus, by Theorem \ref{mainalgorithm_SPWF}, $(d_0,\ldots, d_{2m-2}, d_{2m-1}, d_{2m-2},\ldots, d_1, d_0, \tilde{\alpha}, d, \tilde{\alpha})$ is a Williams diagram of $\tilde{p}_1^{(m)}$. 
Note that this still holds when the genus of $\tilde{p}_1^{(m-1)}$ is less than $3$ since the above calculation of elements of mapping class groups can be done in regular neighborhoods of curves. 
This proves the claim on Williams diagrams of $S^2\times \Sigma_k$.

\end{exmp}

\begin{rem}\label{rem_stepfibration}

It is known that there exists a genus-$k$ SPWF $q: S^2\times \Sigma_{k-1}\# S^1\times S^3 \rightarrow S^2$ without cusp singularities for $k\geq 1$. 
This was introduced in \cite{Ba}, and was called the {\it step fibration}. 
By the same argument as in Example \ref{exmp_trivialfibration}, we can prove that $(d_0,d_1,\ldots d_{2m-1},d_{2m},d_{2m-1},\ldots,d_1)$ is a Williams diagram of the fibration obtained by applying flip and slip to $q$ $m$ times. 

We can also prove the claims on Williams diagrams of $S^2\times \Sigma_k$ and $S^2\times \Sigma_{k-1}\# S^1\times S^3$ by using Lemma \ref{lem_surgeryformula}. 

\end{rem}

\begin{exmp}\label{exmp_2S1S3}

We next construct a Williams diagram of $\# 2 S^1\times S^3$, which will be used to construct a Williams diagram of $S^4$. 
To do this, we first prove the following lemma. 

\begin{lem}\label{genus2fibration2S1S3}

$\#2 S^1\times S^3$ admits a genus-$2$ SPWF $\zeta$ without cusps. 
Moreover, an attaching loop $\beta_x$ of this fibration is described as in the left side of Figure \ref{sccgenus2fibration2S1S3}, where $e_0$ is a vanishing cycle of indefinite fold singularity. 

\begin{figure}[htbp]
\begin{center}
\includegraphics[width=105mm]{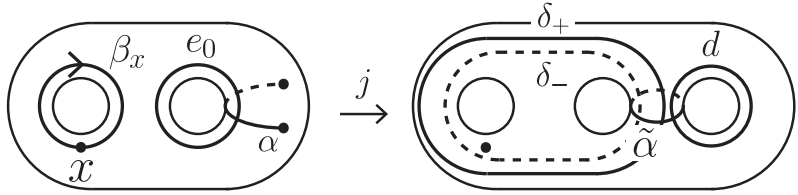}
\end{center}
\caption{$\tilde{\alpha}$ is the closure of $j(\Int{\alpha})$ in $\Sigma_3$. }
\label{sccgenus2fibration2S1S3}
\end{figure}

\end{lem}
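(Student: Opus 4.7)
The plan is to build $\zeta$ by an explicit round-handle construction and verify that the total space is $\# 2 S^1 \times S^3$ by Kirby calculus. Decompose $S^2 = D_+^2 \cup D_-^2$ as a union of two disks with common boundary circle, and let $e_0 \subset \Sigma_2$ be the non-separating curve depicted in Figure \ref{sccgenus2fibration2S1S3}. I would first form
\[
W = \bigl(\Sigma_2 \times D_+^2\bigr) \cup R,
\]
where $R$ is a round $2$-handle attached to a tubular neighborhood of $e_0 \times S^1 \subset \Sigma_2 \times S^1 = \partial(\Sigma_2 \times D_+^2)$ using the product framing. The projection $\Sigma_2 \times D_+^2 \to D_+^2$ extends across $R$, via the standard indefinite-fold normal form, to a smooth map $W \to D_+^2$ with a single circle of folds whose vanishing cycle is $e_0$ and whose boundary fibre is $T^2$. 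Capping off with $X_- = T^2 \times D_-^2$ glued by the identity along $T^2 \times S^1$ produces a closed $4$-manifold $M$ together with a smooth map $\zeta: M \to S^2$ which, by construction, is a genus-$2$ SPWF without cusps and with fold vanishing cycle $e_0$.

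To identify $M$ with $\# 2 S^1 \times S^3$, I would decompose $R$ into a $2$-handle along $e_0 \times \{\mathrm{pt}\}$ (with framing zero relative to the product structure) followed by a cancelling $3$-handle, combine this with the standard handle structure on $\Sigma_2 \times D_+^2$ (one $0$-handle, four $1$-handles, one $2$-handle for the $\Sigma_2$-relator), and with the dual decomposition of $X_-$ (one $2$-handle, two $3$-handles, one $4$-handle), to get a Kirby diagram of $M$ with one $0$-handle, four $1$-handles, three $2$-handles, three $3$-handles, and one $4$-handle. The $2$-handle along $e_0$ cancels the $1$-handle of $\Sigma_2$ dual to $e_0$; after a sequence of handle slides, one of the capping $2$-handles from $X_-$ cancels a second $1$-handle of $\Sigma_2$; and the $\Sigma_2$-relator $2$-handle, now carrying the commutator of only two remaining free generators, can be cancelled against one of the $3$-handles. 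What is left is one $0$-handle, two $1$-handles, no $2$-handles, two $3$-handles, and one $4$-handle, which is the standard handle picture of $\# 2 S^1 \times S^3$.

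For the attaching loop, pick a base point $x \in \Sigma_2 \setminus e_0$ and define the section $\sigma$ on $S^2 \setminus \Int{D}$ (for a small disk $D \subset D_+^2$ containing $p_0$) to be constant in the product trivialization on $\Sigma_2 \times D_+^2$, extended across $R$ and $X_-$ using their product structures. Then $\beta_x = p_2 \circ \sigma$ records the discrepancy between the product trivializations on the two sides of $R$: a direct local calculation in the fold normal form shows that this discrepancy is a push along a loop that crosses $e_0$ twice and encircles the second handle of $\Sigma_2$, which is precisely the loop shown in Figure \ref{sccgenus2fibration2S1S3}.

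The main obstacle will be the Kirby calculation identifying $M$ with $\# 2 S^1 \times S^3$, where the framings on the round $2$-handle and the three $2$-handles involved in the cancellations must be tracked carefully to complete the three cancelling pairs. A secondary difficulty is matching the product trivializations on the two sides of $R$ so that $\beta_x$ is recovered exactly as drawn rather than up to an ambient isotopy or a Dehn twist along $e_0$.
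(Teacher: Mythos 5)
There is a genuine gap, and it sits exactly at the point the lemma is really about. Your construction glues everything by products and identities: the round $2$-handle $R$ is attached with the product framing, $X_-=T^2\times D_-^2$ is ``glued by the identity,'' and the section is taken constant in the product trivialization and extended using the product structures. Carried out literally, this produces the step fibration of Remark \ref{rem_stepfibration} with $k=2$, whose total space is $S^2\times T^2\#S^1\times S^3$ rather than $\#2S^1\times S^3$ (these are already distinguished by $b_2$), and whose attaching loop is the constant loop at $x$, not the loop of Figure \ref{sccgenus2fibration2S1S3}. The phrase ``glued by the identity along $T^2\times S^1$'' hides the essential choice: the low-genus boundary of $W$ is a $T^2$-bundle over $S^1$ that must first be trivialized, and two trivializations differ by an element of $\pi_1(\Diff^+{(T^2)},\text{id})$; since this group is non-trivial, the total space of a genus-$2$ SPWF is \emph{not} determined by the vanishing cycle $e_0$ alone. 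The content of the lemma is that a suitably non-trivial choice of this gluing --- recorded precisely by the non-trivial attaching loop $\beta_x$ --- yields $\#2S^1\times S^3$. Your claim that the non-trivial $\beta_x$ ``emerges'' as a discrepancy computed in the fold normal form cannot be right: with all product choices there is no discrepancy, a constant section extends over everything, and $\beta_x$ is trivial.

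The same inconsistency surfaces in your Kirby calculation. The step in which ``one of the capping $2$-handles from $X_-$ cancels a second $1$-handle of $\Sigma_2$'' is exactly the step that requires the attaching circle of that $2$-handle to be $t\mapsto(t,\beta_x(t))$ for the non-trivial $\beta_x$, so that it runs geometrically once over a $1$-handle of $\Sigma_2$; for the trivial attaching loop that $2$-handle is attached along a $0$-framed unknot split from the $1$-handles, contributes to $H_2$ instead of cancelling, and one recovers the extra homology of the step fibration. So you cannot simultaneously take the product gluing and perform that cancellation. The repair is what the paper does: take the non-trivial $\beta_x$ of Figure \ref{sccgenus2fibration2S1S3} as \emph{input} data for the construction (the existence of a cusp-free genus-$2$ SPWF with prescribed fold vanishing cycle $e_0$ and prescribed attaching loop is standard from the round-handle description of such fibrations), draw the resulting Kirby diagram with the lower-side $2$-handle attached along the circle determined by $\beta_x$ as in Figure \ref{Kirbygenus2fibration2S1S3}, and only then run the cancellations to identify the total space with $\#2S^1\times S^3$.
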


\begin{proof}[Proof of Lemma \ref{genus2fibration2S1S3}]

It is easy to show that there exists a genus-$2$ SPWF $\zeta$ without cusps and whose attaching loop is $\beta_x$ which is described in Figure \ref{sccgenus2fibration2S1S3}. 
Furthermore, we can draw a Kirby diagram of the total space of $\zeta$ as described in Figure \ref{Kirbygenus2fibration2S1S3}. 
It can be easily shown by Kirby calculus that this manifold is diffeomorphic to $\#2 S^1\times S^3$. 

\begin{figure}[htbp]
\begin{center}
\includegraphics[width=100mm]{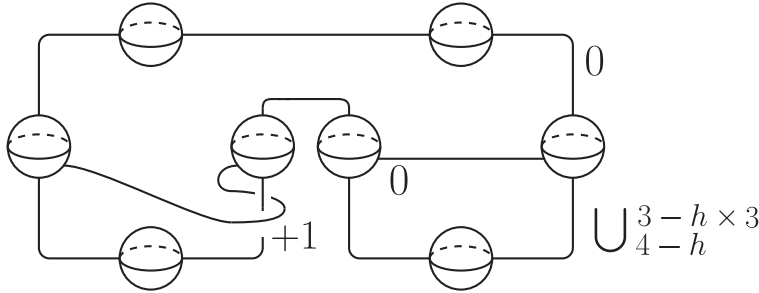}
\end{center}
\caption{Kirby diagram of the fibration $\zeta$. }
\label{Kirbygenus2fibration2S1S3}
\end{figure}

\end{proof}

We take a path $\alpha\subset\Sigma_2$ as in the left side of Figure \ref{sccgenus2fibration2S1S3}. 
We also take a diffeomorphism $j: \Sigma_2\setminus \partial \alpha\rightarrow \Sigma_{3}\setminus d$, where $d$ is a non-separating simple closed curve in $\Sigma_3$, so that the closure of $j(\Int{\alpha})$ is a simple closed curve. 
Let $\delta_+, \delta_{-}\subset \Sigma_3$ be simple closed curves descried as in the right side of Figure \ref{sccgenus2fibration2S1S3}. 
We define an element $\varphi\in \MCG{(\Sigma_3, x)}(d,e_0)$ as follows: 
\[
\varphi= Push(\beta_x)\cdot t_{\delta_+}\cdot t_{\delta_{-}}^{-1}. 
\]
It is easy to see that this element satisfies $\Phi_{d}^{x}(\varphi)=Push(\beta_x)$ and $\Phi_{e_0}^{x}(\varphi)=\text{id}$. 
Thus, the elements $\alpha,d,j,e_0,\varphi$ satisfy the conditions $W_1^{\prime}$, $W_2^{\prime}$, $W_3^{\prime}$ and $W_4^{\prime}$. 
By Theorem \ref{mainalgorithm_SPWFwithsection}, $(e_0, \alpha^\prime,d,\tilde{\alpha})$ is a Williams diagram of the fibration obtained by applying flip and slip to $\zeta$, where $\alpha^\prime= (\varphi^{-1})(\tilde{\alpha})$ (see Figure \ref{sccgenus3fibration2S1S3}). 

\begin{figure}[htbp]
\begin{center}
\includegraphics[width=120mm]{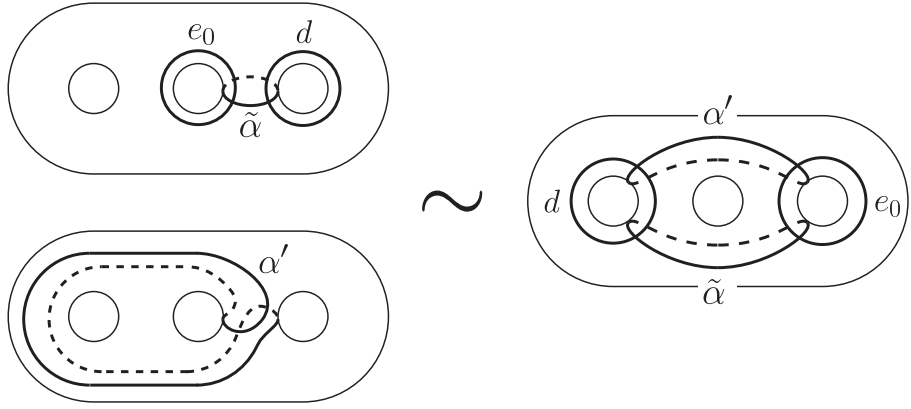}
\end{center}
\caption{simple closed curves contained in a Williams diagram of $\#2 S^1\times S^3$.}
\label{sccgenus3fibration2S1S3}
\end{figure}

\begin{rem}

More generally, we can obtain a genus-$(m+2)$ Williams diagram of $\#2S^1\times S^3$ by looking at vanishing cycles of a fibration obtained by applying flip and slip to $\zeta$ $m$ times. 

\vspace{.8em}

\noindent
{\bf Claim.} Let $e_1,\ldots , e_{3m+1}$ be simple closed curves in $\Sigma_{m+2}$ described in Figure \ref{scchighgenusfibration2S1S3}. 
The following sequence is the Williams diagram of $\#2S^1\times S^3$:  
\[
(e_1,e_2,\ldots, e_{2m-1},e_{2m},e_{2m+1}, e_{2m+2}, e_{2m-1}, e_{2m+3}, e_{2m-3},\ldots, e_{3m}, e_3,e_{3m+1}).
\]
\begin{figure}[htbp]
\begin{center}
\includegraphics[width=150mm]{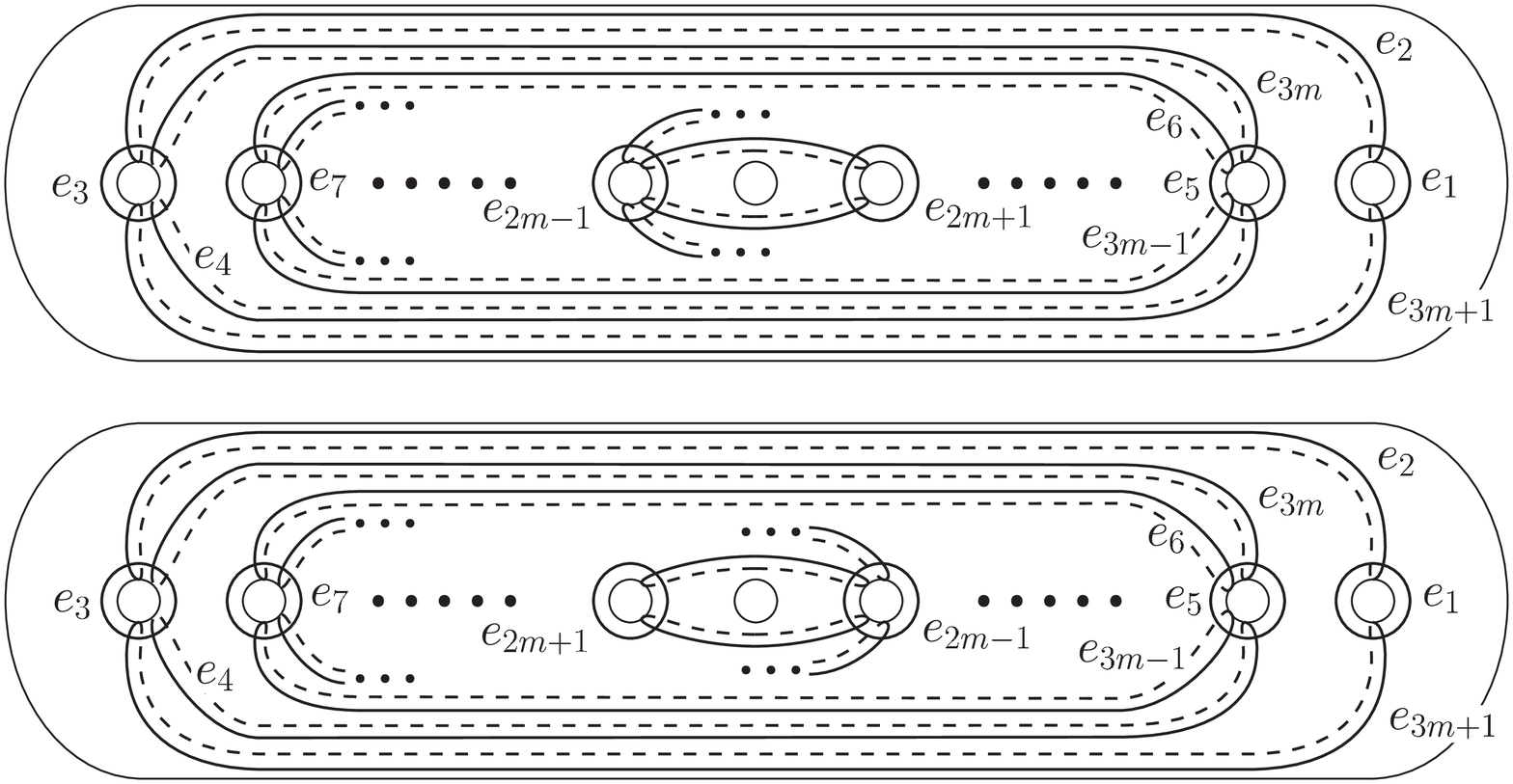}
\end{center}
\caption{the upper figure describes simple closed curves $e_1,\ldots, e_{3m+1}$ in the case $m$ is even, while the lower figure describes simple closed curves $e_1,\ldots, e_{3m+1}$ in the case $m$ is odd. }
\label{scchighgenusfibration2S1S3}
\end{figure}

\end{rem}

\end{exmp}

Before looking at the next example, we prove the following lemma. 

\begin{lem}\label{lem_surgeryformula}

Let $(c_1,\ldots, c_n)$ be a genus-$g$ Williams diagram of an SPWF $\zeta:M\rightarrow S^2$. 
We take a simple closed curve $\gamma \subset \Sigma_g$ which intersects $c_{i_0}$ at a unique point transversely. 
Then there exists a genus-$g$ SPWF $\zeta_s: M_s\rightarrow S^2$ whose Williams diagram is $(c_1,\ldots,c_{i_0-1}, c_{i_0},\gamma, c_{i_0}, c_{i_0+1},\ldots , c_n)$. 
Moreover, if $g$ is greater than or equal to $3$, the manifold $M_s$ is obtained from $M$ by applying surgery along $\gamma$, where we regard $\gamma$ as in a regular fiber of $\zeta$. 

\end{lem}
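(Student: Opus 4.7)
The plan has two distinct pieces: constructing the SPWF $\zeta_s$ with the prescribed Williams diagram, and identifying its total space $M_s$ with the surgery $M$ along $\gamma$.

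For the existence of $\zeta_s$, I would work locally in a neighborhood of the fold arc $l_{i_0}$. Choose a small open disk $D'\subset S^2$ that meets $\zeta(\mathcal{S}_\zeta)$ only in a subarc of $l_{i_0}$ and no other singular loci. Over $D'$, the fibration is the standard indefinite fold model, with vanishing cycle $c_{i_0}$. I then describe a compactly-supported homotopy of $\zeta|_{\zeta^{-1}(D')}$ which replaces the single fold arc by three fold sub-arcs joined at two newly-born cusps, arranged so that the outer two sub-arcs still have vanishing cycle $c_{i_0}$ (matching the original data on $\partial D'$) and the middle sub-arc has vanishing cycle $\gamma$. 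The crucial point that makes this modification possible is the hypothesis that $\gamma$ meets $c_{i_0}$ transversely at a single point, so that at each new cusp the two adjoining vanishing cycles intersect once and the cusp condition of Figure \ref{baselocus_sink} is satisfied. Since the modification is supported in $\zeta^{-1}(D')$, the remaining vanishing cycles $c_j$ ($j\neq i_0$) are unchanged, and choosing reference paths that meet the three new sub-arcs in order yields exactly the desired Williams diagram $(c_1,\ldots,c_{i_0-1},c_{i_0},\gamma,c_{i_0},c_{i_0+1},\ldots,c_n)$.

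For the surgery identification, when $g\geq 3$ the group $\pi_1(\mathrm{Diff}^+\Sigma_{g-1},\mathrm{id})$ is trivial, so the total space of an SPWF is determined by its Williams diagram. It therefore suffices to compare the handle decompositions induced by $\zeta$ and $\zeta_s$. The extra handles in $\zeta_s$ come from the two new cusps: applying unsink to each produces two Lefschetz singularities with computable vanishing cycles (one of which is $\gamma$ by construction). A direct Kirby-calculus calculation shows that the added 2-handles, together with the dual 3-handle coming from the new fiber topology, realize precisely the 0-surgery on $\gamma$ with its surface framing, proving $M_s\cong M_\gamma$.

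The main obstacle will be Step 1: writing down the local wrinkling explicitly so that the middle sub-arc's vanishing cycle is exactly the prescribed $\gamma$. One must exhibit a one-parameter family of fold maps on $\zeta^{-1}(D')$ interpolating between the original fibration and the configuration with two cusps, with prescribed vanishing-cycle behavior. The hypothesis $|\gamma\cap c_{i_0}|=1$ is what makes this possible, since it allows both cusps to be introduced simultaneously and the three local vanishing cycles to close up consistently; without this transversality the cusp condition would fail. Once the local model is set up, the Williams-diagram claim is immediate from the local nature of the modification, and the surgery claim reduces to a bookkeeping Kirby calculation as sketched.
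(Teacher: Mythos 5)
Your Step 1 cannot work as stated, and the obstruction is already visible in the second half of your own proposal. A homotopy of $\zeta$ (compactly supported or not) does not change the total space, so if $\zeta_s$ were obtained from $\zeta$ by ``a one-parameter family of fold maps on $\zeta^{-1}(D')$ interpolating between the original fibration and the configuration with two cusps,'' you would get $M_s=M$ on the nose. But the lemma asserts (and your Step 2 uses) that $M_s$ is a surgery on $M$, and these are in general not diffeomorphic: inserting two cusps adds, after unsinking, two Lefschetz singularities, so $\chi(M_s)=\chi(M)+2$, exactly matching the $\chi$-change of replacing $S^1\times D^3$ by $D^2\times S^2$. (The examples following the lemma, e.g.\ passing from $\#2\,S^1\times S^3$ to $S^1\times S^3$, make the change of diffeomorphism type explicit.) The local configuration you describe --- three fold sub-arcs with vanishing cycles $c_{i_0},\gamma,c_{i_0}$ meeting at two cusps --- is a perfectly consistent wrinkled-fibration picture, but it lives on a \emph{different} $4$-manifold fibering over $D'$; the modification must be a cut-and-paste of the total space, not a homotopy of the map. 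The transversality hypothesis $|\gamma\cap c_{i_0}|=1$ is indeed what makes the cusp condition hold, but it does not rescue the homotopy formulation.

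The paper's route avoids this entirely: since $t_{t_{c_1}(\gamma)}\cdot t_{t_{\gamma}(c_1)}\in\Ker\Phi_{c_1}$, the word $t_{t_{c_1}(\gamma)}\cdot t_{t_{\gamma}(c_1)}\cdot t_{t_{c_1}(c_2)}\cdots t_{t_{c_n}(c_1)}$ still lies in $\Ker\Phi_{c_1}$, so one can build from scratch a simplified broken Lefschetz fibration with these Lefschetz vanishing cycles on some $4$-manifold $M_s$, and then sink the two new Lefschetz points into the fold to create the two cusps with middle vanishing cycle $\gamma$. For the surgery identification the paper does not compare handle decompositions of the two fibrations globally; it exhibits a fibered submanifold $S\subset M$ containing a neighborhood of $c_1\cup\gamma$ in a higher-genus fiber, shows by Kirby calculus that $S\cong S^1\times D^3$ with core circle isotopic to $\gamma$, shows the corresponding piece $\overline{S}$ of $M_s$ (a fold arc plus two Lefschetz points) is $D^2\times S^2$, and concludes $M_s=(M\setminus S)\cup\overline{S}$. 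Your Step 2 is pointed in the right direction but defers the entire content to an unspecified ``direct Kirby-calculus calculation''; you would need to carry out something like the paper's $S$ versus $\overline{S}$ comparison to make it a proof.
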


\begin{proof}[Proof of Lemma \ref{lem_surgeryformula}]

By applying cyclic permutation to the sequence $(c_1,\ldots,c_n)$ if necessary, we can assume that $i_0=1$. 
It is easy to see that the element $t_{t_{c_1}(\gamma)}\cdot t_{t_{\gamma}(c_1)}$ is contained in the kernel of $\Phi_{c_1}$. 
Thus, the product $t_{t_{c_1}(\gamma)}\cdot t_{t_{\gamma}(c_1)}\cdot t_{t_{c_1}(c_2)}\cdot \cdots \cdot t_{t_{c_n}(c_1)}$ is also contained in the kernel of $\Phi_{c_1}$. 
This implies existence of a genus-$g$ simplified broken Lefschetz fibration with vanishing cycles $(c_1,t_{c_1}(\gamma),t_{\gamma}(c_1),t_{c_1}(c_2),\ldots,t_{c_n}(c_1))$. 
Such a fibration can be changed into an genus-$g$ SPWF $\zeta_s:M_s\rightarrow S^2$ with Williams diagram $(c_1,\gamma,c_1,\ldots ,c_n)$ by applying sink. 
To prove the statement on $M_s$, we look at the submanifold $S$ of $M$ satisfying the following conditions:
\begin{enumerate}

\item the image $f(S)$ is a disk and the intersection $f(S\cap \mathcal{S}_f)$ forms a connected arc without cusps, 

\item a vanishing cycle of indefinite folds in $f(S)$ is $c_1$, 

\item the restriction $f|_{S\setminus f^{-1}(\mathcal{S}_f)}: S\setminus \mathcal{S}_f\rightarrow f(S)\setminus f(\mathcal{S}_f)$ is a disjoint union of trivial fibration,  

\item the higher genus fiber of $f|_{S}: S\rightarrow f(S)$ is a regular neighborhood of the union $c_1\cup \gamma$. 

\end{enumerate}

\noindent
We can easily draw a Kirby diagram of $S$ as in the left side of Figure \ref{Kirbysurgeredsubmanifold}. 
This diagram implies that $S$ is diffeomorphic to $S^1\times D^3$, and that a generator of $\pi_1(S)$ corresponds to a simple closed curve $\gamma$. 
Let $\overline{S}$ be a manifold which is described in the right side of Figure \ref{Kirbysurgeredsubmanifold}. 
This manifold admit a fibration to $D^2$ with connected indefinite fold, which forms an arc, and two Lefschetz singularities. 
Furthermore, a regular fiber of the fibration is either a genus-$1$ surface with one boundary component or a disk. 
By Kirby calculus, we can prove that this manifold is diffeomorphic to $D^2\times S^2$. 
By the construction of the fibration $\zeta_s$, the manifold $M_s$ can be obtained by removing $S$ from $M$, and then attaching $\overline{S}$ along the boundary. 
This completes the proof of Lemma \ref{lem_surgeryformula}. 

\begin{figure}[htbp]
\begin{center}
\includegraphics[width=80mm]{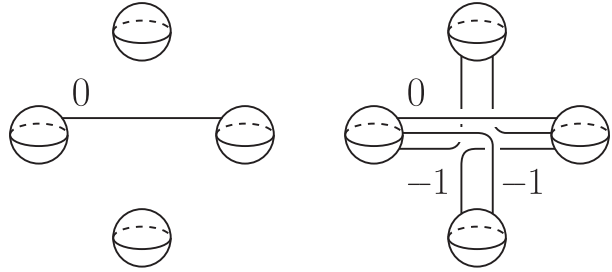}
\end{center}
\caption{Left: a Kirby diagram of $S$. Right: a Kirby diagram of $\overline{S}$}
\label{Kirbysurgeredsubmanifold}
\end{figure}

\end{proof}

\begin{exmp}\label{exmp_S4etc}

Let $e_1,e_2,e_3,e_4$ be simple closed curves in $\Sigma_3$ as described in Figure \ref{sccgenus3fibrations}. 
As is shown, a sequence $(e_1,e_2,e_3,e_4)$ is a Williams diagram of $\#2S^1\times S^3$. 
We take a curve $\gamma_i$ ($i=1,2,3,4$) as shown in Figure \ref{sccgenus3fibrations}. 
The curve $\gamma_1$ intersects $e_1$ at a unique point transversely. 
By Lemma \ref{lem_surgeryformula}, a sequence $(e_1,\gamma_1,e_1,e_2,e_3,e_4)$ is a Williams diagram of some $4$-manifold obtained by applying surgery to $\#2S^1\times S^3$. 
Indeed, we can prove by Kirby calculus that this diagram represents the manifold $S^1\times S^3$. 
In the same way, we can prove the following correspondence between Williams diagrams and $4$-manifolds: 

\begin{center}
{\renewcommand\arraystretch{1.2}
\begin{tabular}{|c|c|}
\hline
Williams diagram & corresponding $4$-manifold \\
\hline
$(e_1,\gamma_1,e_1,e_2,e_3,\gamma_4,e_3,e_4)$ & $S^4$ \\
\hline
$(e_1,\gamma_1,e_1,e_2,e_3,\gamma_3,e_3,e_4)$ & $S^1\times S^3\#S^2\times S^2$ \\
\hline
$(e_1,\gamma_1,e_1,e_2,e_3,\gamma_4,e_3,\gamma_4,e_3,e_4)$ & $S^2\times S^2$ \\
\hline
$(e_1,\gamma_1,\gamma_2,\gamma_1,e_1,e_2,e_3,\gamma_4,e_3,e_4)$ & $\mathbb{CP}^2\#\overline{\mathbb{CP}^2}$ \\
\hline
\end{tabular}
}
\end{center}

\vspace{.3em}
\noindent
In particular, we have obtained two genus-$3$ SPWFs on $S^2\times S^2$ which is derived from the following two Williams diagrams: the diagram $(d_0, d_1, d_2, d_3, d_4, d_5, d_4, d_3, d_2, d_1)$ in Example \ref{exmp_trivialfibration}, and the diagram $(e_1,\gamma_1,e_1,e_2,e_3,\gamma_4,e_3,\gamma_4,e_3,e_4)$ as above. 
The SPWF corresponding to the former diagram is homotopic to the projection $p_1:S^2\times S^2\rightarrow S^2$ onto the first projection. 
Indeed, this SPWF was constructed by applying birth and flip and slip to $p_1$. 
On the other hand, it is easy to prove (by Kirby calculus, for example) that a regular fiber of the SPWF corresponding to the latter diagram is null-homologous in $S^2\times S^2$. 
Thus, two genus-$3$ SPWFs above are not homotopic. 
In the same way, we can prove that two SPWFs on $S^1\times S^3\# S^2\times S^2$ derived from the following two diagrams are not homotopic: the diagram $(d_0,d_1,d_2,d_3,d_4,d_3,d_2,d_1)$ which is obtained by applying flip and slip to the step fibration twice (see Remark \ref{rem_stepfibration}), and the diagram $(e_1,\gamma_1,e_1,e_2,e_3,\gamma_3,e_3,e_4)$ as above. 

\begin{figure}[htbp]
\begin{center}
\includegraphics[width=60mm]{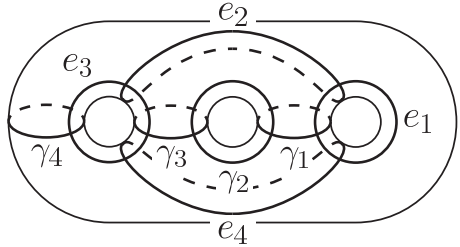}
\end{center}
\caption{simple closed curves in $\Sigma_3$. }
\label{sccgenus3fibrations}
\end{figure}
\end{exmp}

\end{document}